%
%
%


\documentclass{amsart}

\usepackage{epsf,amscd,amssymb}
 \usepackage{graphicx,amscd,amssymb,bbm}
\usepackage{graphics}
\usepackage{rotating}
\usepackage{url}

\usepackage[backref,bookmarks=true,colorlinks=true,pdfstartview=FitV,linkcolor=blue, citecolor=red,urlcolor=green]{hyperref}  

\newtheorem{theorem}{Theorem}[section]
\newtheorem{lemma}[theorem]{Lemma}
\newtheorem{corollary}[theorem]{Corollary}
\newtheorem{proposition}[theorem]{Proposition}

\theoremstyle{definition}
\newtheorem{definition}[theorem]{Definition}
\newtheorem{example}[theorem]{Example}

\theoremstyle{remark}
\newtheorem{remark}[theorem]{Remark}

\numberwithin{equation}{section}



\setcounter{tocdepth}{3} 

\newcommand\bR{{\mathbb{R}}}

\newcommand\bZ{{\mathbb Z}}

\newcommand\GL{{\rm GL}}
\newcommand\SL{{\rm SL}}
\newcommand\PGL{{\rm PGL}}

\newcommand\PO{{\rm PO}}

\newcommand\Hom{{\rm Hom}}

\newcommand\dev{{\bf dev}}
\newcommand\SI{{\mathbb{S}}}

\newcommand\Bd{{\rm bd}}
\newcommand\clo{{\rm Cl}}
\newcommand\bdd{{\bf d}}

\newcommand\ra{\rightarrow}

\newcommand\emp{\emptyset}
\newcommand\eps{\epsilon}

\newcommand\Idd{{\rm I}}

\newcommand\rpn{\mathbbm{RP}^n}
\newcommand\rpnn{\mathbbm{RP}}

\newcommand\rep{\mathrm{rep}}

\newcommand\Def{\mathrm{Def}}
\newcommand\CDef{\mathrm{CDef}}
\newcommand\SDef{\mathrm{SDef}}
\newcommand\hol{\mathrm{hol}}
\newcommand\torb{\widetilde{\mathcal{O}}}
\newcommand\orb{\mathcal{O}}
\newcommand\Ag{{\mathrm{Ag}}}
\newcommand\Pgl{{\mathrm{PGL}}(n+1, \bR)}
\newcommand\SLpm{{{\mathrm{SL}}_{\pm}(n+1, \bR)}}

\newcommand\cR{{\mathcal{R}}}
\newcommand\cT{{\mathcal{T}}}
\newcommand\rrp{{\rm )}} 
\newcommand\rlp{{\rm (}}

\newcommand\bN{{\mathbbm N}}
\newcommand\bP{{\mathbbm P}}

\begin{document}

\title[The convex real projective manifolds and orbifolds with ends]{The convex real projective orbifolds with
radial or totally geodesic ends: a survey of some partial results}

\author{Suhyoung Choi} 
\address{Department of Mathematical Sciences \\ KAIST \\
Daejeon 305-701, South Korea
         }
         
\email{schoi@math.kaist.ac.kr}      
\thanks{This work was supported by the National Research Foundation of Korea(NRF) grant funded by the Korea government(MEST) 
(No. 2013R1A1A2056698).}


\subjclass[2010]{Primary 57M50; Secondary 53A20, 53C10, 20C99, 20F65}
\keywords{geometric structures, real projective structures, relatively hyperbolic groups, $\PGL(n, \bR)$, $\SL(n, \bR)$, character of groups}
\date{\today}



\begin{abstract}
A real projective orbifold has a {\em radial end} if a neighborhood of the end is foliated by projective geodesics that develop into
geodesics ending at a common point.
It has a {\em totally geodesic end} if the end can be completed to have  the totally geodesic boundary. 

We will prove a homeomorphism 
between the deformation space of convex real projective structures on an orbifold $\orb$ with radial or totally geodesic ends 
with various conditions with the union of open subspaces of strata of  
 the subset 
\[ \Hom_{\mathcal E}(\pi_{1}(\orb), \Pgl)/\Pgl  \]
of the $\Pgl$-character variety for $\pi_{1}(\orb)$ 
given by corresponding end conditions for holonomy representations.



Lastly, we will talk about the openness and closedness of the properly (resp. strictly) convex real projective structures on a class of orbifold with
\hyperlink{term-addg}{generalized admissible ends}.

\end{abstract}

\maketitle

\tableofcontents
\listoffigures







\section{Introduction} 

\subsection{Preliminary} 
Let $H$ be a closed upper-half space $\{x \in \bR^{n}| x_{n}\geq 0\}$ with 
boundary $\partial H =\{x \in \bR^{n}| x_{n} = 0\}$. 
An \hypertarget{term-orb}{{\em orbifold}} $\orb$ is a second countable Hausdorff space 
where each point $x$ has a neighborhood with a chart $(U, G, \phi)$ consisting of 
\begin{itemize}
\item a finite group $G$ acting on $U$ an open subset of $H$, 
\item $\phi: U \ra \phi(U)$ inducing a homeomorphism $U/G \ra \phi(U)$ to a neighborhood 
$\phi(U)$ of $x$. 
\end{itemize} 
Also, these charts are compatible in some obvious sense
as explained by Satake and Thurston.
Such a triple $(U, G, \phi)$ is called a model of a neighborhood of $\orb$. 
The \hypertarget{term-bdorb}{{\em orbifold boundary}} $\partial \orb$ is the set of points with only models of form $(U, G, \phi)$ 
where $U$ meets $\partial H$.  A \hypertarget{term-corb}{{\em closed orbifold}} is a compact orbifold with empty boundary. 
The orbifolds in this paper are  
the quotient space of a manifold under the action of a finite group. 
(See Chapter 13 of Thurston  \cite{Thnote} or more modern Moerdijk \cite{moer}.)

We will study properly convex real projective structures on such orbifolds. 
A properly convex real projective orbifold is 
the quotient $\Omega/\Gamma$ of a properly convex domain $\Omega$
in an affine space in $\mathbbm{RP}^{n}$ by a group $\Gamma$, 
$\Gamma \subset \Pgl$, of projective automorphisms acting on $\Omega$
properly but maybe not freely. 
Let $\pi_{1}(\orb)$ denote the orbifold-fundamental group of $\orb= \Omega/\Gamma$, 
which is isomorphic to $\Gamma$. 
Given an orbifold $\orb$, a  {\em properly convex real projective structure} on $\orb$
is a diffeomorphism $f:\orb \ra \Omega/\Gamma$ for a properly convex 
real projective orbifold of form $\Omega/\Gamma$ as above. 
Finite volume complete hyperbolic manifolds are examples since we can use 
the Klein model and identify $\Omega$ as the model space
where $\Gamma$ is the hyperbolic isometry group, which acts projectively on $\Omega$. 
(See Example \ref{exmp:hyp}.) 

For closed $n$-dimensional orbifolds, these structures are somewhat well studied by Benoist \cite{Ben0}
generalizing the previous work for surfaces by Goldman \cite{Gconv} and Choi-Goldman \cite{CG}. 
The work of Cooper, Long, and Thistlethwaite \cite{CLT1} and \cite{CLT2}
showed the existence of deformations for some hyperbolic $3$-orbifolds that are explicitly computable. 
Currently, there seems to be more interest in this field due to these and other developments. 
(For a recent survey, see \cite{CLM}.)

A {\em strongly tame} orbifold is an orbifold that has a compact suborbifold whose complement is 
homeomorphic to a disjoint union of closed $(n-1)$-orbifolds times intervals. 
The theory is mostly applicable to strongly tame orbifolds that are not manifolds, and
is most natural in this setting. 
In fact, the theory is mostly adopted for Coxeter orbifolds, i.e., orbifolds based on
convex polytopes with faces silvered, and also, orbifolds that are doubles of these.
(See Section \ref{sub:rem}.)

One central example to keep in mind is the tetrahedron with silvered sides and edge orders equal to $3$.
This orbifold admits a complete hyperbolic structure. Also, it admits deformations to convex 
real projective orbifolds. The deformation space of real projective structures is homeomorphic to a four cell. 
(See Choi \cite{poly} and Marquis \cite{ludo}.) We can also take the double of this orbifold.
The deformation space is $5$-dimensional 
and can be explicitly computed in \cite{schoimath}. 
(See Chapter 7 of \cite{convMa}.)
Except for ones based on tetrahedra, 
complete hyperbolic Coxeter $3$-orbifolds with all edge orders $3$ have at least 
six dimensional deformation spaces by Theorem 1 of  Choi-Hodgson-Lee \cite{CHL}. 

Another well-known prior example is due to Tillmann: This is a complete hyperbolic orbifold 
on a complement of two-points $p, q$ in the $3$-sphere where the singularities are 
two simple arcs connecting $p$ to itself and $q$ to itself forming a link of index $1$
and another simple arc connecting $p$ and $q$.  These arcs have $\bZ_{3}$ as the local group. 
Heard, Hodgson, Martelli, and Petronio \cite{heard} labelled this orbifold $2h\underbar{\,\,}1\underbar{\,\,}1$. 
The dimension is computed to be $2$  
by Porti and Tillman \cite{PTp}. 
(See Figure \ref{fig:exmp1}.)

 For all these examples, we know that some horospherical ends deform to lens-type radial ones and vice versa. 
 We can also obtain totally geodesic ends by ``cutting off'' some radial ends. 
 We call the phenomenon ``cusp opening''. 
Benoist \cite{Ben4} first found such phenomena for a Coxeter $3$-orbifold. 
We also had the many numerical and theoretical 
results for above Coxeter $3$-orbifolds which we plan to write more explicitly in a later paper \cite{CGLM}. 
Also, Greene  \cite{Greene} found many such examples using explicit computations. 
Recently, Ballas, Danciger, and Lee \cite{BDL}  found these phenomena using cohomology arguments for complete
finite-volume hyperbolic $3$-manifolds as explained in their MSRI talks in 2015. 
Some of the computations are available from the authors. 


\begin{figure}[h]

\centerline{\includegraphics[width=8cm,trim={0 {1cm} 0 {1cm}},clip]{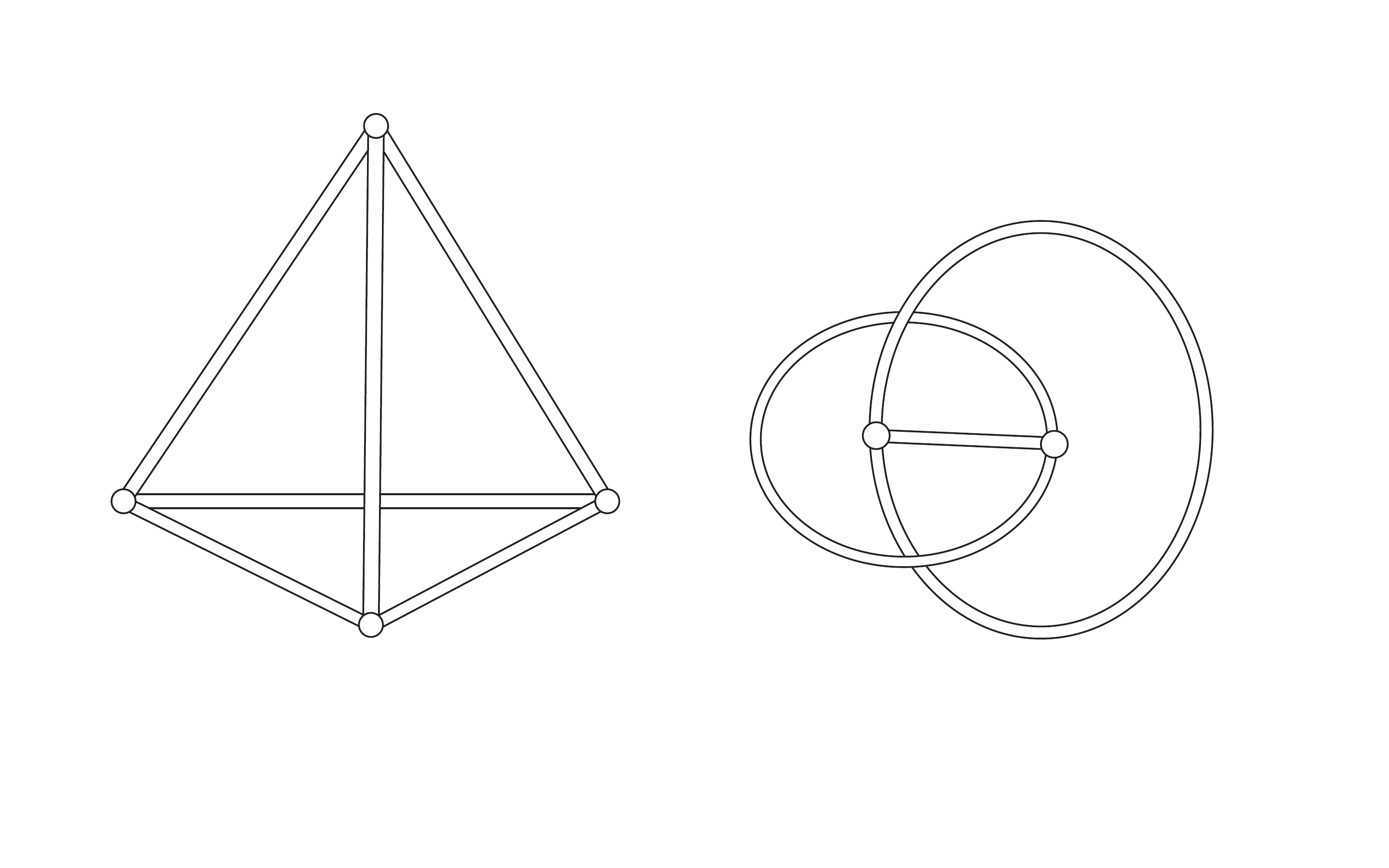}}

\caption{The singularities of the our orbifolds, the double of a tetrahedral reflection orbifold with orders $3$ and 
Tillmann's orbifold in the $3$-spheres. The white dots indicate the points removed. The edges are all of order $3$.}
\label{fig:exmp1}
\end{figure}



\subsection{Main results} \label{sub:mainresults}
We concentrate on studying the ends that are well-behaved, i.e., 
ones that are foliated by lines or are totally geodesic. 
In this setting we wish to study the deformation spaces of the convex real projective 
structures on orbifolds with some boundary conditions 
using the character varieties. Our main aim is 
\begin{itemize}
\item to identify
the deformation space of 
convex real projective structures on an orbifold $\orb$ with certain boundary conditions
with the union of some open subsets of strata of $\Hom(\pi_{1}(\orb), \PGL(n+1, \bR))/\PGL(n+1,\bR)$ 
defined by conditions corresponding to the boundary conditions. 
\end{itemize} 
This is an example of the so-called Ehresmann-Thurston-Weil principle \cite{Weil}.
(See Canary-Epstein-Green \cite{Canary}, Goldman \cite{Goldman3}, Lok \cite{Lok}, 
Bergeron-Gelander \cite{BG04}, and Choi \cite{dgorb}.)
 The precise statements are given in \hyperlink{thm-BC}{Theorems \ref{thm-B} and \ref{thm-C}}.

See Definition \ref{defn:IE} for the condition (IE) and (NA). We use the notion of {\em strict convexity with respect to ends}
as defined in Definition \ref{defn:strict}.  
Our main result is the following as a corollary of Theorem \ref{thm-closed1}: 

\begin{corollary}\label{cor-closed1} 
Let $\mathcal{O}$ be a noncompact strongly tame \hyperlink{term-sspc}{SPC $n$-orbifold} 
with \hyperlink{term-addg}{generalized admissible ends}
and satisfies {\rm (IE)} and {\rm (NA)}. 
Assume $\partial \orb =\emp$, and 
that the nilpotent normal subgroups of every finite-index subgroup of $\pi_1(\mathcal{O})$ are trivial. 
Then 
\hyperlink{term-hol}{$\hol$} 
maps the deformation space $\CDef_{{\mathcal E}, \mathrm{u, ce}}(\mathcal{O})$ of 
\hyperlink{term-spc}{SPC-structures} on $\mathcal{O}$ 
homeomorphically to a union of components of 
\[\rep_{{\mathcal E}, \mathrm{u, ce}}^{s}(\pi_1(\mathcal{O}), \PGL(n+1, \bR)).\]  
The same can be said for $\SDef_{{\mathcal E}, \mathrm{u, ce}}(\mathcal{O})$.
\end{corollary}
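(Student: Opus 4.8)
The plan is to obtain the corollary by specializing Theorem \ref{thm-closed1} to the case $\partial\orb=\emp$ and by checking that, under the extra hypothesis on nilpotent normal subgroups, the natural target of $\hol$ is exactly the stable part $\rep^{s}_{\mathcal{E},\,\mathrm{u, ce}}(\pi_{1}(\orb),\Pgl)$. First I would use the structure theory of Benoist for discrete groups acting properly on a properly convex domain: if $\orb=\Omega/\Gamma$ is an SPC-orbifold and no finite-index subgroup of $\Gamma$ has a nontrivial nilpotent normal subgroup, then the Zariski closure of $\Gamma$ in $\Pgl$ is reductive and the centralizer of $\Gamma$ is trivial, so the holonomy of every SPC-structure in $\CDef_{\mathcal{E},\,\mathrm{u, ce}}(\orb)$ is a stable point of $\Hom(\pi_{1}(\orb),\Pgl)$ under conjugation. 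Hence $\hol$ has image in the smooth locus, the $\Pgl$-action near that image is proper with Hausdorff quotient, and a ``union of components of $\rep^{s}_{\mathcal{E},\,\mathrm{u, ce}}$'' is a well-defined target. Strong tameness guarantees a compact core with a finite family of ends, so the conditions abbreviated by $\mathcal{E}$ and $\mathrm{u, ce}$, together with (IE) and (NA) and strict convexity with respect to ends (Definitions \ref{defn:IE} and \ref{defn:strict}), cut out locally closed subsets on both the deformation side and the representation side.

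For the openness half I would invoke the Ehresmann-Thurston-Weil principle in the form of Theorems \ref{thm-B} and \ref{thm-C}: the $\mathrm{u, ce}$ conditions are open and are matched on $\CDef$ and on $\rep^{s}$, so $\hol$ restricts to a local homeomorphism $\CDef_{\mathcal{E},\,\mathrm{u, ce}}(\orb)\to\rep^{s}_{\mathcal{E},\,\mathrm{u, ce}}$. Injectivity is the uniqueness of an SPC-structure with prescribed holonomy: given two properly convex domains $\Omega_{1},\Omega_{2}$ developed with the same holonomy, a developing-map argument on the strongly tame core together with the rigidity of the radial or totally geodesic end neighborhoods pinned down by (IE) and (NA) shows that the two structures are isotopic through SPC-structures. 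Therefore $\hol(\CDef_{\mathcal{E},\,\mathrm{u, ce}}(\orb))$ is open in $\rep^{s}_{\mathcal{E},\,\mathrm{u, ce}}$.

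The closedness half is the heart of Theorem \ref{thm-closed1} and the step I expect to be the main obstacle. Given $[h_{i}]\to[h_{\infty}]$ in $\rep^{s}_{\mathcal{E},\,\mathrm{u, ce}}$ with $h_{i}=\hol(\mu_{i})$ for SPC-structures $\mu_{i}=\Omega_{i}/\Gamma_{i}$, I would lift to representations $h_{i}\to h_{\infty}$, pass to a subsequence along which $\clo(\Omega_{i})$ converges in the Hausdorff topology on closed subsets of $\rpn$ to a convex set with interior $\Omega_{\infty}$ invariant under $h_{\infty}(\pi_{1}(\orb))$, and then establish: (i) $\Omega_{\infty}$ is properly convex, not merely convex; (ii) $h_{\infty}(\pi_{1}(\orb))$ acts properly on $\Omega_{\infty}$ with quotient an SPC-orbifold diffeomorphic to $\orb$; and (iii) the ends of the quotient are again of the prescribed $\mathrm{u, ce}$ type. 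Step (i) is where all the hypotheses enter: triviality of the nilpotent normal subgroups of finite-index subgroups rules out the limit group preserving a proper projective subspace along which a properly convex $\Omega_{i}$ could degenerate to a half-space or to a lower-dimensional convex set; and the generalized admissibility of the ends, with (IE) and (NA), supplies uniform control near each end, so that the end holonomies stay in a closed class of lens-type or totally geodesic end groups of definite ``thickness'' and the cross-sections cannot collapse in the limit. Step (iii) follows by passing to the limit in the developing maps of the end neighborhoods, which preserves the radial foliation and the totally geodesic boundary. Combining the two halves, $\hol(\CDef_{\mathcal{E},\,\mathrm{u, ce}}(\orb))$ is open and closed, hence a union of components of $\rep^{s}_{\mathcal{E},\,\mathrm{u, ce}}(\pi_{1}(\orb),\Pgl)$. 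The statement for $\SDef$ follows from the same scheme, replacing ``properly convex'' by ``strictly convex with respect to the ends'' in step (i): the appearance of a nontrivial segment in $\partial\Omega_{\infty}$ not accounted for by an end would again contradict the admissibility of the ends together with the trivial-nilpotent-normal hypothesis, so strict convexity with respect to ends is preserved under these constrained limits.
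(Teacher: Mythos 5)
Your proposal is correct and follows essentially the same route as the paper: the corollary is a restatement of Theorem \ref{thm-closed1}, whose proof the paper also organizes as openness via the Ehresmann--Thurston principle (Theorems \ref{thm-A}, \ref{thm-conv}) plus closedness via geometric limits of the convex domains $\clo(\Omega_i)$, with degenerate limits (empty interior, or antipodal pairs forcing an invariant great sphere) excluded because the limit character is stable and irreducible. The only cosmetic difference is that the paper derives stability from Theorem \ref{thm-sSPC} (strong irreducibility and no invariant parabolic, then \cite{JM}), whereas you phrase the same input through Benoist's structure theory directly.
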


These terms will be defined more precisely later on in Sections \ref{subsub-char} and \ref{subsub-maintheorems}. 
Roughly speaking,   $\CDef_{{\mathcal E}}(\mathcal{O})$ (resp. $\SDef_{{\mathcal E}}(\mathcal{O})$)
is the deformation spaces of properly convex (resp. strictly properly convex)
real projective structures with conditions on ends that each end holonomy group fixes a point. 
 \[\rep_{{\mathcal E}}^s(\pi_1(\mathcal{O}), \PGL(n+1, \bR))\] is the space of characters each of whose end 
 holonomy group fixes a point.
\[\CDef_{{\mathcal E}, \mathrm{u, ce}}(\mathcal{O}) \hbox{ (resp. } \SDef_{{\mathcal E}, \mathrm{u, ce}}(\mathcal{O})) \] is 
the deformation space of properly convex (resp. strictly properly convex)
real projective structures with conditions on ends that each end has a lens-cone neighborhood or a horospherical one, and 
each end holonomy group fixes a unique point. 
 \[\rep_{{\mathcal E}, \mathrm{u, ce}}^s(\pi_1(\mathcal{O}), \PGL(n+1, \bR))\] is the space of characters each of whose end 
 holonomy group fixes a unique point and acts on a lens-cone or a horosphere. 

Our main examples satisfy this condition: 
 Suppose that a strongly tame properly convex $3$-orbifold $\orb$ with radial ends admits a finite volume 
 complete hyperbolic structure and has radial ends only and any end neighborhood contains singularities of dimension $1$
 of order $3$ or $6$, and end orbifolds have base spaces homeomorphic to disks or spheres. 
The theory simplifies by Corollary \ref{cor:hyperbolic},  i.e., each end is always of lens-type or horospherical,
so that 
\[  
\SDef_{{\mathcal E}, \mathrm{u, ce}}(\mathcal{O})= \SDef_{{\mathcal E}}(\mathcal{O}). \]
 Corollary \ref{cor-closed2} applies to these cases, and the space under $\hol$ maps homeomorphically 
to a union of components of  
\[\rep_{{\mathcal E}}^{s}(\pi_1(\mathcal{O}), \PGL(4, \bR)).\]


For a strongly tame Coxeter orbifold $\orb$ of dimension $n \geq 3$ admitting  a complete hyperbolic structure, 
$\hol$ is a homeomorphism from 
\[ \SDef_{{\mathcal E}, \mathrm{u, ce}}(\mathcal{O})  = \SDef_{{\mathcal E}}(\mathcal{O})\]
to a union of components of  
\[\rep_{{\mathcal E}}^{s}(\pi_1(\mathcal{O}), \PGL(n+1, \bR))\]
by Corollary \ref{cor-closed3}. 
For this theory, we can consider a Coxeter orbifold based on a convex polytope admitting a complete hyperbolic structure
 with all edge orders equal to $3$.  More specifically, we can consider 
 a hyperbolic ideal simplex or a hyperbolic ideal cube with such structures. 
 (See Choi-Hodgson-Lee \cite{CHL} for examples of $6$-dimensional deformations.)

 
 One question is whether we can remove the stability condition on the target character varieties. (For closed orbifolds, see Theorem 4.1 of \cite{CLM}
 essentially following from Benoist \cite{Ben3}.). 
 We plan to prove this for many interesting orbifolds, such as strongly tame orbifolds.

\subsection{Remarks} \label{sub:rem} 
We give some remarks on our results here:
The theory here is by no means exhaustive final words. 
We have a somewhat complicated theory of ends \cite{endclass}, \cite{End1}, \cite{End2}, \cite{End3},
which is used in this paper. Instead of publishing some of these and \cite{convMa}. 
We will try to refine and generalize the theory and put into a monograph \cite{newbook} in a near future. 
Our boundary condition is very restrictive  in some sense. With this it could be said that the above theory is not
so surprising.  

Ballas, Cooper, Leitner, Long,  and Tillman  have different restrictions on ends
and they are working with manifolds. 
The associated end neighborhoods have nilpotent holonomy groups. (See \cite{CLT2}, \cite{CLT3}, \cite{Leitner1}, \cite{Leitner2}, 
\cite{Leitner3}, 
and \cite{BCL}).
They are currently developing the theory of ends and the deformation theory based on this assumption.
Of course, we expect to benefit and thrive from many interactions between the theories
as they happen in multitudes of fields. 

Originally, we developed the theory for orbifolds as given in papers of Choi \cite{poly}, Choi, Hodgson, and Lee \cite{CHL}, and \cite{convMa}. 
However, the recent examples of Ballas \cite{Ballas}, \cite{Ballas2}, and Ballas, Danciger, and Lee \cite{BDL} 
can be covered using \hyperlink{term-fixings}{fixing sections}. 
Also, differently from the above work, we can allow ends with hyperbolic holonomy groups. 

\subsection{Outline} 

In Section \ref{sec-defn}, we will go over real projective structures.  
We will define end structures of convex real projective orbifolds. 
We first discuss totally geodesic ends and define \hyperlink{term-lensc}{lens condition} for these totally geodesic ends. 
Then we define radial ends and radial foliation marking for radial ends.
We define end orbifolds, and horospherical ends. 
We define the space of characters  and the deformation spaces of convex real projective structures on orbifolds with 
radial or totally geodesic ends. Finally, we discuss the local homeomorphisms between the subsets of deformation spaces and those of character varieties. 
Here, we are not yet concerned with convexity. 


In Section \ref{sec-convr}, we will discuss the known facts about the convex real projective orbifolds 
including the Vinberg duality result. 

In Section \ref{sec-endt}, we will discuss the end theory. We introduce pseudo-ends and pseudo-end neighborhoods, 
and pseudo-end fundamental groups. We introduce admissible groups and admissible ends. 
We introduce the lens-conditions for radial ends and totally geodesic ends. 

In Section \ref{sec-relhyp}, we will relate the relative hyperbolicity of the fundamental groups of 
strongly tame properly convex real projective orbifolds with the ``relative'' strict convexity of the real projective structures. 
(See Section \ref{sub-Bowditch}.) 
In Section \ref{sub-spc}, we define the stable  properly convex real projective orbifolds relative to ends. 
We will show that under mild conditions properly convex strongly tame real projective orbifolds with \hyperlink{term-addg}{generalized admissible ends}
have \hyperlink{term-st}{stable} holonomy groups.

In Section \ref{sec-clopen}, we will state our main results. 
In Section \ref{sub-open}, we will state that a holonomy homomorphism map
from a deformation space with end conditions to the character variety is a homeomorphism 
to a union of open subsets of strata of the character variety. This map is injective. 
In Section \ref{sub-closed}, we will say about the closedness.  

\subsection{Acknowledgement} 

We thank many discussions with Samuel Ballas, Daryl Cooper, Jeffrey Danciger, William Goldman,  Craig Hodgson,  Gye-Seon Lee, Darren Long, 
Joan Porti, and Stephan Tillmann. This research are supported and inspired by many difficult computer computations done by them.
We benefited from the helpful discussions with Stephan Tillman in May of 2008, and ones with
Cooper at ICERM, Brown University, in September 2013, and the ones with
Samuel Ballas, Daryl Cooper, and Darren Long at the UC Santa Barbara in June of 2015.  
We appreciate their generosity as well as their deep insights and clarifications.
We also thank the hospitality of MSRI during Spring of 2015 where some parts of this research was carried out. 

\section{Preliminary} \label{sec-defn}

\subsection{Basic definitions} 

\subsubsection{Topological notation} 
Define $\Bd A$ for a subset $A$ of $\rpn$, $n \geq 2$, (resp. in $\SI^n$) to be the {\em topological boundary} in $\rpn$ (resp. in $\SI^n$) \index{boundary!topological}
and define $\partial A$ for a manifold or orbifold $A$ to be the {\em manifold or orbifold boundary} \index{boundary!manifold}
and $A^o$ denote the manifold interior. \index{interior!manifold}
The closure $\clo(A)$ of a subset $A$ of $\rpn$ (resp. of $\SI^n$) is the topological closure in $\rpn$ (resp. in $\SI^n$). \index{$\clo(\cdot)$}

\subsubsection{The Hausdorff metric}
Recall the standard elliptic metric $\bdd$ on $\rpn$ (resp. in $\SI^n$).  \index{Hausdorff metric} \index{elliptic metric} \index{$\bdd$}
Given two sets $A$ and $B$ of $\rpn$ (resp of $\SI^n$), 
\[\bdd(A, B):=\inf \{\bdd(x, y)| x \in A, y \in B\}.\]
We can let $A$ or $B$ be points as well obviously. 


The \hypertarget{term-hmetric}{{\em Hausdorff distance}} 
between two convex subsets $K_1, K_2$ of $\rpn$ (resp. of $\SI^n$) is defined by 
\[ \bdd^H(K_1, K_2) = \inf\big\{\eps\geq 0\, \big| \,  \clo(K_1) \subset N_\eps(\clo(K_2)), \clo(K_2) \subset N_\eps(\clo(K_1))\big\}\]
where $N_\eps(A)$ is the $\eps$-$\bdd$-neighborhood of $A$ under the standard metric $\bdd$ of $\rpn$ (resp. of $\SI^n$) for $\eps >0$. 
$\bdd^H$ gives a compact Hausdorff topology on the set of all compact subsets of $\rpn$ (resp. of $\SI^n$).
(See p. 281 of \cite{Munkres}.)

We say that a sequence of sets $\{K_i\}$ \hypertarget{term-geoc}{{\em geometrically converges}} 
to a set $K$ if $\bdd^H(K_i, K) \ra 0$. \index{$\bdd^H$}
If $K$ is assumed to be closed, then the geometric limit is unique.

\begin{lemma} \label{lem:bdconv}
Suppose that a sequence $\{K_i\}$  of compact convex domains geometrically converges to 
a compact convex domain $K$ in $\rpn$ {\rm (}resp. in $\SI^n$\,{\rm).} 
Then
\begin{equation}\label{eqn:partialKi}
 \bdd^H(\partial K_i, \partial K) \ra 0.
 \end{equation}   
 \end{lemma}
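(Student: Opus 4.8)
The plan is to reduce the convergence of boundaries to the convergence of the bodies together with uniform control on the ``thickness'' of the $K_i$ near their boundaries. First I would fix a point $p$ in the interior of $K$; since $K_i \to K$ in $\bdd^H$, for large $i$ the point $p$ lies in the interior of $K_i$, and moreover there is a fixed ball $B_\delta(p)$ contained in all $K_i$ and in $K$ (here one works in an affine chart containing $K$, which exists because $K$ is a compact convex domain, hence properly convex and contained in some affine patch). The role of $p$ is to serve as a common ``radial center'': for a convex body containing $B_\delta(p)$, the boundary is a radial graph over the unit sphere of directions at $p$, i.e.\ $\partial K_i = \{\, p + \rho_i(u) u : u \in \SI^{n-1}\,\}$ in the chart, with $\rho_i$ a positive function bounded below by $\delta$ and above by the diameter of a fixed neighborhood.

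The key step is then to show that $\bdd^H(\partial K_i,\partial K)\to 0$ is equivalent to, and follows from, $\rho_i \to \rho_\infty$ pointwise (indeed uniformly, by a standard Dini-type argument since all functions are concave-like radial functions of convex bodies and uniformly bounded away from $0$ and $\infty$). For one direction: given $x \in \partial K_i$, write $x = p + \rho_i(u)u$; the ray from $p$ through $u$ meets $\partial K$ at $y = p + \rho_\infty(u)u$, and $\bdd(x,y)$ is comparable to $|\rho_i(u) - \rho_\infty(u)|$, uniformly because the radial parametrization from $p$ is bi-Lipschitz onto the boundary with constants depending only on $\delta$ and the outer bound. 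So it suffices to prove $\rho_i(u) \to \rho_\infty(u)$ uniformly in $u$. This in turn follows from $\bdd^H(K_i,K)\to 0$: if $\rho_i(u) \geq \rho_\infty(u) + \eps$ for some $u$, then the point $p + (\rho_\infty(u) + \eps/2)u$ lies in $K_i$ but at distance $\gtrsim \eps$ from $K$, contradicting $K_i \subset N_{\eps'}(K)$ for small $\eps'$; symmetrically if $\rho_i(u) \leq \rho_\infty(u) - \eps$, using a supporting hyperplane of $K$ at $p + \rho_\infty(u)u$ and convexity of $K_i$ to find a point of $K$ far from $K_i$. Compactness of $\SI^{n-1}$ upgrades the pointwise statement to a uniform one, or one argues uniformly from the start by extracting the worst direction.

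The main obstacle I anticipate is purely the bookkeeping of constants in the passage between the ambient metric $\bdd$ on $\rpn$ (or $\SI^n$) and the Euclidean metric of the affine chart: the radial parametrization and its Lipschitz bounds live naturally in the affine chart, so one must check that $\bdd$-distances and chart-distances are comparable on the relevant compact region, with constants independent of $i$ — this is fine because everything happens inside one fixed compact subset of one affine patch (shrink if necessary so that all $K_i$ for large $i$ lie in it, which is legitimate since $K_i \to K$). Once that comparability is recorded, the argument above is routine. A cleaner alternative, which I would mention as a remark, is to invoke the known fact (e.g.\ from convex geometry in $\bR^n$, after transporting to the chart) that on the space of convex bodies containing a fixed ball, the maps $K \mapsto \partial K$ and $K\mapsto \clo K$ induce the same topology, so that $\bdd^H(K_i,K)\to 0 \iff \bdd^H(\partial K_i,\partial K)\to 0$; the lemma is then immediate. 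Either way the essential input is the uniform interior ball $B_\delta(p) \subset K_i$, which is what prevents the boundaries from ``collapsing'' without the bodies doing so.
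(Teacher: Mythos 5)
The paper states Lemma \ref{lem:bdconv} without giving a proof (the survey defers such details to \cite{convMa}), so there is no in-text argument to compare against; judged on its own, your plan is the standard radial-function argument and it is correct. A few small points. (1) The quantitative claims you leave as ``comparable'' are all supplied by the one supporting-hyperplane estimate you sketch: if $B_\delta(p)\subset K_i, K\subset B_R(p)$ in the chart and $H$ supports $K$ at $z=p+\rho_\infty(u)u$ with outward unit normal $\nu$, then $\langle z-p,\nu\rangle\ge\delta$ forces the angle between $u$ and $\nu$ to have cosine at least $\delta/R$, so a point $s$ beyond $z$ along the ray is at distance at least $s\delta/R$ from $K$. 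This gives $\sup_u|\rho_i(u)-\rho_\infty(u)|\le (R/\delta)\,\bdd^H(K_i,K)$ directly (up to the chart-versus-elliptic metric constants you already record), so the uniformity is automatic and the Dini-type step should be dropped --- the radial function of a convex body is not concave and the convergence need not be monotone, so that hedge would not work as stated anyway. (2) In the direction $\rho_i(u)\le\rho_\infty(u)-\eps$ the hyperplane should support $K_i$ at $p+\rho_i(u)u$ (the point $p+\rho_\infty(u)u$ of $K$ then lies far from $K_i$); your sketch says ``supporting hyperplane of $K$,'' which is just a slip. (3) Your argument presupposes $K^o\ne\emp$, which is needed to choose $p$ and the uniform inner ball; this is implicit in the word ``domain'' and in the lemma's only application \eqref{eqn:HiHa}, but if one wanted the degenerate case as well one would add the separate observation that the inradius of $K_i$ then tends to $0$, so every point of $K_i$ is close to $\partial K_i$ and the conclusion follows from $K_i\to K$ alone.
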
 


\subsubsection{Real projective structures} \label{sub:prel}

Given a vector space $V$, we let $\bP(V)$ denote the space obtained by taking the quotient space of \index{$\bP(\cdot)$}
$V-\{O\}$ under the equivalence relation
\[v\sim w \hbox{ for } v, w \in \bR^{n+1} -\{O\}  \hbox{ iff } v = s w, \hbox{ for } s \in \bR -\{0\}.\] 
We let $[v]$ denote the equivalence class of $v \in V -\{O\}$. 
Recall that the projective linear group $\PGL(n+1, \bR)$ acts on $\rpn$, i.e.,
$\bP(\bR^{n+1})$, in a standard manner. 

Let $\mathcal{O}$ be a noncompact strongly tame $n$-orbifold where the orbifold boundary is not necessarily empty. 
\begin{itemize} 
\item A {\em real projective orbifold} is an orbifold with a geometric structure modelled on $(\rpn, \PGL(n+1, \bR))$. \index{real projective orbifold} 
(See Thurston \cite{Thbook}, \cite{dgorb} and Chapter 6 of \cite{msj}.)
\item $\orb$ has a universal cover $\tilde{\mathcal{O}}$ where the deck transformation group $\pi_1({\mathcal{O}})$ acts on. 
\item The underlying space of $\mathcal{O}$ is homeomorphic to the quotient space $\tilde{\mathcal{O}}/\pi_1({\mathcal{O}})$.
\item A real projective structure on $\mathcal{O}$ gives us a so-called development pair  \index{real projective structure} 
$(\dev, h)$ where 
\begin{itemize} 
\item $\dev:\tilde{\mathcal{O}} \ra \rpn$ is an immersion, called the {\em developing map}, 
\item and $h:\pi_1(\mathcal{O}) \ra \PGL(n+1, \bR)$ is a homomorphism, called a {\em holonomy homomorphism}, 
satisfying 
\[\dev\circ \gamma= h(\gamma)\circ \dev \hbox{ for } \gamma \in \pi_1(\mathcal{O}).\] 
\end{itemize} 
\end{itemize} 
Let $\bR^{n+1 \ast}$ denote the dual of $\bR^{n+1}$.  \index{$\bR^{n+1 \ast}$}
Let $\rpnn^{n\ast}$ denote the dual projective space $\bP(\bR^{n+1 \ast})$.  \index{$\rpnn^{n \ast}$}
$\PGL(n+1, \bR)$ acts on $\rpnn^{n\ast}$ by taking the inverse of the dual transformation. 
Then a representation $h:\pi_{1}(\orb) \ra \PGL(n+1, \bR)$ has a dual representation 
$h^*: \pi_{1}(\orb) \ra \PGL(n+1, \bR) $ sending 
elements of $\pi_1(\orb)$ to the inverse of the dual transformation of $\bR^{n+1 \ast}$.


The complement of a codimension-one subspace of $\rpn$ can be identified with an affine space 
$\bR^n$ where the geodesics are preserved. 
We call the complement an \hypertarget{term-affs}{{\em affine subspace}}. 
A {\em convex domain} in $\rpn$ is a convex subset of an affine subspace.  \index{convex domain}
A {\em properly convex domain} in $\rpn$ is a convex domain contained in a precompact subset of an affine subspace.  \index{convex domain! properly convex} 

 
A {\em convex real projective orbifold} is a real projective orbifold 
projectively diffeomorphic to
the quotient $\Omega/\Gamma$ where $\Omega$ is a convex domain  in an affine subspace of $\rpn$
and $\Gamma$ is a discrete group of projective automorphisms of $\Omega$ acting properly. 
If an open orbifold has a convex real projective structure, it is covered by a convex domain $\Omega$ in $\rpn$. 
Equivalently, this means that the image of the developing map $\dev(\tilde{\mathcal{O}})$ for the universal cover $\tilde{\mathcal{O}}$ of 
$\mathcal{O}$ is a convex domain for the developing map $\dev$ with associated holonomy homomorphism $h$.  
Here we may assume $\dev(\tilde{\mathcal{O}})=\Omega$, and 
$\mathcal{O}$ is projectively diffeomorphic 
to $\dev(\tilde{\mathcal{O}})/h(\pi_1(\mathcal{O}))$. In our discussions, since $\dev$ is an imbedding
and so is $h$, 
$\tilde{\mathcal{O}}$ will be regarded as an open domain in $\rpn$ and 
$\pi_{1}(\mathcal{O})$ as a subgroup of $\PGL(n+1, \bR)$ in such cases. 

\begin{remark}\label{rem:SL}
Given a vector space $V$, we denote by $\SI(V)$ the quotient space of
\[ (V - \{O\})/\sim \hbox{ where } v \sim w \hbox{ iff } v = s w \hbox{ for } s > 0. \]
We will represent each element of $\PGL(n+1, \bR)$ by a matrix of determinant $\pm 1$; 
i.e., $\PGL(n+1, \bR) = \SLpm/\langle\pm \Idd \rangle$. 
Recall the covering map $\SI^{n}= \SI(\bR^{n+1}) \ra \rpn$. 
For each $g \in \PGL(n+1, \bR)$, 
there is a unique lift in $\SLpm$ preserving each component of the inverse image of $\dev(\torb)$
under $\SI^{n} \ra \rpn$.  We will use this representative. 
\end{remark}

\subsubsection{End structures} 

A {\em strongly tame} $n$-orbifold is one where the complement of a compact set is diffeomorphic 
to a union of $(n-1)$-dimensional orbifolds times intervals. 
Of course it can be compact. 

Let $\orb$ be a strongly tame $n$-orbifold.
Each end has a neighborhood diffeomorphic to a closed $(n-1)$-orbifold times an interval. 
The {\em fundamental group} of an end is the fundamental group of such an end neighborhood. 
It is independent of the choice of the end neighborhood by Proposition \ref{prop:endf}. 
\index{end orbifold} \index{end fundamental group}

An end $E$ of a real projective orbifold $\orb$ is \hypertarget{term-Tend}{{\em totally geodesic} or {\em of type T}} 
if  the following hold: \index{end!totally geodesic} \index{end!type-T}
\begin{itemize}
\item The end has an end neighborhood homeomorphic to a closed connected $(n-1)$-dimensional orbifold $B$ times a half-open interval $(0, 1]$.
\item  $B$ completes to a compact orbifold $U$ diffeomorphic to $B \times [0, 1]$ in an ambient real projective 
orbifold.
\item The subset of $U$ corresponding to $B \times \{0\}$ is the added boundary component.
\item Each point of the added boundary component has a neighborhood projectively diffeomorphic to 
the quotient orbifold of an open set $V$ in an affine half-space $P$ so that $V \cap \partial P \ne \emp$
by a projective action of a finite group. 
\end{itemize} 
The completion is called a {\em compactified end neighborhood} of the end $E$. 
The boundary component is called the \hypertarget{term-idbd}{{\em ideal boundary}} 
{\em component} of \index{boundary!ideal}
the end. Such ideal boundary components may not be uniquely determined
as there are two projectively nonequivalent ways to add boundary components of 
elementary annuli (see Section 1.4 of \cite{cdcr2}). \index{elementary annulus} 
Two compactified end neighborhoods of an end are {\em equivalent} if they contain a common compactified end neighborhood.

We also define as follows: 
\begin{itemize}
\item The equivalence class of the chosen compactified end neighborhood is called a \hypertarget{term-marking}{{\em marking}} 
of the totally geodesic end. 
\item We will also call the ideal boundary the \hypertarget{term-endo}{{\em end orbifold}} of the end.
\end{itemize} 

\begin{definition}\label{defn-lens}
A \hypertarget{term-lens}{{\em lens}} is a properly convex domain $L$ so that $\partial L$ is a union of two smooth strictly convex open disks. \index{lens}
A properly convex domain
$K$ is a \hypertarget{term-glens}{{\em generalized lens}} if $\partial K$ is a union of two open disks one of which is strictly convex and smooth
and the other is allowed to be just a topological disk.
A {\em lens-orbifold}  is a compact quotient orbifold of a lens by a properly discontinuous action of a projective group $\Gamma$.  \index{lens!orbifold} 
Thus, for two boundary components $A$ and $B$, $A/\Gamma$ and $B/\Gamma$ are homotopy equivalent to $L/\Gamma$
by the obvious inclusion maps.
\end{definition}

\begin{description} 
\item[(Lens condition)] \hypertarget{term-lensc}{The ideal boundary} is realized as a totally geodesic suborbifold in the interior of a lens-orbifold 
in some ambient real projective orbifold of $\orb$. 
\index{end!lens condition} 
\end{description}
If the lens condition is satisfied for an T-end, 
we will call it the \hypertarget{term-lensT}{{\em T-end of lens-type}}. 

Let $\torb$ denote the universal cover of $\orb$ with the developing map $\dev$. 
An end  $E$ of a real projective orbifold is \hypertarget{term-Rend}{{\em radial}} or {\em of type R} if 
the following hold: 
\begin{itemize}
\item The end has an end neighborhood $U$ foliated by properly imbedded projective geodesics. \index{end!radial} \index{end!type-R}
\item Choose a map $f: \bR \times [0, 1] \ra \orb$ so that $f|\bR\times \{t\}$ for each $t$ is a geodesic leaf of 
such a foliation of $U$.
Then $f$ lifts to $\tilde f:\bR \times [0,1] \ra \torb$ where $\dev \circ \tilde f| \bR \times \{t\}$ for each $t, t \in [0, 1]$, 
maps to a geodesic in $\rpn$ ending at a point of concurrency common for $t$. 
\end{itemize} 
The foliation  is called a {\em radial foliation} and leaves {\em radial lines} of $E$. 
Two such radial foliations $\mathcal F_{1}$ and $\mathcal F_{2}$ 
of radial end neighborhoods of an end are {\em equivalent} if the restrictions of $\mathcal{F}_{1}$ and $\mathcal{F}_{2}$ 
in an end neighborhood agree. 
A {\em radial foliation marking} is an equivalence class of radial foliations. \index{end!radial foliation marking}
The marking will give us an ideal boundary component on the end. 

\begin{definition}
A {\em real projective orbifold with radial or totally geodesic ends} is a strongly tame 
orbifold with a real projective structure where each end is an R-end or a T-end with a marking given for each.
\end{definition}

Let $\mathbbm{RP}^{n-1}_{x}$ denote the space of concurrent lines to a point $x$
where $\mathbbm{RP}^{n-1}_{x}$ is projectively diffeomorphic to $\rpnn^{n-1}$. 
The real projective transformations fixing $x$ induce real projective transformations of $\rpnn^{n-1}_{x}$. 
Radial lines in an R-end neighborhood are {\em equivalent} if they agree outside 
a compact subset. 
The space of equivalent classes of 
radial lines in an R-end neighborhood is an $(n-1)$-orbifold by the properness of the radial lines. 
The {\em end orbifold} associated with an R-end is defined as the equivalence space of radial lines in $\orb$. 
The equivalence space of radial lines in an R-end has the local structure of $\rpnn^{n-1}$ since we can lift a local 
neighborhood to $\torb$, and these radial lines lift to lines developing into concurrent lines.  
The end orbifold has a unique induced real projective structure of one dimension lower. 



\begin{example}\label{exmp:hyp}
Let $\bR^{n+1}$ have standard coordinates $x_0, x_1, \dots, x_n$, and 
let $B$ be the subset in $\rpnn^n$ corresponding to the cone given by 
\[x_0 > \sqrt{x_1^2 + \cdots + x_n^2}.\] 
The Klein model gives a hyperbolic space as $B \subset \rpnn^n$ with 
the isometry group $\PO(1, n)$, a subgroup of $\Pgl$ acting on $B$. 
Thus, a complete hyperbolic orbifold is projectively diffeomorphic to 
a real projective orbifold of $B/\Gamma$ for 
$\Gamma$ in $\PO(1, n)$. 
The interior of a finite-volume hyperbolic $n$-orbifold with rank $(n-1)$ horospherical ends and
totally geodesic boundary forms an example of a properly convex strongly tame real projective orbifold 
with radial or totally geodesic ends. 
For horospherical ends, the end orbifolds have Euclidean structures. 
(Also, we could allow hyperideal ends by attaching radial ends. Section 3.1.1 in \cite{endclass}.)
\end{example} 

\begin{example} \label{exmp-endv}
For examples, if the end orbifold of an R-end $E$ is a $2$-orbifold based on a sphere with
three singularities of order $3$, then a line of singularity is a leaf of a radial foliation. 
End orbifolds of Tillman's orbifold and the the double of a tetrahedral reflection orbifold
are examples. 
A double orbifold of a cube with edges having orders $3$ only has eight such end orbifolds.
(See Proposition 4.6 of \cite{End1} and their deformations are computed in \cite{CHL}.
Also, see Ryan Greene \cite{Greene} for the theory.)
\end{example}

\subsubsection{Horospherical ends}

An {\em ellipsoid} in $\rpnn^n=\bP(\bR^{n+1})$ (resp. in $\SI^n=\SI(\bR^{n+1})$)  \index{ellipsoid}
is the projection $C -\{O\}$ of the null cone 
\[C:=\{x \in \bR^{n+1}| B(x, x)=0\}\] 
for a nondegenerate symmetric
bilinear form $B: \bR^{n+1} \times \bR^{n+1} \ra \bR$. Ellipsoids are always equivalent by projective automorphisms of $\rpn$.
An {\em ellipsoid ball} is the closed contractible domain in an affine subspace $A$ of 
$\rpnn^n$ (resp. $\SI^n$)  bounded by an ellipsoid contained in $A$. 
A {\em horoball} is an ellipsoid ball with a point $p$ of the boundary removed. \index{horoball}
An ellipsoid with a point $p$ on it removed is called a {\em horosphere}. The {\em vertex} of \index{horosphere}
the horosphere or the horoball is defined as $p$.

Let $U$ be a horoball with a vertex $p$ in the boundary of $B$. 
A real projective orbifold that is real projectively diffeomorphic to an orbifold
$U/\Gamma_p$ for a discrete subgroup $\Gamma_p \subset \PO(1, n)$ fixing 
a point $p \in \Bd B$ is called a {\em horoball orbifold}. \index{horoball!orbifold}
A \hypertarget{term-horo}{{\em horospherical end}} is an end with an end neighborhood that is such an orbifold. 

\subsubsection{Deformation spaces and the space of holonomy homomorphisms}  \label{subsub-defspace}
An {\em isotopy} $i: \orb \ra \orb$ is a self-diffeomorphism so that  \index{isotopy}
there exists a smooth orbifold map $J: \orb \times [0, 1] \ra \orb$,  
so that 
\[i_{t}:\orb \ra \orb \hbox{ given by } i_{t}(x) = J(x, t)\] 
are self-diffeomorphisms for $t \in [0,1]$
and $i=i_1, i_0 = \Idd_{\orb}$. 
We will extend this notion strongly. 
\begin{itemize}
\item Two real projective structures $\mu_0$ and $\mu_1$ on $\orb$ with R-ends or T-ends with end markings are {\em isotopic} 
if there is an isotopy $i$ on $\mathcal{O}$ so that $i^*(\mu_0)=\mu_1$ where $i^*(\mu_0)$ is the induced structure from $\mu_0$ by $i$
where we require for each $t$
\begin{itemize}
\item $i_{t\ast}(\mu_{0})$ has a radial end structure for each radial end, 
\item $i_{t}$ sends the radial end foliation for $\mu_0$ from an R-end neighborhood to the radial end foliation for real projective 
structure $\mu_t = i_{t\ast}(\mu_{0})$ with corresponding R-end neighborhoods, 
\item $i_t$ extends to diffeomorphisms of 
the compactifications of $\orb$ using the radial foliations 
and the totally geodesic ideal boundary components for $\mu_0$ and $\mu_t$.  
\end{itemize}
\end{itemize} 
We define 
\hypertarget{term-Def}{$\Def_{\mathcal E}(\mathcal{O})$} as the deformation space of real projective structures on $\mathcal{O}$ with end marks; more precisely, 
this is the quotient space of the real projective structures on $\mathcal{O}$ satisfying the above conditions for
ends of type R and T 
under the isotopy equivalence relations.
We put on $\mathcal{O}$ a radial foliation on each end neighborhood of type R and 
attach an ideal boundary component for each end neighborhood of type T
to obtain a new compactified orbifold $\overline{\mathcal{O}}$.
We introduce the equivalence relation based on isotopies and end neighborhood 
structures and ideal boundary components. 
We may assume that the developing maps extend to the smooth maps of the universal cover $\widehat{\mathcal{O}}$ of $\overline{\mathcal{O}}$.  
The topology of such a space is  defined by the compact open 
$C^2$-topology for the space of developing maps $\dev| \widehat{\mathcal{O}}$.
(See \cite{dgorb}, \cite{Canary} and \cite{Goldman3} for more details. )



\subsubsection{The end restrictions} 
To discuss the deformation spaces, we introduce the following notions. 
The end will be either assigned an \hypertarget{term-cRend}{{\em $\cR$-type}} or a \hypertarget{term-cTend}{{\em $\cT$-type}}. \index{$\cR$-type} \index{$\cT$-type}
\begin{itemize} 
\item An $\cR$-type end is required to be radial. 
\item A $\cT$-type end is required to have totally geodesic properly convex ideal boundary 
components of lens-type or be \hyperlink{term-horo}{horospherical}. 
\end{itemize} 
A strongly tame orbifold will always have such an assignment in this paper, 
and finite-covering maps will always respect the types. 
We will fix the types for ends of our orbifolds in consideration. 


\subsubsection{Character spaces of relevance} \label{subsub-char}

Since $\orb$ is strongly tame, the fundamental group $\pi_{1}(\orb)$ is finitely generated. 
Let $\{g_1, \dots, g_m\}$ be a set of generators of $\pi_1(\orb)$. 
As usual $\Hom(\pi_1(\mathcal{O}), G)$ for a Lie group $G$ has an {\em algebraic topology} as a subspace 
of $G^m$. This topology is given by the notion of {\em algebraic convergence}
\[\{h_i\} \ra h \hbox{ if } h_i(g_j) \ra h(g_j) \in G \hbox{ for each } j, j=1, \dots, m.\] 
A conjugacy class of representation is called a {\em character} in this paper. 

The {\em $\PGL(n+1, \bR)$-character variety} $\rep(\pi_1(\mathcal{O}), \PGL(n+1,\bR))$ is the quotient space of \index{character variety}
the homomorphism space \[\Hom(\pi_1(\mathcal{O}), \PGL(n+1,\bR))\] where $\PGL(n+1,\bR)$ acts by conjugation
\[h(\cdot) \mapsto g h(\cdot) g^{-1} \hbox{ for } g \in \PGL(n+1,\bR).\]
Similarly, we define 
\[ \rep(\pi_1(\mathcal{O}), \SLpm):= \Hom(\pi_{1}(\orb), \SLpm)/\SLpm \] as 
the $\SLpm$-character variety. 

A representation or a character is \hypertarget{term-st}{{\em stable}} if the orbit of it or its representative is closed and the stabilizer is finite under 
the conjugation action in  
\[\Hom(\pi_1(\mathcal{O}), \PGL(n+1,\bR)) \hbox{ (resp. } \Hom(\pi_{1}(\orb), \SLpm)).\]
By Theorem 1.1 of \cite{JM}, a representation $\rho$ is stable if and only if it is irreducible and 
no proper \hyperlink{term-pgroup}{parabolic subgroup} \index{character!stable} \index{representation!stable}
contains the image of $\rho$. 
The stability and the irreducibility are open conditions in the Zariski topology. 
Also, if the image of $\rho$ is Zariski dense, then $\rho$ is stable. 
$\PGL(n+1, \bR)$ acts properly on the open set of stable representations
in $\Hom(\pi_1(\mathcal{O}), \PGL(n+1,\bR))$. Similarly, 
$\SLpm$ acts so on $\Hom(\pi_{1}(\orb), \SLpm)$.
(See \cite{JM} for more details.)

A representation of a group $G$ into $\PGL(n+1, \bR)$ or $\SLpm$ is {\em strongly irreducible} \index{representation!strongly irreducible}
if the image of every finite index subgroup of $G$ 
is irreducible. 
Actually, many of the orbifolds have strongly irreducible and stable holonomy homomorphisms by Theorem \ref{thm-sSPC}.

An {\em eigen-$1$-form} of a linear transformation $\gamma$ is a linear functional $\alpha$ in $\bR^{n+1}$ 
so that \index{eigen-$1$-form}
$\alpha \circ \gamma = \lambda \alpha$ for some $\lambda \in \bR$. 
We recall the lifting of Remark \ref{rem:SL}. 

\begin{itemize} 
\item  \[\Hom_{\mathcal E}(\pi_1(\mathcal{O}), \PGL(n+1,\bR))\] to be the subspace of representations $h$ satisfying  
\begin{description}
\item[The vertex condition for $\cR$-ends] $h|\pi_{1}(\tilde E)$ 
has a nonzero common eigenvector of positive eigenvalues for 
the lift of $h(\pi_{1}(\tilde E))$ in $\SLpm$ \index{vertex condition}
for each
\hyperlink{term-cRend}{$\cR$-type p-end} fundamental group $\pi_{1}(\tilde E)$,
and
\item[The hyperplane condition for $\cT$-ends ] $h|\pi_{1}(\tilde E)$ 
acts on a hyperplane $P$ 
for each 
\hyperlink{term-cTend}{$\cT$-type p-end} fundamental group $\pi_{1}(\tilde E)$
discontinuously and cocompactly on a lens $L$, a properly convex domain with $L^o\cap P = L \cap P\ne \emp$
or a horoball tangent to $P$. 
\end{description} 
\item We denote by 
 \[\Hom^{s}(\pi_1(\mathcal{O}), \PGL(n+1,\bR))\]
 the subspace of stable and irreducible representations, and define
  \[\Hom_{\mathcal E}^{s}(\pi_1(\mathcal{O}), \PGL(n+1,\bR))\]
 to be \[\Hom_{\mathcal E}(\pi_1(\mathcal{O}), \PGL(n+1,\bR)) \cap \Hom^{s}(\pi_1(\mathcal{O}), \PGL(n+1,\bR)).\]
 
 \item We define
 \[\Hom_{\mathcal E, \mathrm{u}}(\pi_1(\mathcal{O}), \PGL(n+1,\bR))\] 
 to be the subspace of 
 representations $h$  where
 \begin{itemize}
\item  $h|\pi_{1}(\tilde E)$  has a unique common eigenspace of dimension $1$ in $\bR^{n+1}$ with positive eigenvalues
for its lift in $\SLpm$  for each p-end fundamental group $\pi_{1}(\tilde E)$ 
of $\mathcal{R}$-type 
and 
\item $h|\pi_{1}(\tilde E)$  has a common null-space $P$ of eigen-$1$-forms 
uniquely satisfying the following: 
\begin{itemize}
\item $\pi_{1}(\tilde E)$ acts properly on a lens $L$ with $L \cap P$ with nonempty interior in $P$
or 
\item $H-\{p\}$ for a horosphere $H$ tangent to $P$ at $p$
\end{itemize} 
for each p-end fundamental group $\pi_{1}(\tilde E)$ of the end of $\mathcal{T}$-type.


\end{itemize} 
\end{itemize}

\begin{remark} 
The above condition for type $\mathcal{T}$ generalizes the principal boundary condition
for real projective surfaces. 
\end{remark}


 Suppose that there are no ends of $\mathcal{T}$-type. 
Since each $\pi_1(\tilde E)$ is finitely generated and there is only finitely many conjugacy 
classes of $\pi_{1}(\tilde E)$, 
 \[\Hom_{\mathcal E}(\pi_1(\mathcal{O}), \PGL(n+1,\bR))\]
 is a closed semi-algebraic subset. 
 \[\Hom_{\mathcal E, \mathrm{u}}(\pi_1(\mathcal{O}), \PGL(n+1,\bR))\] 
 is an open subset of this closed semi-algebraic subset by Lemma \ref{lem:ucont}. 

\begin{lemma} \label{lem:ucont} 
Let $V$ be a semialgebraic subset of $\PGL(n+1, \bR)^{n}$. 
For each  $(g_{1}, \dots, g_{n}) \in V$, 
we arbitrarily choose a maximal eigenspace $E_{i}(g_{i}) \subset \bR^{n+1}$ corresponding to the eigenvalue $\lambda(g_{i})$
where  $\bigcap_{i=1}^{n} E_{i}(g_{i}) \ne \{0\}$ on every point of semialgebraic subset $V$.
We assume that for each $i$, $(g_{1}, \dots, g_{n}) \in V \mapsto E_{i}(g_{i})\subset \bR^{n+1}$ has a nonzero continuous section on $V$. 
Then the dimension function of the intersection $\bigcap_{i=1}^{n} E_{i}(g_{i})$ 
 is upper semi-continuous in $V$. 
\end{lemma}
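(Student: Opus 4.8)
The plan is to reduce the statement to the elementary fact that the rank of a continuous family of matrices is lower semi-continuous, equivalently the dimension of the kernel is upper semi-continuous. First I would fix a point $(g_1^0, \dots, g_n^0) \in V$ and set $d = \dim \bigcap_{i=1}^n E_i(g_i^0)$; the goal is to show that for all $(g_1, \dots, g_n)$ in some neighborhood of $(g_1^0, \dots, g_n^0)$ in $V$ one has $\dim \bigcap_{i=1}^n E_i(g_i) \le d$. By hypothesis each assignment $(g_1, \dots, g_n) \mapsto E_i(g_i)$ admits a continuous nonzero section on $V$; I would actually want a local continuous frame, i.e. near the fixed point a continuous choice of a basis $v_{i,1}(g), \dots, v_{i,k_i}(g)$ of $E_i(g_i)$ where $k_i = \dim E_i(g_i^0)$. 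This is where one must be slightly careful: a maximal eigenspace can jump in dimension, so I would first shrink $V$ to the (semialgebraic, locally closed) subset on which $\dim E_i(g_i)$ is locally constant equal to its value at the base point — on that piece the eigenspace varies continuously and a continuous frame exists by standard linear algebra (e.g. Gram–Schmidt applied to continuous spanning sets, or the implicit function theorem applied to the relevant minors).

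Next I would encode the intersection as a kernel. Stack the frames: consider the linear map $\Phi(g) \colon \bR^{k_1} \oplus \cdots \oplus \bR^{k_n} \to (\bR^{n+1})^{n-1}$ sending $(a_1, \dots, a_n)$, with $a_i = \sum_j a_{i,j} v_{i,j}(g)$ interpreted as a vector $w_i(g) \in E_i(g_i)$, to the tuple of differences $(w_1(g) - w_2(g), w_2(g) - w_3(g), \dots, w_{n-1}(g) - w_n(g))$. The entries of the matrix of $\Phi(g)$ are continuous functions of $g$ on the shrunken neighborhood, since they are built from the continuous frame vectors. A tuple $(w_i(g))$ has all $w_i(g)$ equal to a common vector $v$ precisely when it lies in $\ker \Phi(g)$, and that common vector then lies in $\bigcap_i E_i(g_i)$; conversely any $v \in \bigcap_i E_i(g_i)$ gives such a tuple. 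Hence $\dim \ker \Phi(g) = \dim \bigcap_{i=1}^n E_i(g_i)$ (using that the frames are bases, so the correspondence between coefficient tuples and vectors $w_i$ is injective).

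Finally I would invoke lower semi-continuity of rank: $\mathrm{rank}\, \Phi(g) \ge \mathrm{rank}\, \Phi(g^0)$ for $g$ near $g^0$, because the non-vanishing of some $\mathrm{rank}\,\Phi(g^0) \times \mathrm{rank}\,\Phi(g^0)$ minor is an open condition and minors are continuous in $g$. By rank–nullity applied to $\Phi(g)$, whose source has fixed dimension $k_1 + \cdots + k_n$ on the neighborhood, this gives $\dim \ker \Phi(g) \le \dim \ker \Phi(g^0)$, i.e. $\dim \bigcap_i E_i(g_i) \le \dim \bigcap_i E_i(g_i^0)$ near $g^0$. Since $g^0 \in V$ was arbitrary, the dimension function is upper semi-continuous on $V$. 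I expect the main obstacle to be purely a matter of careful bookkeeping at the point where eigenspace dimensions themselves jump: one has to argue that restricting to the stratum of locally constant $\dim E_i$ loses nothing, because on the complementary strata $\dim E_i(g_i)$ has already dropped, and then apply the argument stratum by stratum, combining with the fact that a function which is upper semi-continuous on each piece of a suitable stratification and compatible along the closure relations is upper semi-continuous globally; alternatively, one phrases everything directly in terms of the semialgebraic structure and uses that the frame hypothesis is exactly what makes the naive argument work without stratifying.
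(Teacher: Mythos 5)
Your overall strategy --- realize $\bigcap_i E_i(g_i)$ as the kernel of a continuously varying linear map and invoke lower semi-continuity of rank --- is sound, but the step where you build a continuous local frame for each $E_i(g_i)$ has a real gap. A continuous frame of size $k_i=\dim E_i(g_i^0)$ exists only where $\dim E_i(g_i)$ is locally constant, and you handle the locus where it jumps by restricting to that stratum and asserting that on the complementary strata ``$\dim E_i(g_i)$ has already dropped'' so nothing is lost. That inference is not valid: upper semi-continuity at $g^0$ requires bounding $\dim\bigcap_i E_i(g_i)$ by $\dim\bigcap_i E_i(g_i^0)$ for \emph{all} nearby $g$, including those on the lower strata, and a drop in the individual eigenspace dimensions only yields $\dim\bigcap_j E_j(g_j)\le \min_i \dim E_i(g_i)$, which may well exceed $\dim\bigcap_i E_i(g_i^0)$. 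The ``compatibility along the closure relations'' you invoke at the very end is exactly the content of the lemma on those strata and is nowhere established.

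The gap is easily repaired by dispensing with frames altogether. The continuous nonzero section $s_i$ forces the eigenvalue to be continuous, since $\tilde g_i s_i(g)=\lambda(g_i)s_i(g)$ gives $\lambda(g_i)=\langle \tilde g_i s_i(g), s_i(g)\rangle/\lVert s_i(g)\rVert^2$ for a continuous local choice of lifts $\tilde g_i$; hence
\[ \Psi(g)\colon v\mapsto \bigl((\tilde g_1-\lambda(g_1)\Idd)v,\dots,(\tilde g_n-\lambda(g_n)\Idd)v\bigr) \]
is a continuous family of linear maps $\bR^{n+1}\to(\bR^{n+1})^n$ with $\ker\Psi(g)=\bigcap_i E_i(g_i)$ on the nose, and your rank semi-continuity argument then applies directly, with no stratification and no frames. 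Note also that the paper's own proof takes yet another route: it is a one-line limiting argument, namely that along a sequence $g^{(k)}\to g^0$ the Grassmannian limits of the subspaces $E_i(g_i^{(k)})$ are contained in eigenspaces of $g_i^0$, so the limit of the intersections, whose dimension dominates the limsup of the intersection dimensions, sits inside $\bigcap_i E_i(g_i^0)$.
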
 
\begin{proof} 
Since  the limit subspace of $E_{i}(g)$ is contained in an eigenspace of $g$, 
this follows. 
\end{proof} 


When there are ends of $\mathcal{T}$-type, 
\[\Hom_{\mathcal E}(\pi_1(\mathcal{O}), \PGL(n+1,\bR)) \hbox{ and } \Hom_{\mathcal E, \mathrm{u}}(\pi_1(\mathcal{O}), \PGL(n+1,\bR))\] 
 are the unions of open subsets of this closed semi-algebraic subsets
 since we have to consider the lens condition. 

We define
\begin{itemize} 
\item  \[\rep_{\mathcal E}(\pi_1(\mathcal{O}), \PGL(n+1,\bR))\] to be
 \[\Hom_{\mathcal E}(\pi_1(\mathcal{O}), \PGL(n+1,\bR))/\PGL(n+1, \bR).\]
\item We denote by 
 \[\rep^{s}_{\mathcal E}(\pi_1(\mathcal{O}), \PGL(n+1,\bR))\]
 the subspace of \[\rep_{\mathcal E}(\pi_1(\mathcal{O}), \PGL(n+1,\bR))\]
 of \hyperlink{term-st}{stable} and irreducible characters.
 \item 
 We define
 \[\rep_{\mathcal E, \mathrm{u}}(\pi_1(\mathcal{O}), \PGL(n+1,\bR))\] to be
 \[\Hom_{\mathcal E, \mathrm{u}}(\pi_1(\mathcal{O}), \PGL(n+1,\bR))/\PGL(n+1, \bR).\]
 \item We define 
\begin{align} 
&  \rep_{{\mathcal E}, \mathrm{u}}^s(\pi_1(\mathcal{O}), \PGL(n+1,\bR)) \nonumber \\ 
 & := \rep^s(\pi_1(\mathcal{O}), \PGL(n+1,\bR)) \cap \rep_{{\mathcal E}, \mathrm{u}}(\pi_1(\mathcal{O}), \PGL(n+1,\bR)).
  \end{align} 
\end{itemize} 

Note that when there are no $\mathcal{T}$-type ends, 
 \[\rep^{s}_{\mathcal E}(\pi_1(\mathcal{O}), \PGL(n+1,\bR))\]
 is a closed subset of 
 \[\rep^{s}(\pi_1(\mathcal{O}), \PGL(n+1,\bR)), \hbox{ and }\]
 \[\rep^{s}_{{\mathcal E}, \mathrm{u}}(\pi_1(\mathcal{O}), \PGL(n+1,\bR))\]
 is an open subset of 
 \[\rep^{s}_{\mathcal E}(\pi_1(\mathcal{O}), \PGL(n+1,\bR)).\]



Note that elements of $\Def_{\mathcal E}(\orb)$ have characters in \[\rep_{{\mathcal E}}(\pi_1(\mathcal{O}), \PGL(n+1,\bR)).\]
Denote by $\Def_{{\mathcal E}, \mathrm{u}}(\mathcal{O})$ the subspace of $\Def_{{\mathcal E}}(\mathcal{O})$ of equivalence classes of real projective structures 
with characters in \[\rep_{{\mathcal E}, \mathrm{u}}(\pi_1(\mathcal{O}), \PGL(n+1,\bR)).\] 
Also, we denote by $\Def_{{\mathcal E}}^s(\mathcal{O}) \subset \Def_{{\mathcal E}}(\mathcal{O})$
and $\Def_{{\mathcal E}, \mathrm{u}}^s(\mathcal{O}) \subset \Def_{{\mathcal E}, \mathrm{u}}(\mathcal{O})$ the subspaces of equivalence classes of real projective 
structures with stable and irreducible characters. 

\subsection{Oriented real projective structures} \label{subsec-orp} 

Recall that $\SL_\pm(n+1, \bR)$ is isomorphic to $\GL(n+1, \bR)/\bR^+$. 
Then this group acts on $\SI^n = \SI(\bR^{n+1})$.
We let $[v]$ denote the equivalence class of $v \in \bR^{n+1} -\{O\}$. 
There is a double covering map $\SI^n \ra \rpnn^n$ with the deck transformation group generated by
$\mathcal{A}$. 
This gives a projective structure on $\SI^n$. The group of projective automorphisms is identified with $\SL_\pm(n+1, \bR)$. \index{$\SL_\pm(n+1, \bR)$}


An $(\SI^n, \SL_\pm(n+1, \bR))$-structure on $\mathcal{O}$ is said to be an {\em oriented real projective structure} on $\mathcal{O}$. \index{real projective structure!oriented} 
We define $\Def_{\SI^n}(\mathcal{O})$ as the deformation space of $(\SI^n, \SL_\pm(n+1, \bR))$-structures on $\mathcal{O}$.

Again, we can define the {\em radial end structures} and {\em totally geodesic ideal boundary} for \index{end!radial end structure} 
oriented real projective structures and also horospherical end neighborhoods in obvious ways. \index{end!totally geodesic!ideal boundary} 
They correspond in the direct way in the following theorem also. 

\begin{theorem}\label{thm-doubledef} 
There is a one-to-one correspondence between the space of real projective structures on an orbifold $\orb$ 
with the space of oriented real projective structures on $\orb$. 
Moreover, a real projective diffeomorphism
of real projective orbifolds is an $(\SI^n, \SL_\pm(n+1, \bR))$-diffeomorphism of oriented real projective orbifolds
and vice versa. 
\end{theorem}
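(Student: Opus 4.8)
The plan is to exploit the fact that $\SI^n$ is the orientation double cover of $\rpn$, equivariantly with respect to the action of $\SL_\pm(n+1,\bR)$ on $\SI^n$ and $\PGL(n+1,\bR)$ on $\rpn$. Concretely, the covering map $q:\SI^n\to\rpn$ is a local diffeomorphism and $\SL_\pm(n+1,\bR)\to\PGL(n+1,\bR)$ is the quotient by the order-two center $\langle -\Idd\rangle=\langle\mathcal{A}\rangle$, so that $q\circ g = \bar g\circ q$ for every $g\in\SL_\pm(n+1,\bR)$ with image $\bar g$. Since the property of being an $(X,G)$-structure is a local one and $q$ respects the $(X,G)$-geometries on the two models, a genuine $(\SI^n,\SL_\pm)$-atlas on $\orb$ pushes forward to an $(\rpn,\PGL)$-atlas by composing charts with $q$; this gives a well-defined map in one direction on the level of structures, and one checks it descends to deformation/isotopy classes.

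The substance is the reverse direction: given a real projective structure on $\orb$, produce a canonical oriented one. First I would pass to the development pair $(\dev,h)$ with $\dev:\torb\to\rpn$ and $h:\pi_1(\orb)\to\PGL(n+1,\bR)$. The key step is to lift $\dev$ through $q$. One forms the pullback $q^*\torb\to\torb$, a double cover of $\torb$; because $\torb$ is simply connected, this double cover is disconnected, i.e.\ it has exactly two sheets, each mapping diffeomorphically to $\torb$. Choosing one sheet gives a canonical lift $\widetilde{\dev}:\torb\to\SI^n$ with $q\circ\widetilde{\dev}=\dev$, and $\widetilde{\dev}$ is again an immersion since $q$ is a local diffeomorphism. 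For the holonomy: for each $\gamma\in\pi_1(\orb)$ we have $q\circ\widetilde{\dev}\circ\gamma = h(\gamma)\circ q\circ\widetilde{\dev}$; lifting $h(\gamma)$ to the unique element $\tilde h(\gamma)\in\SL_\pm(n+1,\bR)$ specified by the convention in Remark~\ref{rem:SL} (the one preserving each component of the preimage of the image), one verifies $\widetilde{\dev}\circ\gamma=\tilde h(\gamma)\circ\widetilde{\dev}$ by uniqueness of lifts, using that both sides agree after composing with $q$ and that $\torb$ is connected. Thus $\gamma\mapsto\tilde h(\gamma)$ is a homomorphism and $(\widetilde{\dev},\tilde h)$ defines an $(\SI^n,\SL_\pm)$-structure on $\orb$. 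The two assignments are mutually inverse up to the isotopy equivalence relation, and conjugating $\dev$ by $\PGL(n+1,\bR)$ corresponds to conjugating $\widetilde{\dev}$ by $\SL_\pm(n+1,\bR)$, giving a homeomorphism of deformation spaces; the statement about diffeomorphisms follows because an $(\rpn,\PGL)$-diffeomorphism lifts, again by the simple-connectivity-of-universal-cover lifting argument applied to the map between universal covers, to an $(\SI^n,\SL_\pm)$-diffeomorphism, and conversely it descends.

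The main obstacle, and the only place requiring care, is the orbifold bookkeeping: $\torb$ is the orbifold universal cover, which is a manifold, so the lifting of $\dev$ and of local charts is straightforward there, but one must check that the lifted charts are compatible with the finite-group quotient structure defining $\orb$ as an orbifold, i.e.\ that the local isotropy groups act compatibly through the lifted developing map. Here one uses that the isotropy groups are finite and that their images in $\PGL(n+1,\bR)$ lift canonically to $\SL_\pm(n+1,\bR)$ via Remark~\ref{rem:SL}, so the orbifold-chart compatibility conditions for the $\rpn$-structure transfer verbatim to $\SI^n$-chart compatibility. I would organize the proof so that this lifting lemma (for developing maps, holonomy, and diffeomorphisms simultaneously, via uniqueness of lifts on simply connected spaces) is stated once and then applied three times; the rest is routine verification that the correspondence respects the equivalence relations defining $\Def_{\SI^n}(\orb)$ and $\Def(\orb)$, including the radial-foliation and ideal-boundary markings, which lift canonically since $q$ is a projective local diffeomorphism carrying geodesics to geodesics and half-spaces to half-spaces.
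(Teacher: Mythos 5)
Your proposal is correct and is essentially the argument the paper intends: the paper's ``proof'' is just the one-line citation to p.~143 of Thurston, which is precisely this standard lifting of the developing map through the double cover $\SI^n\to\rpn$ and of the holonomy through $\SLpm\to\PGL(n+1,\bR)$, using simple connectivity of $\torb$. The only nit is that the lift $\tilde h(\gamma)$ should be pinned down by requiring it to match $\widetilde{\dev}\circ\gamma$ at a basepoint (the convention of Remark~\ref{rem:SL} only singles out a unique lift when $q^{-1}(\dev(\torb))$ is disconnected), but this is an immediate fix and not a gap.
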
 
\begin{proof} 
Straightforward. See p. 143 of Thurston \cite{Thbook}.
\end{proof}


\subsection{The  local homeomorphism theorems} 
For technical reasons, we will be assuming $\partial \orb = \emp$ in most cases. 
Here, we are not yet concerned with convexity of orbifolds.
The following map \hypertarget{term-hol}{$\hol$}, the so-called {\em Ehresmann-Thurston map}, is induced by sending $(\dev, h)$ to the conjugacy class of $h$ as 
isotopies preserve $h$:

\begin{theorem}[\cite{convMa}] \label{thm-A} 
Let $\mathcal{O}$ be a noncompact strongly tame real projective $n$-orbifold with radial ends or totally-geodesic ends of lens-type with 
markings and given types $\cR$ or $\cT$.  Assume $\partial \orb =\emp$. 
Then the following map is a local homeomorphism\,{\rm :}  
\[\hol:\Def_{{\mathcal E}, \mathrm{u}}^s(\mathcal{O}) \ra \rep_{{\mathcal E}, \mathrm{u}}^s(\pi_1(O), \PGL(n+1,\bR)).\]
\end{theorem}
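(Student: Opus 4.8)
The plan is to prove that $\hol$ is a local homeomorphism in the standard Ehresmann--Thurston style, combining the classical deformation theory of $(G,X)$-structures with the extra bookkeeping forced by the end conditions. First I would recall the classical result (Thurston, Canary--Epstein--Green, Goldman, and in the orbifold case \cite{dgorb}): for a compact orbifold $\orb'$ (possibly with boundary) the map sending a development pair to the conjugacy class of its holonomy is a local homeomorphism from the deformation space of $(\rpnn^n,\PGL(n+1,\bR))$-structures to the character variety. The idea is to apply this to a compact core $\orb'$ of $\orb$ obtained by truncating each end, so that $\pi_1(\orb')=\pi_1(\orb)$ and restriction of structures is transparent. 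The work then consists of showing that (a) the end-marking data (radial foliations for $\cR$-ends, ideal totally geodesic boundary / horosphere for $\cT$-ends) varies continuously and can be reconstructed from nearby holonomy, and (b) the subspace conditions $\mathcal E$ and $\mathrm{u}$ match up on both sides so that $\hol$ restricts to a local homeomorphism between the constrained subspaces $\Def^s_{\mathcal E,\mathrm u}(\orb)$ and $\rep^s_{\mathcal E,\mathrm u}$.

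The key steps, in order, are as follows. Step 1: set up the compactified orbifold $\ovl{\orb}$ (with radial foliations on $\cR$-ends and ideal boundary added to $\cT$-ends) and its universal cover $\widehat{\orb}$, so that a structure in $\Def_{\mathcal E,\mathrm u}(\orb)$ is recorded by a developing map $\dev|\widehat{\orb}$ extending smoothly to the compactification, with the $C^2$ compact-open topology as defined in Section~\ref{subsub-defspace}. Step 2: prove \emph{continuity and openness of} $\hol$ — continuity is immediate since $\dev\mapsto h$ is continuous; openness at a structure $\mu$ follows by the usual argument of deforming $\dev$ on a fundamental domain using a partition of unity to realize a nearby holonomy, then checking the deformed map still descends. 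Here one uses that the end fundamental groups $\pi_1(\tilde E)$ are finitely generated (strong tameness) and that the vertex/hyperplane conditions are preserved under small deformation: for $\cR$-ends the common positive eigenvector persists and stays unique (using Lemma~\ref{lem:ucont} for the $\mathrm u$-condition), and for $\cT$-ends the lens or horoball and its supporting hyperplane persist by an argument on geometric limits of convex sets, invoking Lemma~\ref{lem:bdconv}. Step 3: prove \emph{local injectivity} of $\hol$ — if two nearby structures $\mu_0,\mu_1$ have conjugate holonomy, conjugate the developing maps to agree on $\pi_1$, then use that both develop a common compact core and that the radial foliations and ideal boundary components are determined by the (now equal) holonomy on each $\pi_1(\tilde E)$ — the fixed point of an $\cR$-end holonomy is unique under $\mathrm u$, and the hyperplane of a $\cT$-end is unique — to build an isotopy of $\ovl\orb$ carrying $\mu_0$ to $\mu_1$ respecting all end markings, which is precisely the equivalence relation defining $\Def_{\mathcal E,\mathrm u}(\orb)$. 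Step 4: assemble (2) and (3): a continuous, open, locally injective map is a local homeomorphism onto its image; intersecting with the stable locus $\rep^s$, which is open in both source and target (stability and irreducibility are open conditions), gives the stated conclusion.

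I expect the main obstacle to be Step 3, local injectivity, specifically the construction of the isotopy near the ends that simultaneously matches the radial foliation markings on $\cR$-ends and the ideal boundary markings on $\cT$-ends. The delicate point is that an end neighborhood with given holonomy may a priori carry inequivalent radial foliations or inequivalent compactifications (the paper explicitly warns about the two projectively inequivalent ways to add an ideal boundary component to an elementary annulus, cf.\ Section~1.4 of \cite{cdcr2}); one must use the $\mathrm u$-condition — uniqueness of the fixed point for $\cR$-ends and uniqueness of the null-space hyperplane $P$ for $\cT$-ends — together with the lens-type hypothesis to rigidify the end structure enough that the marking is forced by the holonomy, so that the core isotopy extends compatibly over the ends. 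A secondary technical nuisance is verifying that the various end subspaces cut out of $G^m$ are genuinely (locally) the sets over which the deformation theory applies, i.e.\ that the $C^2$-extension-to-$\widehat\orb$ topology on $\Def_{\mathcal E,\mathrm u}(\orb)$ agrees with the topology pulled back along $\hol$; this is where one leans on the detailed end theory of \cite{endclass} and \cite{convMa}, and I would cite \cite{convMa} for the full argument, since the statement is quoted from there.
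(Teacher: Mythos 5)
Your outline is the standard Ehresmann--Thurston argument adapted to the end data, and it matches the approach the paper points to: Theorem~\ref{thm-A} is stated here without proof, being imported from \cite{convMa}, with the surrounding references (Goldman, Canary--Epstein--Green, Lok, Bergeron--Gelander, \cite{dgorb}) indicating exactly the route you take --- compact core, persistence/reconstruction of end markings from nearby holonomy, and the $\mathrm{u}$-condition to rigidify the markings for local injectivity. So there is no in-paper proof to compare against in detail, but your decomposition into continuity, openness, and local injectivity, and your identification of Step~3 (forcing the radial foliation and ideal-boundary markings from the holonomy via uniqueness of the fixed point and of the invariant hyperplane) as the delicate point, are consistent with the cited treatment. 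One caution on Step~2: for a noncompact orbifold the phrase ``deform $\dev$ on a fundamental domain'' should really be ``deform on the compact core and then \emph{rebuild} the end structure from the new holonomy'' (coning to the new fixed point supplied by the continuously varying section for $\cR$-ends, and using the persisting lens for $\cT$-ends); this extension over the ends is the actual content of openness and is precisely why the Remark following the theorem says the end-type restrictions are necessary, so it should not be treated as a routine partition-of-unity step.
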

Also, we define
 \[\rep_{\mathcal E}^s(\pi_1(\mathcal{O}), \SLpm), \rep^s_{\mathcal E, \mathrm{u}}(\pi_1(\mathcal{O}), \SLpm) \]  
 similarly to Section \ref{subsub-defspace}.
 
 By lifting $(\dev, h)$ by the method of Section \ref{subsec-orp}, 
we obtain that
 \[\hol:\Def_{{\mathcal E}, \mathrm{u}}^s(\mathcal{O}) \ra \rep_{{\mathcal E}, \mathrm{u}}^s(\pi_1(O), \SLpm)\]
is a local homeomorphism. 

\begin{remark}
The restrictions of end types are necessary for this theorem to hold. 
(See Goldman \cite{Goldman3}, Canary-Epstein-Green \cite{Canary}, Choi \cite{dgorb}, and Bergeron-Gelander \cite{BG04} for many versions of similar results.)
\end{remark}




\section{Convex real projective structures} \label{sec-convr}

\subsection{Metrics} \label{subsec-metrics}

Let $\Omega$ be a properly convex open domain. A line or a subspace of dimension-one 
in $\rpnn^n$ has a two-dimensional homogenous coordinate system. 
Define a metric by defining the distance for $p, q \in \Omega$, 
\[d_\Omega(p, q)= \log|[o,s,q,p]|\]
where $o$ and $s$ are 
endpoints of the maximal segment in $\Omega$ containing $p, q$
where $o, q$ separates $p, s$
and $[o,s,q,p]$ denotes the cross ratio. 

Given a properly convex real projective structure 
on ${\mathcal{O}}$, there is a \hypertarget{term-hilmetric}{Hilbert metric} 
which we denote by $d_{\torb}$ \index{$d_{\torb}$}
on $\tilde{\mathcal{O}}^{o}$.
Since the metric $d_{\torb}$ is invariant under the deck transformation group, 
we obtain a metric $d_{\orb}$ on $\orb$.
Assume that $K_i \ra K$ \hyperlink{term-geoc}{geometrically} for a sequence of properly convex domains $K_i$ and
a properly convex domain $K$.  
Suppose that two sequences of points $\{x_i| x_i \in K_i^o\}$ and $\{y_i| y_i \in K_i^o\}$ 
converge to $x, y \in K^o$ respectively. Since the end of a maximal segments always are in $\partial K_i$
and $\partial K_i \ra \partial K$ by Lemma \ref{lem:bdconv}, we obtain
\begin{equation} \label{eqn:HiHa}
d_{K_i^o}(x_i, y_i) \ra d_{K^o}(x, y)
\end{equation} 
holds.  

\subsection{Convexity and convex domains}\label{subsec-conv}

\begin{proposition}[Kuiper \cite{Kuiper}, Koszul \cite{Kos}, Vey \cite{Vey68}]\label{prop-projconv} $ $
\begin{itemize}
\item A strongly tame real projective orbifold is properly convex if and only if each developing map sends 
the universal cover to a properly convex open domain bounded in an affine subspace of $\rpnn^n$. 
\item If a strongly tame convex real projective orbifold 
is not properly convex, then
its holonomy homomorphism is virtually reducible.
\end{itemize}
\end{proposition}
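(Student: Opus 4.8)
The plan is to analyze the two parts of Proposition \ref{prop-projconv} separately, both relying on classical convexity theory in $\rpnn^n$.

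\textbf{First part: properly convex iff developing image is properly convex and bounded in an affine subspace.} One direction is essentially by definition: if $\dev$ maps $\torb$ to a properly convex open domain $\Omega$ in an affine subspace, then $\orb = \Omega/\Gamma$ is by definition a properly convex real projective orbifold (using that $\dev$ is an imbedding onto a convex domain and $h$ an isomorphism onto a discrete group acting properly, as recorded in the discussion after the definition of convex real projective orbifolds). For the converse, suppose $\orb$ is properly convex, so $\orb$ is projectively diffeomorphic to $\Omega/\Gamma$ with $\Omega$ properly convex. The key point is that the convex developing image is then unique up to projective automorphism: I would invoke the standard fact (going back to Kuiper, and to the convexity theory for $(G,X)$-structures) that a convex real projective structure on an orbifold has a developing map that is an imbedding onto a convex domain, and that two convex developing images of the same structure differ by an element of $\PGL(n+1,\bR)$. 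Hence $\dev(\torb)$ is projectively equivalent to $\Omega$, which is properly convex and bounded in an affine subspace; properly convex domains are carried to properly convex domains bounded in affine subspaces by projective maps (a projective map carrying a properly convex set, which lies in an affine chart avoiding a supporting hyperplane, to its image, which still avoids a hyperplane), so $\dev(\torb)$ has the stated form. Strong tameness is not really used here beyond ensuring $\orb$ is connected and $\pi_1(\orb)$ makes sense; the main content is the uniqueness of the convex developing image.

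\textbf{Second part: a convex but not properly convex strongly tame orbifold has virtually reducible holonomy.} Here I would use the structure theory of (not necessarily properly) convex domains in $\rpnn^n$. If $\Omega = \dev(\torb)$ is convex but not properly convex, then $\clo(\Omega)$ contains a maximal complete affine subspace, or more precisely $\Omega$ contains a full line of $\rpnn^n$; equivalently $\Omega$ is not contained in any affine chart as a bounded set. The classical fact (Vey, Koszul; see also Benoist's work on convex cones) is that such a convex domain has a nontrivial "lineality" subspace: the set of directions $v$ such that the line through any point of $\Omega$ in direction $v$ stays in $\clo(\Omega)$ spans a proper nonzero projective subspace $L$ of $\rpnn^n$, and $L$ — being canonically attached to $\Omega$ — is invariant under every projective automorphism of $\Omega$, in particular under $\Gamma = h(\pi_1(\orb))$. (One should also treat the degenerate case where $\Omega$ itself is all of an affine subspace or a half-space, where $\Gamma$ visibly preserves the hyperplane at infinity or its boundary hyperplane.) A projective subspace invariant under $\Gamma$ corresponds to a $\Gamma$-invariant linear subspace of $\bR^{n+1}$, so $\Gamma$, hence the holonomy, is reducible — and in fact reducible on the nose, so certainly virtually reducible.

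The main obstacle, and the step I would spend the most care on, is the second part: extracting a canonical $\Gamma$-invariant proper subspace from the failure of proper convexity. The cleanest route is to pass to the homogeneous cone $C \subset \bR^{n+1}$ lying over $\Omega$ (choosing a lift of $\Omega$ to $\SI^n$ and then the open cone on it) and let $C_0$ be the largest linear subspace such that $C + C_0 = C$; this $C_0$ is the lineality space of the convex cone $C$, it is nonzero precisely because $\Omega$ is not properly convex, it is proper because $\Omega \neq \rpnn^n$ (the developing map of an orbifold with $\pi_1$ acting properly cannot be onto in the relevant sense — or one argues $C$ is not all of $\bR^{n+1}$), and it is manifestly invariant under any linear automorphism preserving $C$, hence under the lift of $\Gamma$. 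Then $\bP(C_0)$ is the desired invariant subspace. I would cite Vey \cite{Vey68} and Koszul \cite{Kos} for the existence and invariance of the lineality space of a convex cone, and note that the passage from convex orbifold to invariant cone is exactly the setup already described in Section \ref{sub:prel}.
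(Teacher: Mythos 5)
The paper offers no proof of this proposition---it is quoted as a classical result with citations to Kuiper, Koszul, and Vey---so there is nothing internal to compare against; judged on its own, your argument is correct and is the standard one behind those citations. The first bullet is essentially definitional in this paper's setup (the paper has already arranged that $\dev$ is an imbedding onto a convex domain for convex structures, and developing maps are unique up to post-composition by $\PGL(n+1,\bR)$), and your treatment of the second bullet via the lineality space $C_0=\clo(C)\cap(-\clo(C))$ of the cone $C$ over $\dev(\torb)$ is clean: $C_0$ is nonzero exactly when the domain is not properly convex, is proper because $C$ lies in a half-space, and is canonically attached to $C$, hence preserved by the lifted holonomy, giving reducibility outright (which is stronger than the stated ``virtually reducible'').
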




\begin{proposition}[Corollary 2.13 of Benoist \cite{Ben3}]\label{prop-Benoist}  
Suppose that a discrete subgroup $\Gamma$ of $\PGL(n, \bR)$ \rlp resp. $\SL_{\pm}(n, \bR)$\rrp \,
 acts on a properly convex $(n-1)$-dimensional open domain $\Omega$ in $\rpnn^{n-1}$ \rlp resp. $\SI^{n-1}$\rrp \, so 
that $\Omega/\Gamma$ is compact. Then the following statements are equivalent. 
\begin{itemize} 
\item Every subgroup of finite index of $\Gamma$ has a finite center. 
 \item Every subgroup of finite index of $\Gamma$ has a trivial center. 
\item Every subgroup of finite index of $\Gamma$ is irreducible in $\PGL(n, \bR)$ \rlp resp. $\SL_{\pm}(n, \bR)$\rrp. 
That is, $\Gamma$ is strongly irreducible. 
\item The Zariski closure of $\Gamma$ is semisimple. 
\item $\Gamma$ does not contain a normal infinite nilpotent subgroup. 
\item $\Gamma$ does not contain a normal infinite abelian subgroup.
\end{itemize}
\end{proposition}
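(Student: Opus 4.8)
The plan is to prove the six conditions equivalent by separating the purely group-theoretic implications from the handful that genuinely use the hypothesis that $\Gamma$ divides $\Omega$, invoking for the latter the structure theory of divisible properly convex domains (Vey, Benoist).

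\emph{Algebraic part.} Write $G$ for the Zariski closure of $\Gamma$ in $\PGL(n,\bR)$ (resp. $\SL_{\pm}(n,\bR)$). If $G$ is semisimple and $N$ is a normal nilpotent subgroup of $\Gamma$, then its Zariski closure is normal and nilpotent in $G$, so its identity component lies in the (trivial) solvable radical of $G$; hence $N$ is finite. Thus semisimplicity of $G$ implies that $\Gamma$ has no normal infinite nilpotent subgroup; since abelian groups are nilpotent this implies, and is implied by (pass to a characteristic infinite abelian subgroup inside the Fitting subgroup), the absence of a normal infinite abelian subgroup. Trivially, ``every finite-index subgroup has trivial center'' implies ``every finite-index subgroup has finite center''. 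Finally, strong irreducibility rules out a normal infinite abelian subgroup with no reference to convexity: for a normal connected abelian subgroup of the $\bR$-irreducibly acting preimage $\tilde G \subset \GL(n,\bR)$ of $G$, the simultaneous-eigenspace decomposition is permuted, hence fixed, by the connected group $\tilde G$, so $\bR$-irreducibility leaves one eigencomponent and the subgroup acts by a single (real or complex-type) character; it therefore lies in the $\bR$-scalars, which are trivial in $\PGL(n,\bR)$, or in a compact circle, and a discrete group has finite intersection with either — so the normal abelian subgroup is finite.

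\emph{Geometric part.} It remains to show, using that $\Gamma$ acts properly discontinuously and cocompactly on the properly convex $\Omega$: that the absence of a normal infinite abelian subgroup forces strong irreducibility, semisimplicity of $G$, and triviality of the center of every finite-index subgroup; and conversely that ``every finite-index subgroup has finite center'' forces the absence of a normal infinite abelian subgroup. The tool is the decomposition theorem for divisible convex sets. Passing to the normal core of a finite-index subgroup (which still divides $\Omega$), if $\Gamma$ is not strongly irreducible then $\Omega$ splits as a nontrivial projective join $\Omega_1 \ast \cdots \ast \Omega_k$ with a matching decomposition of $\bR^n$, a finite-index subgroup of $\Gamma$ is isomorphic to $D \times \Gamma_1 \times \cdots \times \Gamma_k$ with each $\Gamma_i$ dividing an indecomposable $\Omega_i$, and the diagonalizable factor $D$ --- forced to be infinite free abelian precisely in order to make the join cocompact, together with the diagonal subgroups of any simplex factors --- is a characteristic, hence $\Gamma$-normal, infinite abelian subgroup. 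This gives at once: not-strongly-irreducible $\Rightarrow$ a normal infinite abelian subgroup, and conversely a normal infinite abelian subgroup $\Rightarrow$ $\Gamma$ virtually splits off a free abelian direct factor $\Rightarrow$ a finite-index subgroup with infinite center. In the strongly irreducible case, each indecomposable divisible factor that is not a point has semisimple Zariski closure --- the unipotent radical of its closure would fix a subspace, which is impossible, and there is no central circle, a compact group commuting with the cocompact $\Gamma$ and acting on $\Omega$ being forced to fix a $\Gamma$-orbit (a net) and hence to be trivial --- so $G$ is semisimple. Triviality of the center of a finite-index $\Gamma'$ follows similarly: under the hypothesis no normal infinite abelian subgroup exists, $Z(\Gamma')$ is torsion, hence finite (Schur), and a nontrivial finite-order central element would fix a point of $\Omega$, hence a $\Gamma'$-net, which no nontrivial projective transformation can do.

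\emph{Main obstacle.} The real work is the decomposition theorem itself --- equivalently, showing that a divisible $\Omega$ whose dividing group has a normal infinite abelian (or unipotent) subgroup must be a nontrivial projective join. This rests on the facts that for a cocompact action the limit set is all of $\partial\Omega$ and $\Gamma$ acts minimally on it, so the limit set of a normal subgroup already equals $\partial\Omega$, together with an analysis showing that the dynamics of a normal infinite abelian subgroup on $\clo(\Omega)$ are too degenerate to produce the full boundary unless it contains a flat join piece. I would quote this from Benoist's work on convex divisible sets rather than reprove it.
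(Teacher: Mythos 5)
This proposition is quoted in the paper verbatim from Benoist (Corollary 2.13 of \cite{Ben3}) with no proof supplied, and your sketch takes essentially the same route as the cited source: the purely algebraic implications are handled by Zariski-closure and Schur-type arguments, while every implication that genuinely uses the dividing hypothesis is deferred to the join/product decomposition theorem for divisible properly convex sets (recorded in this paper as Proposition \ref{prop-Ben2}). Your outline is correct; the only places needing a word of care are (i) passing to the normal core so that the center of a finite-index subgroup yields a subgroup that is actually normal in all of $\Gamma$ before the no-normal-infinite-abelian hypothesis can be applied to it, and (ii) in the semisimplicity step, justifying that a central torus of the Zariski closure really acts on $\Omega$ (or instead quoting Benoist's result that the commutant of an irreducible dividing group is $\bR$) --- both standard repairs rather than gaps.
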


\begin{theorem}[Theorem 1.1 of Benoist \cite{Ben3}] \label{thm-Benoist} 
Let $\Gamma$ be a discrete subgroup of $\PGL(n, \bR)$ {\rm (}resp. $\SL_{\pm}(n, \bR)${\rm )} with a trivial virtual center. 
Suppose that a discrete subgroup $\Gamma$ of $\PGL(n, \bR)$ {\rm (}resp. $\SL_{\pm}(n, \bR)${\rm )} acts on 
a properly convex $(n-1)$-dimensional open domain $\Omega$ so 
that $\Omega/\Gamma$ is a compact orbifold.
Then every representation of a component of $\Hom(\Gamma, \PGL(n, \bR))$ {\rm (}resp. $\Hom(\Gamma, \SL_{\pm}(n, \bR)) ${\rm )} containing the inclusion 
representation also acts on a properly convex $(n-1)$-dimensional open domain cocompactly
and properly discontinuously. 
\end{theorem}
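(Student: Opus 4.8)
The plan is to recast the statement as the assertion that the subset $\mathcal{C}\subset\Hom(\Gamma,\PGL(n,\bR))$ of representations $\rho$ such that $\rho(\Gamma)$ acts properly discontinuously and cocompactly on some properly convex open domain of $\rpnn^{n-1}$ meets the component $\mathcal{C}_0$ of $\Hom(\Gamma,\PGL(n,\bR))$ containing the inclusion $\iota$ in a subset that is both open and closed in $\mathcal{C}_0$. Since $\iota\in\mathcal{C}\cap\mathcal{C}_0\ne\emptyset$ and $\mathcal{C}_0$ is connected, this gives $\mathcal{C}_0\subseteq\mathcal{C}$, which is what is claimed; the $\SL_\pm(n,\bR)$ case is identical, or follows by lifting as in Remark~\ref{rem:SL} and Section~\ref{subsec-orp}. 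One should note at the outset that for every $\rho\in\mathcal{C}$ the group $\rho(\Gamma)$ divides a properly convex domain, so by Proposition~\ref{prop-Benoist} (with the standing hypothesis that the virtual center of $\Gamma$ is trivial) $\rho(\Gamma)$ is strongly irreducible with semisimple Zariski closure; this is the feature I would keep available along deformations.

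For openness, take $\rho\in\mathcal{C}\cap\mathcal{C}_0$ dividing a properly convex open $\Omega$, so that $M_\rho:=\Omega/\rho(\Gamma)$ is a compact properly convex real projective orbifold. By the Ehresmann--Thurston deformation theory for compact orbifolds (\cite{dgorb}, \cite{Canary}) every $\rho'$ near $\rho$ is the holonomy of a real projective structure on $M_\rho$ near the given one, and by Koszul's openness theorem \cite{Kos} (in the orbifold form due to Benoist) such a nearby structure is again convex, hence its developing map carries the universal cover isomorphically onto a convex domain $\Omega'$ on which $\rho'(\Gamma)$ acts cocompactly and properly discontinuously. Moreover $\Omega'$ is \emph{properly} convex: strong irreducibility is Zariski open, so $\rho'(\Gamma)$ remains irreducible near $\rho$, and then by Proposition~\ref{prop-projconv} the convex structure on $M_\rho$ cannot be improperly convex. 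Thus a neighborhood of $\rho$ lies in $\mathcal{C}$.

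For closedness, let $\rho_i\to\rho_\infty$ in $\mathcal{C}_0$ with $\rho_i\in\mathcal{C}$ dividing a properly convex open $\Omega_i\subset\rpnn^{n-1}$. Passing to a subsequence, $\{\overline{\Omega_i}\}$ converges in the compact Hausdorff topology on closed subsets of $\rpnn^{n-1}$ to a closed convex set $K_\infty$, and since $\rho_i(\gamma)\overline{\Omega_i}=\overline{\Omega_i}$ and $\rho_i(\gamma)\to\rho_\infty(\gamma)$ for each $\gamma\in\Gamma$, the set $K_\infty$ is $\rho_\infty(\Gamma)$-invariant. Doing the same for the dual domains $\Omega_i^{\ast}\subset\rpnn^{(n-1)\ast}$ (Vinberg duality), which are properly convex and divided by the contragredient representations $\rho_i^{\ast}\to\rho_\infty^{\ast}$, one obtains a $\rho_\infty^{\ast}(\Gamma)$-invariant closed convex set $K_\infty^{\ast}$. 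It then remains to prove the non-degeneration statement: $K_\infty$ has nonempty interior $\Omega_\infty:=K_\infty^o$, $\Omega_\infty$ is properly convex, and $\rho_\infty(\Gamma)$ acts on it properly discontinuously and cocompactly; granting this, $\rho_\infty\in\mathcal{C}$.

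This non-degeneration statement is where I expect the real difficulty to lie: a priori the limit could collapse into a lower-dimensional projective subspace, or $K_\infty$ could acquire a boundary segment joining antipodal points, or the action on $\Omega_\infty$ could fail to be cocompact. I would attack it in two steps. First, use the duality to show $K_\infty$ and $K_\infty^{\ast}$ cannot degenerate simultaneously, so at least one of $\rho_\infty,\rho_\infty^{\ast}$ preserves a full-dimensional properly convex open set; combining this with the strong irreducibility and semisimplicity of $\rho_i(\Gamma)$ (Proposition~\ref{prop-Benoist}), a property that one argues persists to $\rho_\infty$ along the component — via Benoist's positive-proximality/limit-cone description of the divided domain, which again uses that the virtual center of $\Gamma$ is trivial — forces both $K_\infty$ and $K_\infty^{\ast}$ to be full-dimensional and properly convex. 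Second, to upgrade ``invariant properly convex open set'' to ``divided cocompactly'' one needs a bound, uniform in $i$, on the Hilbert diameter of $\Omega_i/\rho_i(\Gamma)$; this is the step where staying in the component of the inclusion is genuinely used, and after it one chooses basepoints $x_i$ in fundamental domains of controlled diameter and invokes Lemma~\ref{lem:bdconv} together with the convergence of Hilbert metrics in \eqref{eqn:HiHa} to conclude cocompactness of the $\rho_\infty(\Gamma)$-action on $\Omega_\infty$; proper discontinuity is then automatic for a cocompact group of isometries of the proper geodesic metric space $(\Omega_\infty,d_{\Omega_\infty})$ acting with discrete orbits. So the hard part is precisely the uniform covolume/no-escape-to-infinity estimate; the geometric limits, the duality bookkeeping, and the transfer of the Hilbert metric are comparatively formal given Lemma~\ref{lem:bdconv}, Propositions~\ref{prop-projconv} and~\ref{prop-Benoist}, and \eqref{eqn:HiHa}.
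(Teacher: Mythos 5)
First, a point of order: the paper does not prove this statement at all --- it is quoted verbatim, with attribution, as Theorem 1.1 of Benoist \cite{Ben3} --- so your proposal can only be measured against Benoist's original argument. Your architecture is the right one: show that the set of representations dividing a properly convex open subset of $\rpnn^{n-1}$ is open and closed in $\Hom(\Gamma,\PGL(n,\bR))$, hence contains the whole component of the inclusion. The openness half is essentially correct and standard (Ehresmann--Thurston plus Koszul's openness theorem, with irreducibility --- an open condition --- combined with Proposition \ref{prop-projconv} to keep the convex structure properly convex).

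The gap is the closedness half, which is not a technical remainder but the entire content of Benoist's theorem (openness was known since Koszul), and your proposal stops exactly where that proof has to begin. Concretely: (a) you never establish that $\rho_\infty$ is discrete and faithful; the Hilbert-metric remark at the end only reduces proper discontinuity to discreteness of $\rho_\infty(\Gamma)$ in $\Aut(\Omega_\infty)$, it does not supply it, and this is one place where the trivial-virtual-center hypothesis does real work (via a Zassenhaus/Goldman--Millson type argument for limits of discrete faithful representations). (b) You assert that irreducibility ``persists to $\rho_\infty$ along the component,'' but irreducibility is a Zariski-open condition, not a closed one, so this is precisely what must be proved; Benoist's route is to show that the limit of irreducible positively proximal representations dividing convex sets is again irreducible and positively proximal --- positive proximality being the \emph{closed} condition that replaces your geometric-limit reasoning --- and then to invoke the characterization from ``Convexes divisibles I'' to produce the invariant properly convex open set. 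Your duality observation shows $K_\infty$ and $K_\infty^{\ast}$ cannot both degenerate, but does not exclude the case where one of them does. (c) The ``uniform bound on the Hilbert diameter of $\Omega_i/\rho_i(\Gamma)$'' is correctly identified as the crux of cocompactness, but no mechanism for it is offered; ruling out collapse or escape to infinity of the quotients along the sequence is again where the actual work lies (Benoist handles cocompactness of the limit action by a cohomological-dimension argument rather than by a uniform diameter estimate). In short, the proposal is a correct road map with the hard terrain circled but not crossed.
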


In general, a {\em join} of two convex sets $C_1$
and $C_2$ in an affine subspace $A$ of $\rpn$ is defined as 
\[ \{[t v_1 + (1-t) v_2]| v_i \in C_{C_i}, i= 1, 2, 0 \leq t \leq 1 \} \] 
where $C_{C_i}$ is a cone in $\bR^{n+1}$ corresponding to $C_i$, $i=1, 2$. 
The join is denoted by $C_1 * C_2$ in this paper. 

Given subspaces $V_1, \dots, V_m \subset \rpnn^n$ (resp.  $\subset \SI^n$)
that are from linear independent subspaces in $\bR^{n+1}$
and a subset $C_i \subset V_i$ for each $i$, 
we define a {\em strict join} of $m$ sets $C_1, \dots, C_m$
\[ C_1 * \cdots * C_m := \left\{ \left[ \sum_{i=1}^m t_i v_i \right] \, \bigg| \sum_{i=1}^m t_i = 1, 0 \leq  t_i \leq 1, v_i \in C_{C_i} \right\},\]
where $C_{C_i}$ is a cone in $\bR^{n+1}$ corresponding to $C_i$.  (Of course, this depends on the choices of $C_{C_i}$
up to $\mathcal{A}$.) 


A {\em cone-over} a strictly joined domain is one containing a strictly joined domain $A$ and 
is a union of segments from a cone-point $\not\in A$  to points of $A$.


\begin{proposition}[Theorem 1.1 of Benoist \cite{Ben3}] \label{prop-Ben2} Assume $n \geq 2$. 
Let $\Sigma$ be a closed $(n-1)$-dimensional properly convex projective orbifold 
and let $\Omega$ denote its universal cover in $\rpnn^{n-1}$ \rlp resp. in $\SI^{n-1}$\rrp.
Then 
\begin{itemize}
\item[(i)] $\Omega$ is projectively diffeomorphic to the interior of 
a strict join $K_1 * \cdots * K_{l_0}$ where $K_i$ is a properly convex open domain of dimension $n_i \geq 0$
corresponding to a convex open cone $C_i \subset \bR^{n_i+1}$. 
\item[(ii)] $\Omega$ is the image of the interior of $C_1 \oplus \cdots \oplus C_{l_{0}}$. 
\item[(iii)] The fundamental group 
$\pi_1(\Sigma)$ is virtually isomorphic to a cocompact subgroup of 
 $\bZ^{l_0-1} \times \Gamma_1 \times \cdots \times \Gamma_{l_0}$ for 
$l_0 -1 + \sum_{i=1}^{l_{0}} n_i = n$ with following properties\,{\rm :}
\begin{itemize}
\item Each $\Gamma_i$ has the property that each finite index subgroup has a trivial center. 
\item Each $\Gamma_j$ acts on $K_j^{o}$ cocompactly and the Zariski closure is 
a semisimple Lie group in $\PGL(n_j+1, \bR)$\, {\rm (}resp. in $\SL_\pm(n_j+1, \bR)${\rm )},
and acts trivially on $K_m$ for $m \ne j$. 
\item The subgroup corresponding to $\bZ^{l_0-1}$ acts trivially on each $K_j$.
\end{itemize} 
\end{itemize} 
\end{proposition}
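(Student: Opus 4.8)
The plan is to work at the level of the cone over $\Omega$, where a properly convex cone carries a \emph{canonical} direct--sum decomposition into indecomposable pieces; the dividing group is then forced to respect this decomposition, and the asserted product structure of $\pi_1(\Sigma)$ drops out. Let $C\subset\bR^n$ be a properly convex cone with $\bP(C)=\Omega$ (in the $\SI^{n-1}$ case take $\SI(C)=\Omega$, with no change below), and lift $\Gamma:=\pi_1(\Sigma)$ to a subgroup of $\SL_\pm(n,\bR)$ preserving $C$. First I would recall the convexity fact (Vinberg; see also \cite{Ben3}) that $C$ has a unique maximal expression $C=C_1\oplus\cdots\oplus C_{l_0}$ as a direct sum of \emph{indecomposable} properly convex cones $C_i\subset\bR^{n_i+1}$, $\sum_i(n_i+1)=n$: existence is an induction on dimension, and uniqueness holds because such a splitting is intrinsic to the face lattice and to the Vinberg affine geometry of $C$, hence coordinate--free. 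Putting $K_i:=\bP(C_i)$ gives $C^o=C_1^o\oplus\cdots\oplus C_{l_0}^o$ and $\Omega=\inte(K_1*\cdots*K_{l_0})$ as the image of $\inte(C_1\oplus\cdots\oplus C_{l_0})$, which is (i) and (ii).

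\textbf{Product structure of the group.} Because the decomposition $C=\bigoplus_iC_i$ is canonical, every element of $\Aut(C)$ permutes $\{C_1,\dots,C_{l_0}\}$. Passing to the finite--index subgroup $\Gamma_0\le\Gamma$ fixing each $C_i$ (index at most $l_0!$; by Selberg's lemma we may also take $\Gamma_0$ torsion--free) gives a faithful embedding $\Gamma_0\hookrightarrow\prod_{i=1}^{l_0}\Aut(C_i)$. Here I would invoke Benoist's description of the automorphism group of an indecomposable divisible cone: its identity component is reductive with one--dimensional centre, the homotheties. Hence, up to finite index, $\Aut(C)\cap\prod_i\GL(C_i)$ (the stabilizer of the splitting) maps to $\prod_i\PGL(n_i+1,\bR)$ with kernel $A=\{(\lambda_1\mathrm{Id},\dots,\lambda_{l_0}\mathrm{Id}):\prod_i\lambda_i^{n_i+1}=1\}\cong\bR^{l_0-1}$ acting trivially on each $K_j$, and the image of $\Gamma_0$ in the $i$--th $\PGL(n_i+1,\bR)$ is a group $\Gamma_i$ acting on $K_i^o$ (and trivially on $K_m$, $m\neq i$).

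\textbf{Cocompactness and the factors.} It remains to see that $\Gamma_0$ is a \emph{cocompact} lattice in $A\times\Gamma_1\times\cdots\times\Gamma_{l_0}\cong\bR^{l_0-1}\times\Gamma_1\times\cdots\times\Gamma_{l_0}$, and in particular that each $\Gamma_j$ divides $K_j^o$. For this I would use that the Hilbert metric of the strict join $K_1*\cdots*K_{l_0}$ is $\Gamma_0$--equivariantly quasi--isometric to the product of the Hilbert metric of the open $(l_0-1)$--simplex of barycentric weights --- which is isometric to a normed $\bR^{l_0-1}$ --- with the Hilbert metrics of $K_1,\dots,K_{l_0}$; since $\Gamma_0$ acts cocompactly on $\Omega$ by transformations aligned with this decomposition (block--diagonal plus homothety), an induction on $l_0$ shows each projection is cocompact and properly discontinuous and that the $\bR^{l_0-1}$--directions are spanned, so the abelian part of $\Gamma_0$ is virtually $\bZ^{l_0-1}$. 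Finally, for the centre statement in (iii) with $n_j\ge1$, I would combine Proposition \ref{prop-Benoist} with the fact, also due to Benoist \cite{Ben3} (and compatible with the proper--convexity dichotomy of Proposition \ref{prop-projconv}), that a group dividing a properly convex cone has trivial centre in every finite--index subgroup exactly when that cone is indecomposable; each $C_j$ is indecomposable by construction, so this is immediate. (When $n_j=0$, $\Gamma_j$ is trivial, consistently with the simplex, where $l_0=n$, all $n_i=0$, and $\pi_1(\Sigma)$ is virtually $\bZ^{n-1}$.)

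\textbf{Main obstacle.} The substantive input --- which in a genuine write--up I would quote rather than reprove --- is the structure theory behind two of the steps above: the canonicity of the direct--sum decomposition of a properly convex cone, and the reductivity with one--dimensional centre of the automorphism group of an indecomposable divisible cone. Both rest on Vinberg's theory of convex cones and Benoist's analysis in \cite{Ben3}, which also supplies the equivalence invoked for (iii). The secondary delicate point is cocompactness of the projections: the image of a cocompact lattice in a product need not be cocompact in a factor in general, so one must exploit the precise (virtual) product structure of $\Aut(C)$ together with the metric splitting of the Hilbert geometry of the join.
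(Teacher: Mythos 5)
The paper offers no proof of this proposition: it is quoted verbatim, with attribution, as a theorem of Benoist \cite{Ben3}, so there is no internal argument to measure yours against. That said, your outline does follow the strategy of Benoist's actual proof --- pass to the cone, split it into indecomposable summands, show the dividing group virtually preserves the splitting, extract the $\bZ^{l_0-1}$ factor from the block-scalar kernel, and prove cocompactness of the factor actions via the product structure of the Hilbert metric of a join --- and you correctly flag the two substantive structural inputs you are importing from Vinberg and Benoist. One small point in your favour: your dimension count $\sum_i(n_i+1)=n$ gives $l_0-1+\sum_i n_i=n-1$ for an $(n-1)$-dimensional $\Omega$, which is the correct relation; the ``$=n$'' in the statement as printed appears to be an off-by-one slip.

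Two cautions if this were written out in full. First, your route to the trivial-virtual-centre property of the $\Gamma_j$ is essentially circular: the equivalence between indecomposability of a divisible factor and triviality of the virtual centre of the group dividing it is the heart of Benoist's theorem (it is what ties the purely geometric decomposition of the cone to the group-theoretic product structure), not a consequence of Proposition \ref{prop-Benoist}, which only records equivalent conditions for a group already known to act cocompactly on the factor. Second, the cocompactness of each $\Gamma_j$ on $K_j^o$ and the identification of $\Gamma_0\cap A$ as a lattice $\bZ^{l_0-1}$ in $A\cong\bR^{l_0-1}$ are exactly where the real work lies; ``an induction on $l_0$ shows each projection is cocompact'' is the step that would actually have to be carried out, using discreteness of $\Gamma_0$ and the fact that the $A$-orbits sweep out the simplex directions of the join. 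As a reconstruction of a black-box citation your sketch is sound; as a self-contained proof it defers precisely the hard parts to the reference --- much as the paper itself does.
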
 

\subsection{The duality} \label{sec-dualty} 
We starts from linear duality. Let us choose the origin $O$ in $\bR^{n+1}$. 
Let $\Gamma$ be a group of linear transformations $\GL(n+1, \bR)$. 
Let $\Gamma^*$ be the {\em affine dual group} defined by $\{g^{\ast -1}| g \in \Gamma \}$ acting on 
the dual space $\bR^{n+1\ast}$. 
Suppose that $\Gamma$ acts on a properly convex cone $C$ in $\bR^{n+1}$ with the vertex $O$.

An open convex cone  $C^*$ in $\bR^{n+1, *}$  is {\em dual} to an open convex cone $C $ in $\bR^{n+1}$  if 
$C^* \subset \bR^{n+1 \ast}$ equals 
\[ \big\{\phi \in \bR^{n+1 \ast}\big|\, \phi| \clo(C) -\{O\} > 0 \big\}. \]
$C^*$ is a cone with vertex as the origin again. Note $(C^*)^* = C$. 

Let $\bR_{+}$ denote the set of positive real numbers. 
Now $\Gamma^*$ acts on $C^*$. Also, if $\Gamma$ acts cocompactly on $C$ if and only if $\Gamma^*$ acts on $C^*$ cocompactly. 
A {\em central dilatation extension} $\Gamma'$ of $\Gamma$ is the subgroup of $\GL(n+1, \bR)$ generated by $\Gamma$ and a dilatation
$s\Idd$  by a scalar $s \in \bR_+ -\{1\}$ \index{central dilatation extension} 
with the fixed $O$.
The dual of $\Gamma'$ is a central dilatation extension of $\Gamma^*$. 

 Given a subgroup $\Gamma$ in $\Pgl$, an {\em affine lift} in $\GL(n+1, \bR)$ is any subgroup that maps to $\Gamma$ isomorphically
 under the projection. 
 Given a subgroup $\Gamma$ in $\Pgl$, the dual group $\Gamma^*$ is the image in $\Pgl$ of the dual of 
 any affine lift of $\Gamma$. 

A properly convex open domain $\Omega$ in $\bP(\bR^{n+1})$ (resp. in $\SI(\bR^{n+1})$) is {\em dual} to a properly convex open domain
$\Omega^*$ in $\bP(\bR^{n+1 \ast})$ (resp. in $\SI(\bR^{n+1 \ast})$)  if $\Omega$ corresponds to an open convex cone $C$ 
and $\Omega^*$ to its dual $C^*$. We say that $\Omega^*$ is dual to $\Omega$. 
We also have $(\Omega^*)^* = \Omega$ and $\Omega$ is properly convex if and only if so is $\Omega^*$.  \index{dual!domain}

We call $\Gamma$ a {\em dividing group} if a central dilatational extension acts cocompactly on $C$. \index{divisible group}
$\Gamma$ is dividing if and only if so is $\Gamma^*$. (See \cite{Vin} and \cite{Ben1}). 

\begin{theorem}[Vinberg \cite{Vin3}] \label{thm-Vinberg} 
Let $\orb$ be a properly convex real projective orbifold of form $\Omega/\Gamma$. 
Let $\orb^{\ast}=\Omega^{\ast}/\Gamma^{\ast}$ be a properly convex real projective orbifold. 
Then $\orb^{\ast}$ is diffeomorphic to $\orb$. 
\end{theorem}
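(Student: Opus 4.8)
The plan is to produce an explicit $\Gamma$-equivariant diffeomorphism between the universal covers $\Omega$ and $\Omega^{\ast}$ by means of the Koszul--Vinberg characteristic function, and then descend it to the quotient orbifolds. First I would pass to the linear picture: write $\Omega = \bP(C)$ and $\Omega^{\ast} = \bP(C^{\ast})$ for dual properly convex open cones $C, C^{\ast} \subset \bR^{n+1}$, and pick an affine lift $\Gamma'$ of $\Gamma$ acting on $C$, so that its affine dual $(\Gamma')^{\ast} = \{g^{\ast-1}\mid g\in \Gamma'\}$ acts on $C^{\ast}$ and projects to $\Gamma^{\ast}$. On $C$ define
\[ \phi_C(x) = \int_{C^{\ast}} e^{-\xi(x)}\, d\xi, \]
which converges exactly because $C$ is properly convex (so the slices $\{\xi \in \clo(C^{\ast})\mid \xi(x) = 1\}$ are compact for $x \in C$). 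A change of variables gives the two transformation laws $\phi_C(gx) = |\det g|^{-1}\phi_C(x)$ for $g \in \GL(n+1,\bR)$ preserving $C$, and $\phi_C(tx) = t^{-(n+1)}\phi_C(x)$ for $t > 0$.

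Next I would study the gradient map $\psi_C := -\nabla \log \phi_C : C \to \bR^{n+1 \ast}$. Differentiating under the integral shows $\psi_C(x)$ equals the barycenter $\big(\int_{C^{\ast}} \xi\, e^{-\xi(x)}\, d\xi\big)\big/\big(\int_{C^{\ast}} e^{-\xi(x)}\, d\xi\big)$, so it lands in $C^{\ast}$. The analytic input here is Vinberg's theorem that $\log\phi_C$ is smooth and strictly convex on $C$ with positive-definite Hessian; hence $\psi_C$ is a local diffeomorphism and is injective, and, by a further standard argument (properness of $\psi_C$ as a map $C \to C^{\ast}$ together with simple connectivity of $C^{\ast}$), it is a diffeomorphism of $C$ onto $C^{\ast}$. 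Differentiating the first transformation law yields the equivariance $\psi_C(gx) = g^{\ast-1}\psi_C(x)$, and differentiating the second yields $\psi_C(tx) = t^{-1}\psi_C(x)$ for $t > 0$.

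Finally, since the inversion $t \mapsto t^{-1}$ is a diffeomorphism of $\bR_+$, the scaling relation lets $\psi_C$ descend to a diffeomorphism $\bar\psi : \Omega = \bP(C) \to \bP(C^{\ast}) = \Omega^{\ast}$ (and equally on the $\SI^n$-level if preferred), while the equivariance relation descends to $\bar\psi \circ \gamma = \gamma^{\ast}\circ \bar\psi$ for corresponding $\gamma \in \Gamma$, $\gamma^{\ast} \in \Gamma^{\ast}$. Since $\Gamma$ acts properly discontinuously on $\Omega$ and $\Gamma^{\ast}$ on $\Omega^{\ast}$ (both quotients being orbifolds by hypothesis), the equivariant diffeomorphism $\bar\psi$ induces a diffeomorphism $\orb = \Omega/\Gamma \to \Omega^{\ast}/\Gamma^{\ast} = \orb^{\ast}$, which is the desired conclusion.

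The main obstacle is the analytic core: smoothness of $\phi_C$, strict convexity of $\log\phi_C$, and bijectivity of $\psi_C : C \to C^{\ast}$ — these are precisely Vinberg's results on the characteristic function of a convex cone, which in a survey one would simply quote from \cite{Vin3}. The remainder — the two transformation laws, the descent to projective (or spherical) space via the scaling relation, and the descent to the orbifold quotient via equivariance and proper discontinuity — is routine bookkeeping.
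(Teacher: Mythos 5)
Your argument is correct and is precisely the standard Vinberg characteristic-function proof that the paper itself only cites (to \cite{Vin3}) without reproducing: the gradient map $\psi_C=-\nabla\log\phi_C$ gives the equivariant, degree $-1$ homogeneous diffeomorphism $C\to C^*$, which descends to $\Omega\to\Omega^*$ and then to the quotient orbifolds. No gaps; the analytic core (smoothness, strict convexity, bijectivity of $\psi_C$) is correctly isolated as the part to quote from Vinberg, and the equivariance and descent bookkeeping is right.
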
 


Given a convex domain $\Omega$ in an affine subspace $A \subset \bR^{n}$, 
a {\em supporting hyperplane} $h$ at a point $x \in \Bd \Omega$ is 
a hyperplane so that a component of $A - h$ contains the interior of $\Omega$. 

A hyperspace is an element of $\rpnn^{n \ast}$ since it is represented as a $1$-form, 
and an element of $\rpnn^n$ can be considered as a hyperspace in $\rpnn^{n \ast}$. 
The following definition applies to $\Omega \subset \rpn$ (resp. $\SI(\bR^{n+1 \ast})$)  
and $\Omega^* \subset \rpnn^{n \ast}$ (resp. $\SI(\bR^{n+1\ast})$).
Given a properly convex domain $\Omega$, we define the {\em augmented boundary} of $\Omega$ \index{augmented boundary} 
\[\Bd^{\Ag} \Omega  := \big\{ (x, h) \big| x \in \Bd \Omega, h \hbox{ is a supporting hyperplane of } \Omega,  h \ni x \big\} .\] 

\begin{remark} 
For open properly convex domains $\Omega_1$ and $\Omega_2$, 
we have 
\begin{equation}\label{eqn:dualinc}
\Omega_1 \subset \Omega_2 \hbox{ if and only if } \Omega_2^* \subset \Omega_1^*. 
\end{equation}
\end{remark}

The following standard results are proved in Section 3 of \cite{End1}. 
We will call the homeomorphism below as the {\em duality map}. \index{duality map}
\begin{proposition} \label{prop-duality}
Suppose that $\Omega \subset \rpn$\, \rlp resp. $\SI(\bR^{n+1\ast})$\rrp \, and 
its dual $\Omega^* \subset \rpnn^{n \ast}$\, \rlp resp. $\SI(\bR^{n+1\ast})$\rrp \,
are properly convex domains. 
\begin{itemize}  
\item[(i)] There is a proper quotient map $\Pi_{\Ag}: \Bd^{\Ag} \Omega \ra \Bd \Omega$
given by sending $(x, h)$ to $x$. 
\item[(ii)] Let $\Gamma$ act on properly discontinuously $\Omega$ if and only if so acts
$\Gamma^*$ on $\Omega^*$.
\item[(iii)] There exists a homeomorphism 
\[ {\mathcal{D}}: \Bd^{\Ag} \Omega \leftrightarrow \Bd^{\Ag} \Omega^* \] 
given by sending $(x, h)$ to $(h, x)$. 
\item[(iv)] Let $A \subset \Bd^{\Ag} \Omega$ be a subspace and $A^*\subset \Bd^{\Ag} \Omega^*$
be the corresponding dual subspace $\mathcal{D}(A)$. If a group $\Gamma$ acts properly discontinuously on $A$ 
if and only if $\Gamma^*$ so acts on $A^*$. 
\end{itemize} 
\end{proposition}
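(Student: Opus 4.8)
\textbf{Proof plan for Proposition \ref{prop-duality}.}

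The plan is to establish the four items in the natural logical order, building each on the previous ones, using the linear-duality dictionary between an open properly convex cone $C \subset \bR^{n+1}$ and its dual cone $C^* \subset \bR^{n+1\ast}$ set up just above. First I would fix affine lifts: choose cones $C, C^*$ representing $\Omega, \Omega^*$, so that a boundary point $x \in \Bd\Omega$ corresponds to a ray $\bR_+ \hat x \subset \Bd C$, and a supporting hyperplane $h$ of $\Omega$ at $x$ corresponds to a ray $\bR_+\hat h \subset \Bd C^*$ with $\hat h(\hat x) = 0$ and $\hat h \geq 0$ on $C$. The key elementary fact, which I would record first, is that $\hat h(\hat x)=0$ with $\hat h$ a supporting functional of $C$ forces $\hat x$ to lie on the null hyperplane of $\hat h$, i.e.\ $(h,x) \in \Bd^{\Ag}\Omega^*$ when viewed in the dual; this bi-directional statement is exactly what makes ${\mathcal D}$ in (iii) well-defined and an involution.

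For (i), the map $\Pi_{\Ag}\colon (x,h) \mapsto x$ is visibly continuous and surjective since every boundary point of a convex domain has at least one supporting hyperplane; properness follows because $\Bd^{\Ag}\Omega$ is a closed subset of the compact set $\Bd\Omega \times \rpnn^{n\ast}$ (the defining conditions $x\in h$ and ``$h$ supports $\Omega$'' are closed), hence $\Pi_{\Ag}$ is a proper map between locally compact Hausdorff spaces, indeed a closed map with compact fibers. For (ii), the equivalence of proper discontinuity of $\Gamma$ on $\Omega$ and of $\Gamma^*$ on $\Omega^*$ is standard: the Hilbert metric $d_\Omega$ is $\Gamma$-invariant and the correspondence $\Omega \leftrightarrow \Omega^*$ is $\Gamma$-equivariant up to the duality anti-automorphism $g \mapsto g^{\ast -1}$, and a discrete group acting on a properly convex domain by isometries of its Hilbert metric acts properly discontinuously; alternatively one cites \cite{Vin} / \cite{Ben1} as already signposted. (This is the item where I would lean hardest on cited literature rather than reprove.)

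For (iii), I would check that ${\mathcal D}\colon (x,h)\mapsto (h,x)$ actually lands in $\Bd^{\Ag}\Omega^*$: given $(x,h)\in\Bd^{\Ag}\Omega$, the point $h$ (regarded as a point of $\rpnn^{n\ast}$) lies in $\Bd\Omega^*$ because $\hat h$ is a nonzero supporting functional of $C$ hence $\bR_+\hat h\subset \Bd C^*$; and $x$ (regarded as a hyperplane in $\rpnn^{n\ast}$) supports $\Omega^*$ at $h$ because every $\hat\phi\in C^*$ satisfies $\hat\phi(\hat x)>0$ while $\hat h(\hat x)=0$. Symmetry of the construction, together with $(C^*)^*=C$, gives that ${\mathcal D}$ composed with the analogous map $\Bd^{\Ag}\Omega^* \to \Bd^{\Ag}\Omega$ is the identity, so ${\mathcal D}$ is a continuous bijection with continuous inverse, i.e.\ a homeomorphism; continuity in both directions is immediate since ${\mathcal D}$ is the restriction of the coordinate-swap homeomorphism of $\rpnn^n\times\rpnn^{n\ast}$ with $\rpnn^{n\ast}\times\rpnn^n$. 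Finally (iv) is a formal consequence of (ii) and (iii): ${\mathcal D}$ is an equivariant homeomorphism intertwining the $\Gamma$-action on subspaces of $\Bd^{\Ag}\Omega$ with the $\Gamma^*$-action on subspaces of $\Bd^{\Ag}\Omega^*$ (here using that $g\in\Gamma$ acts on pairs $(x,h)$ by $(gx, (g^{\ast-1})^{\ast}h) = (gx, g h)$ in the appropriate sense), so proper discontinuity transfers through the homeomorphism, and one restricts to the invariant pair $A \leftrightarrow A^* = {\mathcal D}(A)$.

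The main obstacle I anticipate is not any single deep step but bookkeeping: making the identifications between points/hyperplanes in $\rpnn^n$ and in $\rpnn^{n\ast}$ consistent with the choice of affine lifts and with the $\Gamma$- versus $\Gamma^*$-actions, so that ``$(x,h)\mapsto(h,x)$'' is literally a well-defined map and literally equivariant. Once the convention ``a hyperplane is a point of the dual and vice versa'' is pinned down compatibly with $(C^*)^*=C$, everything else is the soft topology of closed subsets of compact spaces plus the cited proper-discontinuity facts.
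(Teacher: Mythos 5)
Your plan is correct: the paper itself gives no in-text proof of Proposition \ref{prop-duality} (it labels these ``standard results'' and defers to Section 3 of \cite{End1}), and your linear-duality argument --- lifting to the cones $C$, $C^*$, checking that $\hat h \ge 0$ on $\clo(C)$ with $\hat h(\hat x)=0$ places $\hat h$ in $\Bd C^*$ with $\hat x$ as a supporting functional, deducing (iii) from $(C^*)^* = C$ and the coordinate swap, getting (i) from compactness of the closed subset $\Bd^{\Ag}\Omega \subset \Bd\Omega \times \rpnn^{n\ast}$, and (ii), (iv) from discreteness/equivariance under $g \mapsto g^{\ast -1}$ --- is exactly the standard route the cited reference takes. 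No gaps; the only care needed is the bookkeeping of identifications you already flag.
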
 

Given a convex domain $\Omega$, we denote by $R_p(\Omega)$ the space of directions of 
open rays in $\Omega$ from $p$ in $\SI^{n-1}_p$.  

\begin{proposition}\label{prop-example}
Let $\Omega^*$ be the dual of a properly convex domain $\Omega$.
Then 
\begin{itemize}
\item[(i)] $\Bd \Omega$ is $C^1$ and strictly convex if and only if $\Bd \Omega^*$  is $C^1$ and strictly convex.
\item[(ii)] $\Omega$ is a horospherical orbifold if and only if so is $\Omega^*$. 
\item[(iii)] Let $p \in \Bd \Omega$. Then
$\mathcal{D}$ sends  in a one-to-one and onto manner
\[\{(p, h)| h \hbox{ is a supporting hyperplane of $\Omega$ at $p$}\}\]
to $\{(h^*, p^*)| h^* \in D = p^* \cap \Bd \Omega^* \}$ where $D$ is a properly convex set in $\Bd \Omega$.   
\item[(iv)] $\Bd \Omega^*$ contains a properly convex domain $D = P \cap \Bd \Omega^*$ open in a totally geodesic hyperplane $P$ 
if and only if $\Bd \Omega$ contains 
a vertex $p$ with $R_p(\Omega)$ a properly convex domain. Moreover, $D^o$ and $R_p(\Omega)$ are properly
convex and are projectively diffeomorphic to dual domains in $\rpnn^{n-1}$. 
\end{itemize}
\end{proposition}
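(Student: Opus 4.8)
The plan is to derive all four statements from the duality homeomorphism $\mathcal{D}\colon \Bd^{\Ag}\Omega \leftrightarrow \Bd^{\Ag}\Omega^*$ of Proposition~\ref{prop-duality}(iii), together with the elementary order-reversing correspondence \eqref{eqn:dualinc} for inclusions of properly convex domains and its infinitesimal consequences. The underlying principle is that $\mathcal{D}$ swaps the roles of points and supporting hyperplanes: a point $x\in\Bd\Omega$ with its set of supporting hyperplanes corresponds to a supporting hyperplane $x^*$ of $\Omega^*$ together with the set of its points of contact. Thus ``thin'' features of $\Bd\Omega$ at a point (a unique supporting hyperplane, i.e.\ $C^1$-ness) correspond to ``thin'' features of $\Bd\Omega^*$ along a hyperplane section (a single contact point, i.e.\ strict convexity), and conversely. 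I would organize the proof so that (iii) is established first as the core computation, and then (i), (ii), (iv) follow as readings of (iii).

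For (iii): fix $p\in\Bd\Omega$. The fiber $\Pi_{\Ag}^{-1}(p)$ is exactly $\{(p,h): h \text{ supporting at } p\}$, and $\mathcal{D}$ sends it bijectively and homeomorphically onto $\{(h,p): h \ni p, h \text{ supporting}\}\subset \Bd^{\Ag}\Omega^*$, where now the first coordinate $h^*$ ranges over points of $\Bd\Omega^*$ and the second coordinate $p^*$ is the fixed supporting hyperplane of $\Omega^*$ dual to $p$. The set of such $h^*$ is precisely $p^*\cap\Bd\Omega^*$; one checks this is a properly convex subset because $p^*$ is a supporting hyperplane and $\Omega^*$ is properly convex, so its intersection with a supporting hyperplane is a properly convex face. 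Dually $D\subset\Bd\Omega$ in the statement is obtained by the same construction with roles reversed, and $\mathcal{D}$ and its inverse match these up. This is essentially bookkeeping with the definitions already set up in Section~3 of \cite{End1}.

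For (i): $\Bd\Omega$ is $C^1$ at $p$ iff the supporting hyperplane at $p$ is unique, i.e.\ iff $\Pi_{\Ag}^{-1}(p)$ is a single point; by (iii) this holds iff $p^*\cap\Bd\Omega^*$ is a single point, i.e.\ iff $p^*$ is a supporting hyperplane meeting $\Bd\Omega^*$ in exactly one point. Ranging over all $p\in\Bd\Omega$ and using that $p\mapsto p^*$ surjects onto the set of supporting hyperplanes of $\Omega^*$, the condition ``$\Bd\Omega$ is $C^1$ everywhere'' becomes ``every supporting hyperplane of $\Omega^*$ touches at one point'', which is strict convexity of $\Bd\Omega^*$; and the symmetric argument gives the other implication, so $\Bd\Omega$ is $C^1$ and strictly convex iff $\Bd\Omega^*$ is. For (ii): a horoball is characterized, among properly convex domains, by having a $C^1$ and strictly convex boundary with a single point $p$ at which the stabilizer acts with the horospherical (parabolic, unipotent) behavior; applying (i) and noting by Proposition~\ref{prop-duality}(ii),(iv) that the group actions and the distinguished boundary point are carried over by duality, $\Omega$ is horospherical iff $\Omega^*$ is. For (iv): $\Bd\Omega$ contains a vertex $p$ with $R_p(\Omega)$ properly convex means the cone of rays into $\Omega$ at $p$ is properly convex of full dimension; the supporting hyperplanes at such $p$ form an $(n-1)$-dimensional properly convex family, which under (iii) is exactly a properly convex domain $D=P\cap\Bd\Omega^*$ open in a hyperplane $P$; the projective diffeomorphism $R_p(\Omega)\cong D^o$ (to dual domains in $\rpnn^{n-1}$) comes from writing both as projectivizations of the same pair of dual cones, namely the tangent cone of $\Omega$ at $p$ and its dual, restricted to the appropriate codimension-one subspaces.

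The main obstacle I anticipate is (iv) — specifically, producing the explicit projective identification $R_p(\Omega)\cong D^o$ as \emph{dual} $(n-1)$-dimensional domains, rather than merely homeomorphic ones. This requires choosing complementary affine/linear coordinates adapted to the vertex $p$ and to the hyperplane $P$, checking that the cone of directions $R_p(\Omega)$ in $\SI^{n-1}_p$ and the face $D=P\cap\Bd\Omega^*$ are projectivizations of cones dual to one another in a common $n$-dimensional linear space, and verifying this identification is equivariant. The statements (i)--(iii) I expect to be essentially formal once the definitions are unwound; (ii) also needs the small observation that the horospherical characterization is preserved, which is immediate from Proposition~\ref{prop-duality} plus (i).
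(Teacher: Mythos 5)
Your overall strategy is the standard one and is exactly what the paper relies on: the paper gives no proof here but delegates to the duality map $\mathcal{D}$ of Proposition \ref{prop-duality} (Section 3 of \cite{End1}), and your reading of $\mathcal{D}$ as exchanging the fiber $\Pi_{\Ag}^{-1}(p)$ with the exposed face $p^*\cap\Bd\Omega^*$ is the right core computation; parts (i), (iii) and (iv) are handled correctly, including the identification in (iv) of $R_p(\Omega)$ and $D^o$ as projectivizations of the dual pair of cones living in $\bR^{n+1}/\langle\tilde p\rangle$ and $\tilde p^{\perp}\subset\bR^{n+1\ast}$. The one soft spot is (ii): you invoke a \emph{characterization} of horoballs among properly convex domains by $C^1$ strictly convex boundary plus a parabolic fixed point. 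Taken literally that is either circular or requires a nontrivial rigidity theorem (of the type proved in \cite{endclass}), and it is not needed. With the paper's definition, a horoball orbifold is $U/\Gamma_p$ for $U$ an explicit ellipsoid ball minus a boundary point and $\Gamma_p\subset\PO(1,n)$, so (ii) reduces to the elementary facts that the cone $\{B>0,\,x_0>0\}$ for a signature-$(1,n)$ form $B$ is carried to its own dual cone by the isomorphism $\bR^{n+1}\to\bR^{n+1\ast}$ induced by $B$, that this isomorphism conjugates $\PO(1,n)$ to its affine dual group, and that the deleted vertex $p$ goes to the tangent hyperplane $p^*$, which is the vertex of the dual horoball. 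Replacing your characterization argument by this direct computation closes the only gap; everything else is essentially the intended proof.
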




\section{The end theory} \label{sec-endt}

We will now discuss in detail the end theory.  The following is simply the notions useful in 
relative hyperbolic group theory as can be found in Bowditch \cite{Bowditch}.

\subsection{ p-ends, p-end neighborhoods, and p-end fundamental groups } \label{sub-pend}

Let $\orb$ be a real projective orbifold with the universal cover $\torb$ and the covering map $p_{\torb}$. 
Each end neighborhood $U$, diffeomorphic to $S_{E} \times (0, 1)$, 
of an end $E$ lifts to a connected open set 
$\tilde U$ in $\torb$. 
A subgroup $\Gamma_{\tilde U}$ of $\Gamma$ acts on $\tilde U$ where 
\[p_{\torb}^{-1}(U) = \bigcup_{g\in \pi_1(\orb)} g(\tilde U).\] 
Each component
$\tilde U$ is said to be a {\em proper pseudo-end neighborhood}. 
\begin{itemize} 
\item An {\em exiting sequence} of sets $U_{1}, U_{2}, \cdots $ in $\torb$ is 
a sequence so that for each compact subset $K$ of $\orb$
there exists an integer $N$ satisfying $p_{\torb}^{-1}(K) \cap U_i = \emp$ for $i > N$.  
\item A {\em pseudo-end sequence} is an exiting sequence of proper pseudo-end neighborhoods 
\[\{U_{i}|i=1, 2, 3, \dots\}, \hbox{ where } U_{i+1} \subset U_{i} \hbox{ for every } i.\]
\item Two pseudo-end sequences $\{ U_{i}\}$ and $\{V_{j}\}$ are {\em compatible} if 
for each $i$, there exists $J$ such that $ V_{j} \subset U_{i}$ for every $j$, $j > J$
and conversely for each $j$, there exists $I$ such that $U_{i} \subset V_{j}$ for every $i$,  $i > I$. 
\item A compatibility class of a proper pseudo-end sequence is called a \hypertarget{term-pend}{{\em pseudo-end}} of $\torb$.
Each of these corresponds to an end of $\orb$ under the universal covering map $p_{\orb}$.
\item For a pseudo-end $\tilde E$ of $\torb$, we denote by $\Gamma_{\tilde E}$ the subgroup $\Gamma_{\tilde U}$ where 
$U$ and $\tilde U$ is as above. We call $\Gamma_{\tilde E}$ a \hypertarget{term-pendfund}{{\em pseudo-end fundamental group}}.
We will also denote it by $\pi_{1}(\tilde E)$. 
\item \hypertarget{term-pendnhbd}{A {\em pseudo-end neighborhood }} $U$ of a pseudo-end $\tilde E$ is a $\Gamma_{\tilde E}$-invariant open set containing 
a proper pseudo-end neighborhood of $\tilde E$. 
A proper pseudo-end neighborhood is an example. 
\end{itemize}
\hypertarget{term-p}{(From now on, we will replace ``pseudo-end'' with the abbreviation ``p-end''.) }

\begin{proposition} \label{prop:endf} 
The p-end fundamental group $\Gamma_{\tilde E}$ is independent of the choice of $U$. 
\end{proposition}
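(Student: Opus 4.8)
The plan is to show that any two proper pseudo-end neighborhoods $\tilde U_1, \tilde U_2$ of the same p-end $\tilde E$ give rise to the same subgroup $\Gamma_{\tilde U_1}=\Gamma_{\tilde U_2}$ of $\Gamma=\pi_1(\orb)$, where $\Gamma_{\tilde U_i}=\{g\in\Gamma\mid g(\tilde U_i)=\tilde U_i\}$. First I would recall that a p-end is a compatibility class of exiting pseudo-end sequences, so if $\tilde U_1$ and $\tilde U_2$ both represent $\tilde E$ then, using compatibility, one can find a nested chain with $\tilde U_1\supset g_0(\tilde U_2)$ for a suitable $g_0\in\Gamma$ and $g_0(\tilde U_2)\supset h(\tilde U_1)$ for a suitable $h\in\Gamma$, all sitting inside a single exiting sequence; in fact it suffices to treat the case of two \emph{nested} proper p-end neighborhoods $\tilde V\subset \tilde U$ of $\tilde E$ and prove $\Gamma_{\tilde V}=\Gamma_{\tilde U}$, since the general case reduces to this by interposing a common refinement.

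So the core step is: given proper p-end neighborhoods $\tilde V\subset\tilde U$ of $\tilde E$, show $\Gamma_{\tilde V}=\Gamma_{\tilde U}$. The inclusion $\Gamma_{\tilde U}\subset\Gamma_{\tilde V}$ is the substantive one (the reverse is by a symmetric argument once one knows both are, up to conjugacy, the image of $\pi_1(S_E)$). Here I would use the product structure: each $\tilde U$ covers an end neighborhood $U\cong S_E\times(0,1)$ and $\Gamma_{\tilde U}$ is precisely the image of $\pi_1(S_E)\hookrightarrow\pi_1(\orb)$ determined by the inclusion $U\hookrightarrow\orb$ and the lift $\tilde U$. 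Concretely, if $g\in\Gamma_{\tilde U}$, then $g$ descends to a deck transformation of the cover $\tilde U\to U$; since $U$ deformation retracts to a fiber $S_E$, and $\tilde V\subset\tilde U$ is itself a connected cover of some smaller product neighborhood $V\cong S_E\times(0,1)$ that is isotopic within $U$ to $S_E\times\{t\}$, the subgroup of $\Gamma_{\tilde U}$ stabilizing $\tilde V$ is all of $\Gamma_{\tilde U}$: one shows $g(\tilde V)$ is again a connected lift of $V$ meeting $\tilde V$ (they both exit toward $\tilde E$ and lie in $\tilde U$ on which $g$ acts), hence $g(\tilde V)=\tilde V$ by uniqueness of the lift through a basepoint, so $g\in\Gamma_{\tilde V}$.

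The key technical point I would isolate as a lemma: two proper p-end neighborhoods of the \emph{same} p-end that are both contained in a third one must coincide whenever they are related by an element of $\Gamma$ that already preserves the larger one — equivalently, distinct $\Gamma$-translates of a proper p-end neighborhood of $\tilde E$ that lie in a common exiting sequence are disjoint (this is where the ``exiting'' and properness hypotheses, together with discreteness of the $\Gamma$-action, are used, paralleling the standard fact in relative hyperbolic / geometrically finite group theory as in Bowditch \cite{Bowditch}). Once that is in hand, $\Gamma_{\tilde U}$ acts on the set of $\Gamma$-translates of $\tilde V$ inside $\tilde U$, this set is a single point $\{\tilde V\}$, so $\Gamma_{\tilde U}\subset\Gamma_{\tilde V}$; running the same argument with the roles reversed (choosing an even smaller common neighborhood) gives equality.

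The main obstacle I expect is bookkeeping around the lift-through-basepoint uniqueness: one must make sure that the various proper p-end neighborhoods can be simultaneously chosen to be connected covers of genuine product end neighborhoods $S_E\times(0,1)$ with compatible basepoints, so that the fundamental-group image is literally the same subgroup and not merely a conjugate. This is handled by fixing, once and for all, a basepoint $\tilde x$ lying in the intersection of the exiting sequence's tail (all sufficiently small members of the p-end sequence contain $\tilde x$, after shrinking), and noting that every member of the sequence, being connected and containing $\tilde x$, determines the \emph{same} subgroup — namely $\{g\in\Gamma\mid g\tilde x \text{ is joined to }\tilde x \text{ by a path in that neighborhood}\}$ — which is independent of the member by compatibility. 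After that, the equality $\Gamma_{\tilde E}=\Gamma_{\tilde U}$ for every proper p-end neighborhood $\tilde U$ of $\tilde E$, and hence for every p-end neighborhood (which by definition contains a proper one and is $\Gamma_{\tilde E}$-invariant), follows formally.
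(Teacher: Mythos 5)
Your core argument is essentially the paper's: both reduce to a common smaller end neighborhood $U''\subset U\cap U'$, use the product structure $S_E\times I$ to see that the inclusion of the smaller neighborhood into the larger one is $\pi_1$-surjective (the paper pushes $\mathcal{G}$-paths in the sense of Bridson--Haefliger along the interval factor), and then identify the stabilizer $\Gamma_{\tilde U}$ of the component $\tilde U$ with the image of $\pi_1(U)\to\pi_1(\orb)$ by covering-space reasoning, so all the stabilizers coincide. Your mechanism --- surjectivity of $\pi_1(V)\to\pi_1(U)$ forces $p^{-1}(V)\cap\tilde U$ to be a single component, which $\Gamma_{\tilde U}$ must then preserve --- is the same one.

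Two small caveats. First, the remark in your last paragraph is false as stated: no basepoint $\tilde x$ lies in all sufficiently small members of a pseudo-end sequence, because the sequence is exiting (apply the definition to the compact set $K=\{p(\tilde x)\}$ to get $\tilde x\notin U_i$ for large $i$, so the tail has empty intersection). This does not damage the proof, since each comparison involves only finitely many neighborhoods and a basepoint in the smallest of them suffices, but the ``intersection of the tail'' device should be dropped. Second, the appeal to discreteness/properness and to Bowditch-style disjointness of translates is unnecessary here; the product structure alone does the work, exactly as in the paper.
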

\begin{proof}
Given $U$ and $U'$ that are end-neighborhoods for an end $E$, 
let $\tilde U$ and $\tilde U'$ be p-end neighborhoods for a p-end $\tilde E$
that are components of $p^{-1}(U)$ and $p^{-1}(U')$ respectively. 
Let $\tilde U''$ be the component of $p^{-1}(U'')$ that is a p-end neighborhood of $\tilde E$. 
Then $\Gamma_{\tilde U''}$ injects into $\Gamma_{\tilde U}$
since both are subgroups of $\Gamma$. 
Any $\mathcal{G}$-path in $U$ in the sense of Bridson-Haefliger \cite{BH} is homotopic 
to a $\mathcal{G}$-path in $U''$ by a translation in the $I$-factor. 
Thus, $\pi_{1}(U') \ra \pi_{1}(U)$ is surjective.  
Since $\tilde U$ is connected, any element $\gamma$ of 
$\Gamma_{\tilde U}$ is represented by a $\mathcal{G}$-path connecting 
$x_{0}$ to $\gamma(x_{0})$.  (See Example 3.7 in Chapter III.$\mathcal{G}$ of \cite{BH}.) 
Thus, $\Gamma_{\tilde U}$ 
is isomorphic to the image of $\pi_{1}(U) \ra \pi_{1}(\orb)$. 
Since $\Gamma_{\tilde U''}$ is surjective to the image of of $\pi_{1}(U'') \ra \pi_{1}(\orb)$, 
it follows that $\Gamma_{\tilde U''}$ is isomorphic to $\Gamma_{\tilde U}$. 
\end{proof}

Let $\orb$ be a strongly tame real projective orbifold.
We give each end of $\orb$ a marking. 
We fix a developing map $\dev:\torb \ra \rpn$ in this subsection.


A {\em ray} from a point $v$ of $\rpn$ is a segment with endpoint equal to $v$ 
oriented away from $v$. 

Let $E$ be an \hyperlink{term-Rend}{R-end} of $\orb$. 
\begin{itemize}
\item Let $\tilde E$ denote a p-R-end corresponding to $E$ and $U$ denote a p-R-end neighborhood of $\tilde E$
with a radial foliation $\mathcal{F}$ induced from the end marking on an end neighborhood of $E$.
\item Two radial leaves of equivalent radial foliations of proper p-end neighborhoods of $\tilde E$
are {\em equivalent} if they agree on a proper p-end neighborhood of $\tilde E$.
\item Let $x$ be the common end point of the images under the developing map of  leaves of $\mathcal{F}$. 
We call $x$ the {\hypertarget{term-pendvertex}{{\em p-end vertex}}} of $\torb$. 
$x$ will be denoted by $v_{\tilde E}$ if its associated p-end neighborhood corresponds to a p-end $\tilde E$. \index{end!p-end vertex} 
\item Let $\SI^{n-1}_{v_{\tilde E}}$ denote \hypertarget{detail-rays}{the space of equivalence classes of rays} 
from $v_{\tilde E}$ diffeomorphic to an $(n-1)$-sphere \index{$\SI^{n-1}_{v_{\tilde E}}$}
where $\pi_1({\tilde E})$ acts as a group of projective automorphisms.  
Here, $\pi_1({\tilde E})$ acts on $v_{\tilde E}$ and sends leaves to leaves in $U_1$. 
\item 
We denote by $R_{v_{\tilde E}}(\torb)= \tilde S_{\tilde E}$ as the following space
\[\big\{[l]\big|\, l \subset \torb, \dev(l) \hbox{ is a ray from } v_{\tilde E}\big\},\]
which is an $(n-1)$-dimensional open manifold. 
\item The map $\dev$ induces an immersion
\[ \tilde S_{\tilde E} \ra \SI^{n-1}_{v_{\tilde E}}.\]
Also, $\Gamma_{\tilde E}$ projectively acts on $\tilde S_{\tilde E}$ 
by $g([l]) = [g(l)]$ for each leaf $l$ and $g \in \Gamma_{\tilde E}$. 
\index{$R_{v_{\tilde E}}(\torb)$} \index{$\tilde S_{\tilde E}$}
\item Recall that $\tilde S_{\tilde E}/\Gamma_{\tilde E}$ is diffeomorphic to the end orbifold 
denoted by $S_E$.  Thus, $S_{E}$ has a convex real projective structure. 
(However, the projective structure and the differential topology on $S_{E}$ does depend on the end markings.)
\end{itemize}

Given a \hyperlink{term-Tend}{T-end} of $\orb$ and an end neighborhood $U$ of the product form $S_{E} \times [0, 1)$
with a compactification by a totally geodesic orbifold $S_{E}$, 
we take a component $U_1$ of $p^{-1}(U)$ and a convex domain $\tilde S_{\tilde E}$  
developing into a totally geodesic hypersurface $P$  under $\dev$. 
Here $\tilde E$ is the p-end corresponding to $E$ and $U_1$.
$\Gamma_{\tilde E}$ acts on $U_{1}$ and hence on $\tilde S_{\tilde E}$. 
Again ${\tilde S_{\tilde E}}/\Gamma_{\tilde E}$ is projectively diffeomorphic to the {\em end orbifold} to be denote by $S_E$ again. 
We call $\tilde S_{\tilde E}$ the {\em p-ideal boundary component} of $\torb$. \index{end!p-ideal boundary component} 
Generalizing further an open subset $U$ of $\torb$ containing a proper p-end-neighborhood of $\tilde E$, 
where $\pi_1(\tilde E)$ acts on, is said to be a {\em p-end neighborhood}. \index{end!p-end neighborhood}

\subsection{The admissible groups} \label{sub-gpadmissible}

If every subgroup of finite index of a group $\Gamma$ has a finite center, $\Gamma$ is said to be a {\em virtual center-free group}.
An {\em admissible group} $G$   acting on projective $\SI^{n-1}$ is a finite extension of the finite product  \index{admissible group}
$\bZ^{l-1} \times \Gamma_1 \times \cdots \times \Gamma_l$ 
for infinite hyperbolic or trivial groups $\Gamma_i$ 
with following properties:
\begin{itemize} 
\item $G$ acts on a properly convex domain of form $K_{1}\ast \cdots \ast K_{l}$ for a strictly convex domain $K_{j} \subset \SI^{n-1}$ for each $j$, $j = 1, \dots, l$,
in an $n_{j}$-dimensional subspace of $\SI^{n-1}$, $0 \leq n_{j} \leq n-1,$
\item $\Gamma_{j}$ is the restriction of $G$ to each $K_{j}$ 
and extended on $\SI^{n-1}$ to act trivially on $K_{m}$ for $m \ne j$
where $K_{j}^{o}/\Gamma_{j}$ is an orbifold of dimension $n_{j}$. 
\item $\bZ^{l-1}$ acts trivially on each $K_{j}$ and is a virtual center of $G$. 
\end{itemize}
This is strictly stronger than the conclusion of Proposition \ref{prop-Ben2} of Benoist and is needed for now.
Here, we conjecture that we do not need the stronger condition but the conclusion of Proposition \ref{prop-Ben2} 
is enough assumption for everything in this paper. 
(See also Example 5.5.3 of \cite{DW2} as pointed out by M. Kapovich. )

We have $l=1$ if and only if the end fundamental group is hyperbolic or is trivial. 
If our orbifold has a complete hyperbolic structure, then end fundamental groups are virtually free abelian. 
%


\subsection{The admissible ends} \label{sub-admissible} 

\begin{figure}[h]
\centerline{\includegraphics[height=7cm]{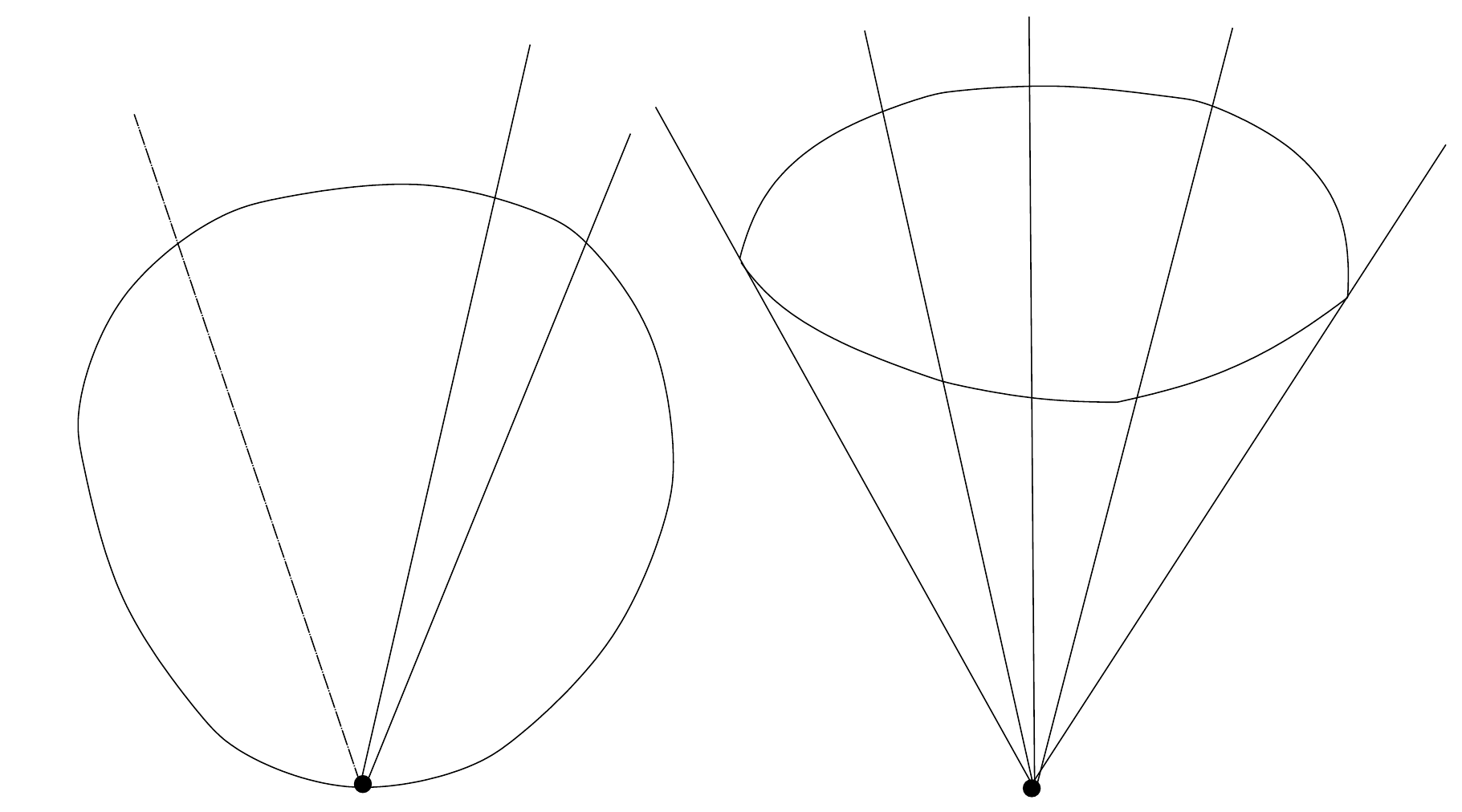}}
\caption{The universal covers of horospherical and lens shaped ends. The radial lines form cone-structures.}
\label{fig:lens}
\end{figure}

Let $\orb$ be a convex real projective orbifold with the universal cover 
$\torb$. 
\begin{itemize} 
\item A \hypertarget{term-cone}{{\em cone}} 
over a point $x$ and a set $A$ in an affine subspace of $\rpnn^n$ (resp. in $\SI^n$), $x \not\in \clo(A)$ is the set \index{cone} 
given by $x\ast A$ in $\rpnn^n$ (resp. in $\SI^n$).
\item Take a cone $C:= \{x\} \ast L $ over a \hyperlink{term-lens}{lens} $L$ 
and a point $x$, $x\not\in \clo(L)$, 
so that every maximal segment $l$ from $x$ in $C$ ends in one component $\partial_1 L$ of $\partial L$
and meets $\partial_{1} L$ and $\partial_{2} L$ exactly once. 
A \hypertarget{term-lenscone}{{\em lens-cone}} is $C - \{x\}$. 
\item Take a cone $C:= \{x\} \ast L $ over a \hyperlink{term-glens}{generalized lens} $L$
with the same properties as above where a nonsmooth component has to be in the boundary of the cone. 
A \hypertarget{term-glensc}{{\em generalized lens-cone}} is $C - \{x\}$. 
\item For two components $A_1$ and $A_2$ of $\partial L$ for $L$ as above in the lens-cone, 
$A_1$ is called a {\em top hypersurface} if it is in $\Bd (\{x\}\ast L)$ and $A_2$ is then called a {\em bottom hypersurface}.
\item A (generalized) \hypertarget{term-lensin}{{\em lens}} of 
a (generalized) lens-cone $C$ is the lens-shaped domain $A$ so that $C = \{ x \} \ast A -\{x\}$ for a point $x \not\in \clo(A)$ \index{lens} 
and with the properties in the first item. 
\item A \hypertarget{term-tgd}{{\em totally-geodesic subdomain}} is a convex domain in a hyperspace. 
\item A \hypertarget{term-cot}{{\em cone-over} a totally-geodesic open domain} $A$ is $\{x\} \ast A - \{x\}$ for
the cone $\{x\}\ast A$ over a point $x$ not in the hyperspace. \index{cone} 
\end{itemize} 
(See Figure \ref{fig:lens}.)
We will also call a real projective orbifold with boundary to be 
\begin{itemize}
\item a {\em lens-cone} or \index{lens-cone} 
\item a {\em lens}, provided it is compact, \index{lens} 
\end{itemize} if it is covered by such domains and is diffeomorphic to a closed $(n-1)$-orbifold times an interval.



We introduce some relevant adjectives: 
Let $S_E$ be an $(n-1)$-dimensional end orbifold corresponding to a p-end $\tilde E$, and
let $\mu$ be a holonomy homomorphism 
\[\pi_1(\tilde E) \ra \PGL(n+1, \bR) \hbox{ (resp. } \SL_{\pm}(n+1, \bR) \hbox{)}  \] 
restricted from that of $\orb$. 
\begin{itemize} 
\item Suppose that $\mu(\pi_1(\tilde E))$
acts on a (generalized) lens-shaped domain $K$ in $\rpnn^n$ (resp. in $\SI^n$) with boundary
a union of two open $(n-1)$-cells $A_1$ and $A_2$ and $\pi_1(\tilde E)$ acts properly on $A_1$ and $A_2$. 
Then $\mu$ is said to be a \hypertarget{term-lensrep}{({\em generalized}\,) {\em lens-shaped} representation} for $\tilde E$. 
\item $\mu$ is a \hypertarget{term-totgrep}{{\em totally-geodesic} representation} if 
$\mu(\pi_1(\tilde E))$ acts on a totally-geodesic subdomain. 
\item If $\mu(\pi_1(\tilde E))$ acts on a horoball $K$, then 
$\mu$ is said to be a  \hypertarget{term-hororep}{{\em horospherical} representation}. 
In this case, it follows $\Bd K - \partial K=\{v_{\tilde E} \}$ for the p-end vertex $v_{\tilde E}$ of $\tilde E$. 
\item If $\mu(\pi_1(\tilde E))$ acts on a strictly joined domain,
then $\mu$ is said to be a \hypertarget{term-strjrep}{{\em  strictly joined} representation}. 
\end{itemize} 



Let $C'$ be a generalized lens and $L:= \{v_{\tilde E}\} \ast C' - \{v_{\tilde E}\}$ be a \hyperlink{term-glensc}{generalized lens-cone} over $C'$.
A \hypertarget{term-cpenbd}{{\em concave p-end-neighborhood}} is an imbedded p-end neighborhood of form
 $L - C'$. 
 \index{end!p-end neighborhood!concave}  

\begin{definition}{(Admissible ends)} \label{defn-admissible} 
Let $\orb$ be a real projective orbifold with the universal cover $\torb$. 
Let $E$ be an R-end of $\orb$ and $\tilde E$ be the corresponding p-end with the p-end fundamental group 
$\pi_1(\tilde E)$. 
\begin{itemize} 
\item We say that the radial end $E$ of $\orb$ is of \hypertarget{term-lensend}{{\em lens-type}} if $\tilde E$ has a p-end neighborhood that 
is a lens-cone of form $L\ast \{v_{\tilde E}\} -  \{v_{\tilde E}\}$ and $\pi_{1}(\tilde E)$ acts on for its lens $L$. 
\item $E$ is of \hypertarget{term-genlensend}{{\em generalized lens-type}} if $\tilde E$ has a concave p-end neighborhood. 
Equivalently, a p-end neighborhood of $\tilde E$ is the interior of a generalized lens-cone of form $L\ast \{v_{\tilde E}\} -  \{v_{\tilde E}\}$ 
and $\pi_1(\tilde E)$ acts on the generalized lens $L$. 
\end{itemize} 

A p-R-end $\tilde E$ is \hypertarget{term-add}{{\em admissible}} if the p-end fundamental group acts on $\tilde S_{\tilde E}$
as an admissible group and 
if $\tilde E$ is a \hyperlink{term-horo}{horospherical} or \hyperlink{term-lensend}{lens-type p-R-end}. \index{end!admissible} \index{end!generalized admissible} 
\end{definition} \index{end!radial lens-type} \index{end!horospherical} \index{end!totally geodesic!lens} 


A T-end $E$ is of {\em lens-type} if $E$ satisfies the \hyperlink{term-lensT}{lens-condition} that the ideal boundary 
end orbifold $S_{E}$ has a lens-neighborhood $L$  in an ambient real projective orbifold containing $\orb$. 
For a component $C_1$ of $L - S_{E}$ inside $\orb$, 
$C_1 \cup S_{E}$ is said to be the \hypertarget{term-osed}{{\em one-sided end neighborhood}} 
of $S_{E}$. \index{one-sided end neighborhood} 
Given a p-end $\tilde E$, the orbifold 
$S_{E}$ is covered by a domain $\tilde S_{\tilde E}$ in the boundary of 
p-end neighborhood corresponding $\tilde E$ in a hyperspace.

A T-end is {\em admissible } if it is of lens-type and the p-end fundamental group for a p-end $\tilde E$ 
acts on the ideal boundary $\tilde S_{\tilde E}$ as an admissible group. 

A p-end is  \hypertarget{term-addg}{{\em admissible in a generalized sense}} if it is admissible or is 
a \hyperlink{term-genlensend}{generalized lens-type p-R-end}.

\begin{example}
A model of a lens-type R-end can be made by a positive diagonal group acting on 
the standard simplex $T$ in $\mathbbm{RP}^{3}$.
We take a vertex $v$ to be $[1,0,0,0]$ 
and we choose an abelian group $G$ of rank $3$ acting on $T$
and properly and freely on the interior $F$ of the side of $T$ opposite $v$.
We choose $G$ so that the eigenvalue at $v$ is not the largest or the smallest one for $g \in \Delta$
for the Zariski closure $\Delta$ of $G$. 
$(\{v\} \ast F -\{v\})/G$ is an end neighborhood of an ambient orbifold.  
The existence of lens follows by considering orbits of points under $\Delta$. 
(This follows by Theorem 5.1 of \cite{End1} since we can show that the uniform middle eigenvalue condition holds.
See also  Ballas \cite{Ballas}, \cite{Ballas2}, and Ballas-Danciger-Lee \cite{BDL} which include many graphics for ends.) 

Example \ref{exmp-endv} give these examples by Proposition  4.6 of \cite{End1} or more generally by Theorem \ref{thm:niceend}; 
that is,  we show that these have to be admissible lens-type R-ends or horospherical R-ends.
(Note also that these properties of the examples will hold during deformations as we will show later in Section \ref{sub-open}.) 
\end{example}



\section{The relative hyperbolicity of $\pi_{1}(\orb)$ and the strict convexity} \label{sec-relhyp}

\subsection{SPC-structures and its properties} \label{sub-spc}

\begin{definition} \label{defn:IE}
For a strongly tame orbifold $\mathcal{O}$, 
\begin{itemize}
\item[(IE)] $\orb$ or $\pi_1(\orb)$ satisfies the {\em infinite-index end fundamental group condition} (IE) \index{end!condition!IE}
if $[\pi_1(\mathcal O): \pi_1(E)] = \infty$ for the end fundamental group $\pi_1(E)$ of each end $E$. 
\item[(NA)] $\orb$ or $\pi_1(\orb)$ satisfies the {\em nonannular property} (NA) \index{end!condition!NA} 
if \[\pi_{1}(\tilde E_{1}) \cap \pi_{1}(\tilde E_{2}) \] is finite for two distinct p-ends $\tilde E_{1}, \tilde E_{2}$ of $\torb$, 
and a free abelian group of rank $2$ is conjugate to a subgroup of $\pi_{1}(E)$ for some end $E$. 
\end{itemize} 
\end{definition} 
(NA) implies that $\orb$ contains no essential torus and also that 
$\pi_1(E)$ contains every element $g \in \pi_1(\orb)$ normalizing $\langle h \rangle$ for 
an infinite order $h \in \pi_1(E)$ for an end fundamental group $\pi_1(E)$ of an end $E$. 
These conditions are satisfied by complete hyperbolic manifolds with cusps
and are group theoretical properties with respect to the end groups.

\begin{definition} 
An \hypertarget{term-spc}{{\em SPC-structure}} or {\em stable properly-convex real projective structure} on an $n$-orbifold 
is a real projective structure so that the orbifold is projectively diffeomorphic to a quotient orbifold of 
a properly convex domain in $\rpnn^n$ by a discrete group
of projective automorphisms that is \hyperlink{term-st}{stable} and  irreducible.
\end{definition}

\begin{definition}\label{defn:strict} 
Suppose that $\mathcal{O}$ has an SPC-structure. Let $\tilde U$ be 
the inverse image in $\tilde{\mathcal{O}}$ of the union $U$ of some choice of a collection of disjoint end neighborhoods of $\orb$
with compact $\clo(U)$. \index{SPC-structure}
If every straight arc and every non-$C^1$-point   in $\Bd \tilde{\mathcal{O}}$ 
are contained in the closure of a component of $\tilde U$, 
then $\mathcal{O}$ is said to be {\em strictly convex} with respect to the collection of the ends.  \index{convex!strictly}
And $\mathcal{O}$ is also said to have a \hypertarget{term-sspc}{{\em strict SPC-structure}} with respect to the collection of ends. \index{SPC-structure!strict}
\end{definition}
Notice that the definition depends on the choice of $U$. However, we will show that 
if each component $U$ is required to be of lens-type or \hyperlink{term-horo}{horospherical}, then
we show that the definition is independent of $U$ in \cite{convMa}. 

\begin{theorem}\label{thm-sSPC} 
Let $\orb$ be a noncompact strongly tame properly convex real projective manifold with 
\hyperlink{term-addg}{generalized admissible ends} 
and satisfies {\rm (IE)} and {\rm (NA)}.  
Then any finite-index subgroup of the holonomy group is strongly  irreducible and is not 
contained in a proper \hyperlink{term-pgroup}{parabolic subgroup} of $\PGL(n+1, \bR)$  {\rm (}resp.  $\SLpm${\rm ).}
\end{theorem}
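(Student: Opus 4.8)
The plan is to prove the statement by contradiction, combining the properly convex hypothesis with the structure of admissible ends and the relative hyperbolicity encoded in (IE) and (NA). Suppose some finite-index subgroup $\Gamma'$ of $\Gamma := h(\pi_1(\orb))$ either is reducible (fails to be strongly irreducible) or is contained in a proper parabolic subgroup of $\PGL(n+1,\bR)$. Passing to a further finite-index subgroup if necessary, we may assume $\Gamma'$ corresponds to a finite cover $\orb'$ of $\orb$, which is again noncompact, strongly tame, properly convex, with generalized admissible ends (finite covers respect end types), and still satisfies (IE) and (NA) since these are inherited by finite-index subgroups. So it suffices to derive a contradiction from the assumption that $\Gamma = \Gamma'$ itself preserves a proper projective subspace $W = \bP(V)$ with $0 < \dim V < n+1$, or lies in a proper parabolic subgroup.

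First I would treat the reducible case. If $\Gamma$ preserves a proper subspace $W$, then since $\orb = \Omega/\Gamma$ is properly convex, consider the intersection $W \cap \clo(\Omega)$ and the nature of $W$ relative to $\Omega$: either $W \cap \Omega = \emptyset$, in which case $W$ meets $\Bd\Omega$ in a $\Gamma$-invariant convex subset, or $W \cap \Omega \neq \emptyset$, in which case $W \cap \Omega$ is a lower-dimensional properly convex $\Gamma$-invariant domain and one gets a $\Gamma$-invariant "complementary" structure via a join or projection decomposition. In the latter situation, a Benoist-type join/splitting argument (Proposition~\ref{prop-projconv} together with the structure in Proposition~\ref{prop-Ben2}) forces $\Omega$ to be (a cone over) a strict join; this in turn forces $\pi_1(\orb)$ to contain a normal infinite abelian (indeed $\bZ$ or $\bZ^{l_0-1}$) subgroup or to split as a nontrivial virtual product, which one can contradict using the hypothesis that nilpotent normal subgroups are trivial in finite-index subgroups — wait, that hypothesis is in the Corollary, not here; instead I would use (NA) directly: a splitting of $\pi_1(\orb)$ as a product or the presence of a normal $\bZ$ centralizer conflicts with the nonannular condition and with (IE), because such a normal subgroup would have to interact with the end groups in a way (NA) forbids. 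When $W$ misses $\Omega$, the $\Gamma$-invariant convex subset $Z \subset \Bd\Omega$ must, by properness of the action and strong tameness, be "captured" by the ends — i.e., $Z$ has to lie in the closures of p-end neighborhoods — and the admissible/lens structure of the ends (Section~\ref{sub-admissible}) then pins down $\Gamma$-invariant structure that again violates (IE) (an end group would have finite index) or (NA).

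Next I would treat the parabolic case, which in many respects reduces to or refines the reducible one: a proper parabolic subgroup $P \subsetneq \PGL(n+1,\bR)$ stabilizes a flag, hence a proper subspace, so $\Gamma \subset P$ is in particular reducible, and one reruns the above; but additionally $\Gamma \subset P$ implies $\Gamma$ has no proximal elements acting "transversally enough," or more precisely its Zariski closure lies in $P$ and is therefore not reductive. I would invoke the duality machinery (Theorem~\ref{thm-Vinberg}, Proposition~\ref{prop-duality}) so that the dual orbifold $\orb^\ast = \Omega^\ast/\Gamma^\ast$ is also properly convex and strongly tame with admissible ends (ends dualize to ends of the appropriate type by Proposition~\ref{prop-example}), which lets me assume without loss of generality that the invariant subspace $W$ either meets $\Omega$ or meets $\Omega^\ast$, reducing to the first subcase of the reducible analysis. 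Then the Benoist splitting, combined with (IE) and (NA) ruling out the resulting normal abelian subgroups or product decompositions and the end-group index obstructions, closes the argument.

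The main obstacle I anticipate is the bookkeeping in the reducible case when the invariant subspace $W$ meets $\Bd\Omega$ but not $\Omega$: showing that the $\Gamma$-invariant convex set $Z = W \cap \clo(\Omega)$ is forced into the end neighborhoods requires the full end theory — one must argue that a $\Gamma$-invariant boundary convex set of positive dimension cannot "spiral through the interior structure" but must be subordinate to some p-end $\tilde E$, and then that $\pi_1(\tilde E)$ contains $Z$'s stabilizer with the wrong index, contradicting (IE); making this precise needs the relative hyperbolicity dictionary of Section~\ref{sec-relhyp} (the identification of the ends with the peripheral structure in the sense of Bowditch) and careful use of (NA) to handle the borderline case where $Z$ is stabilized by a rank-$2$ abelian subgroup. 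I would expect this step to consume the bulk of the proof; the parabolic case and the $W \cap \Omega \neq \emptyset$ case should then follow by comparatively standard join-decomposition and duality arguments.
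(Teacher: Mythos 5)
The paper does not actually prove Theorem \ref{thm-sSPC} here; it defers entirely to Section 4 of \cite{convMa}, so there is no in-text proof to compare against. Judged on its own terms, your outline has the right overall shape (contradiction via an invariant subspace $W$, a case split on whether $W$ meets $\Omega$, duality to handle the parabolic case and the empty-intersection case, and an appeal to the end structure plus (IE)/(NA)), but two of its load-bearing steps do not work as written. First, in the case $W \cap \Omega \neq \emptyset$ you invoke Proposition \ref{prop-Ben2} to force a strict-join decomposition of $\Omega$; that proposition is about \emph{cocompact} actions on properly convex domains, and $\Gamma$ does not act cocompactly on $\Omega$ here, so no join decomposition follows. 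A properly convex domain can carry a non-cocompact properly discontinuous action preserving a hyperplane section without being a join, so the contradiction you want to extract from (NA) at that point has nothing to bite on. Your own parenthetical (``wait, that hypothesis is in the Corollary, not here'') flags the second symptom of the same problem: the mechanism by which (IE) and (NA) alone --- without the nilpotent-normal-subgroup hypothesis of Corollary \ref{cor-closed1} --- eliminate the reducible case is exactly the content of the theorem, and in your write-up it is asserted rather than argued.

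Second, the step you yourself identify as consuming ``the bulk of the proof'' --- forcing a $\Gamma$-invariant convex set $Z \subset \Bd\Omega$ into the closures of p-end neighborhoods --- is proposed to be carried out with ``the relative hyperbolicity dictionary of Section \ref{sec-relhyp}.'' That is circular: Theorem \ref{thm-relhyp} assumes a \emph{strict SPC} structure, and the SPC condition already requires the holonomy to be stable and irreducible, which is precisely what Theorem \ref{thm-sSPC} is supposed to establish. The argument has to run on the raw end classification (horospherical or (generalized) lens-type p-ends, their fixed points and invariant hyperplanes, Section \ref{sub-admissible}) together with (IE) and (NA), before any of the Section \ref{sec-relhyp} machinery becomes available. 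Until the $W\cap\Omega\neq\emptyset$ case is handled by an argument valid for non-cocompact actions and the boundary case is handled without appealing to relative hyperbolicity, the proof is not complete.
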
 

For proof, see Section 4 of \cite{convMa}.

\subsection{Bowditch's method}\label{sub-Bowditch} 

There are results proved by Cooper, Long, and Tillman \cite{CLT3} and Crampon and Marquis \cite{CM} 
similar to below. However, the ends have to be horospherical in their work. 
We will use Bowditch's result \cite{Bowditch} to show 

\begin{theorem}\label{thm-relhyp}
Let $\mathcal{O}$ be a noncompact strongly tame 
\hyperlink{term-sspc}{strict SPC-orbifold} with \hyperlink{term-addg}{generalized admissible ends} 
$E_1, \dots, E_k$ and satisfies {\rm (IE)} and {\rm (NA)}. 
Assume $\partial \orb =\emp$. 
Let $\tilde U_i$ be the inverse image $U_i$ in $\torb$ for a mutually disjoint collection of
 neighborhoods $U_i$ of the ends $E_i$ for $i=1, \dots, k$. 
Then 
\begin{itemize} 
\item $\pi_1(\mathcal{O})$ is relatively hyperbolic with respect to 
the end fundamental groups \[\pi_1(E_1), \ldots , \pi_1(E_k).\] 
Hence $\orb$ is relatively hyperbolic with 
respect to $U_1 \cup \cdots \cup  U_k$. 
\item If $\pi_1(E_{l+1}), \ldots , \pi_1(E_k)$ are hyperbolic for some $1 \leq  l \leq k$
{\rm (}possibly some of the hyperbolic ones{\rm )},
then $\pi_1(\mathcal{O})$ is relatively hyperbolic with respect to the end fundamental group 
$\pi_1(E_1), \dots, \pi_1(E_{l})$. 
\end{itemize}
\end{theorem}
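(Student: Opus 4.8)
The plan is to apply Bowditch's characterization of relative hyperbolicity (in the form of his result on actions on $\delta$-hyperbolic spaces with a geometrically finite-type flavor, \cite{Bowditch}) to the action of $\pi_1(\orb)$ on the universal cover $\torb$ equipped with the Hilbert metric $d_{\torb}$, after excising horoball-like and lens-cone-like regions. The Hilbert metric is a genuine metric on $\torb^o = \Omega^o$ by Section \ref{subsec-metrics}, and it is Gromov-hyperbolic away from the flats that can only occur near the ends, which is precisely where strict convexity with respect to the ends (Definition \ref{defn:strict}) is used: since $\orb$ is a strict SPC-orbifold, every straight segment and every non-$C^1$ point of $\Bd\torb$ lies in the closure of a component of $\tilde U = p_{\torb}^{-1}(U_1 \cup \cdots \cup U_k)$. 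First I would set up the coarse geometry: truncate $\torb$ by removing the open sets $\tilde U_i$ (so that each $\tilde U_i$ is $\pi_1(\tilde E_i)$-invariant and the collection is exiting and disjoint), and check that the resulting space $Y := \torb \setminus \bigcup g(\tilde U_i)$, with the induced path metric, carries a proper cocompact action of $\pi_1(\orb)$; cocompactness follows from strong tameness, and the $\pi_1(E_i)$ are exactly the stabilizers of the removed ``horoball'' pieces.

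The heart of the argument is to verify Bowditch's hypotheses: (1) $\pi_1(\orb)$ acts properly discontinuously and cocompactly on a path-metric space which is hyperbolic in the large, or alternatively that the truncated space $Y$ is itself (Gromov-)hyperbolic and that the action is geometrically finite with peripheral structure $\{\pi_1(E_1),\dots,\pi_1(E_k)\}$; and (2) that each peripheral subgroup $\pi_1(E_i)$ stabilizes a ``horoball'' and the collection of horoballs is equivariantly separated. For (1), I would argue that $d_{\torb}$ restricted to the complement of the end neighborhoods is quasi-isometric to a hyperbolic space: away from $\tilde U$ the boundary $\Bd\torb$ is ``uniformly strictly convex and $C^1$'' in the coarse sense (any segment or non-smooth point forces you into an end region), and by a Benoist/Cooper–Long–Tillman–style argument (cf. \cite{CLT3}, \cite{CM}, and Benoist's work on divisible strictly convex sets) this gives the $\delta$-thin-triangle condition on the truncated space, or equivalently realizes $\pi_1(\orb)$ as acting on $\torb$ with a ``relatively hyperbolic'' boundary $\Bd\torb$ whose parabolic points are exactly the p-end vertices and p-ideal-boundary components. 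Conditions (IE) and (NA) enter to guarantee the peripheral structure is malnormal-ish: (IE) gives $[\pi_1(\orb):\pi_1(E_i)]=\infty$ so the ends are genuinely ``at infinity'', and (NA) forces $\pi_1(\tilde E_1)\cap\pi_1(\tilde E_2)$ finite and makes each $\pi_1(E_i)$ self-normalizing among the relevant abelian subgroups, which is exactly the almost-malnormality needed for a valid peripheral structure.

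For the second bullet, once $\pi_1(\orb)$ is relatively hyperbolic relative to $\{\pi_1(E_1),\dots,\pi_1(E_k)\}$ and some of these, say $\pi_1(E_{l+1}),\dots,\pi_1(E_k)$, are themselves word-hyperbolic, I would invoke the standard fact (Osin, Drutu–Sapir, Dahmani) that relative hyperbolicity is preserved under dropping hyperbolic peripheral subgroups from the peripheral structure: if $G$ is hyperbolic relative to $\{H_1,\dots,H_k\}$ and $H_{l+1},\dots,H_k$ are hyperbolic, then $G$ is hyperbolic relative to $\{H_1,\dots,H_l\}$. I would cite this directly rather than reprove it. Concretely, one either quotes the peeling-off lemma or notes that coning off only the $\tilde U_i$ for $i\le l$ still yields a hyperbolic space because the finitely many extra coset-neighborhoods being coned were already quasiconvex-hyperbolic, so no new thin-triangle failures are introduced.

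The main obstacle I expect is step (1): establishing that the Hilbert geometry of $\torb$ truncated along the generalized admissible end neighborhoods is genuinely Gromov-hyperbolic (equivalently that $(\orb, U_1\cup\cdots\cup U_k)$ satisfies Bowditch's ``fine hyperbolic'' or geometrically finite criterion). The subtlety is that admissible ends are allowed to have higher-rank abelian center ($\bZ^{l-1}$) and lens-cone rather than horospherical geometry, so the removed regions are not literal horoballs and the flats they carry must be shown to be \emph{exactly} accounted for by the peripheral subgroups — nothing flat or non-strictly-convex may survive in the truncated space. This is where Definition \ref{defn:strict} and Theorem \ref{thm-sSPC} (strong irreducibility, no parabolic subgroup) do the crucial work: strong irreducibility rules out global splittings that would create stray flats, and strict convexity with respect to the ends confines all non-hyperbolic behavior to the $\tilde U_i$. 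I would handle this by a compactness/limit argument on $\Bd\torb$ combined with the geometric convergence estimate \eqref{eqn:HiHa} for Hilbert metrics under geometric convergence of the domains, following the template of Crampon–Marquis \cite{CM} but adapting it to allow the lens-type (non-horospherical) ends permitted here.
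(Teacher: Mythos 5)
Your route and the paper's are genuinely different, and yours has a gap at its central step. The paper never tries to produce a Gromov-hyperbolic metric space: it collapses, for each component $C$ of $p^{-1}(U)$, the set $\clo(C)\cap\Bd\torb$ to a single point, obtains a compactum $X$, and then verifies Bowditch's \emph{dynamical} axioms directly --- proper discontinuity of the action on distinct triples of $X$, with every point of $X$ either a conical limit point or a bounded parabolic point stabilized by some $\pi_1(E_i)$. The survey's pointed remark that the Cooper--Long--Tillmann and Crampon--Marquis results ``require the ends to be horospherical'' is precisely a warning that the metric strategy you propose is confined to that case; the boundary-compactum strategy is how the paper escapes it.

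The gap is this: when some end fundamental group contains $\bZ^{2}$ --- the typical situation here, since generalized admissible ends have peripheral groups that are finite extensions of $\bZ^{l-1}\times\Gamma_1\times\cdots\times\Gamma_l$ and the motivating examples are cusped hyperbolic $3$-orbifolds --- neither $(\torb,d_{\torb})$ nor your truncated space $Y$ is Gromov hyperbolic. The lens-cones contain properly embedded Hilbert flats (simplices coming from the join structure) on which the abelian and product factors act cocompactly, and after excising the $\tilde U_i$ the peripheral groups still act cocompactly and undistortedly on the frontiers $\partial\tilde U_i\subset Y$; by Milnor--\v{S}varc, $Y$ is quasi-isometric to $\pi_1(\orb)$ and therefore contains quasi-flats, exactly as the complement of the horoballs in $\bH^{3}$ for a cusped manifold fails to be hyperbolic. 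So your fallback claim that ``$Y$ is itself Gromov-hyperbolic'' is false, and your resolution of the ``main obstacle'' --- that strict convexity confines all flats to the $\tilde U_i$ so that ``nothing flat survives in the truncated space'' --- does not hold: truncation removes the interiors of the end neighborhoods but not the flat directions their stabilizers impose on $Y$. To repair the argument geometrically you would need the coned-off graph together with Bowditch's fineness condition, or the Groves--Manning cusped space, neither of which you construct; otherwise you must pass to the convergence-group formulation on a collapsed boundary compactum, which is what the paper does. Your treatment of the second bullet (dropping word-hyperbolic peripherals via Osin/Drutu--Sapir) is correct and standard.
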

For definitions and results on relative hyperbolicity of metric spaces, see Bowditch \cite{Bowditch} or Farb \cite{Farb}.

The idea for proof is as follows: 
Let $U$ be a union of end neighborhoods of $\orb$ diffeomorphic to 
an orbifold times an interval. $\orb - U$ is a compact orbifold with boundary.  
We contract $\clo(C) \cap \Bd \torb$ for each component $C$ of $p^{-1}(U)$ to 
a singleton to obtain a quotient space $X$. 
Then $X$ is homeomorphic to a compact metric space, i.e., a compactum.
We demonstrate that the axioms of Bowditch are satisfied 
by analyzing the triples of points in $X$ in \cite{convMa}. 

\subsection{Converse}

The converse to Theorem \ref{thm-relhyp} is as follows:

\begin{theorem} \label{thm-converse}
Let $\mathcal{O}$ be a noncompact strongly tame properly convex real projective orbifold with \hyperlink{term-addg}{generalized admissible ends} 
and satisfies {\rm (IE)} and {\rm (NA)}.  Assume $\partial \orb =\emp$. 
Suppose that $\pi_1(\mathcal{O})$ is a relatively hyperbolic group with respect to 
the admissible end groups $\pi_1(E_1), ..., \pi_1(E_k)$. 
Then $\mathcal{O}$ is \hyperlink{term-sspc}{strictly SPC} with respect to the admissible ends $E_1, \dots, E_k$. 
\end{theorem}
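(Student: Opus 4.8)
The strategy is to run the converse direction of Bowditch's characterization of relative hyperbolicity, transferring the group-theoretic hypothesis into a convexity statement about $\Bd \torb$. First I would fix a properly convex domain $\torb \subset \rpnn^n$ with $\orb = \torb/\Gamma$, $\Gamma = \pi_1(\orb)$, and choose disjoint admissible end neighborhoods $U_1, \dots, U_k$ with compact $\clo(U_i)$; lifting, we get the $\Gamma$-invariant family of p-end neighborhoods and the corresponding limit sets $\Lambda_{\tilde E} := \clo(\tilde U_{\tilde E}) \cap \Bd \torb$ in $\Bd \torb$ for each p-end $\tilde E$. Because the ends are admissible in the generalized sense, each $\Lambda_{\tilde E}$ is either a single point (horospherical case) or the closure of the appropriate lens/join boundary piece, and by the end theory of Section \ref{sec-endt} (e.g. Proposition \ref{prop-Ben2} and the admissibility hypothesis) $\Lambda_{\tilde E}$ contains all the straight segments and non-$C^1$ points it must. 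The target is: every straight arc in $\Bd \torb$ and every non-$C^1$ point of $\Bd \torb$ lies in some $\Lambda_{\tilde E}$. Once that is shown, Definition \ref{defn:strict} gives the strict SPC-structure.

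Next I would use the Hilbert metric $d_{\torb}$ on $\torb^o$, which is Gromov-hyperbolic relative to the horoball-like regions bounded by the $\tilde U_{\tilde E}$: the hypothesis that $\Gamma$ is relatively hyperbolic with respect to $\pi_1(E_1), \dots, \pi_1(E_k)$, combined with the fact (Theorem \ref{thm-sSPC}) that $\Gamma$ is strongly irreducible and not virtually parabolic — hence $\orb$ is genuinely properly convex and $d_{\torb}$ is a genuine metric — lets me identify the Bowditch boundary $\partial(\Gamma, \{\pi_1(E_i)\})$ with a quotient of $\Bd \torb$ in which each $\Lambda_{\tilde E}$ is collapsed to a parabolic point, mirroring the construction of the compactum $X$ used in the proof of Theorem \ref{thm-relhyp}. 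The key analytic input is a dichotomy for the geometry of $\Bd \torb$ away from the cusp regions: I would show that if $\Bd \torb$ contained a straight segment $s$ (or a non-$C^1$ point $p$) not contained in any $\Lambda_{\tilde E}$, then one can produce a quasi-geodesic bi-infinite ray in $(\torb^o, d_{\torb})$ that fellow-travels $s$ (or ``turns a corner'' at $p$) and stays a bounded distance from the thick part $\orb - U$; such a configuration forces a $\bZ^2$ or an incoherence in the relatively hyperbolic structure — more precisely it produces two distinct points of the Bowditch boundary that cannot be separated, or a non-conical non-bounded-parabolic limit point, contradicting the defining dynamics of a relatively hyperbolic action. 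This is essentially the Benoist-type rigidity "no segment in the boundary of a divisible/relatively-divisible convex set off the cusps" argument, adapted from the closed case (where strict convexity $\iff$ Gromov-hyperbolicity) to the relative setting; Cooper–Long–Tillmann \cite{CLT3} and Crampon–Marquis \cite{CM} do the horospherical version, and the point here is to push it through with the admissible (lens-type, possibly non-parabolic) ends allowed by the end theory.

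Concretely the steps in order: (1) set up $\torb$, $\Gamma$, the end neighborhoods and their limit sets, and reduce via Definition \ref{defn:strict} to the segment/non-$C^1$-point statement; (2) record that (IE), (NA), admissibility and Theorem \ref{thm-sSPC} make $\torb$ properly convex with $\Gamma$ strongly irreducible, so the Hilbert metric is a bona fide $\Gamma$-cocompact-on-the-thick-part metric space; (3) build the collapsed compactum $X$ as in Theorem \ref{thm-relhyp} and use the hypothesis to identify $X$ with the Bowditch boundary, so that the $\Lambda_{\tilde E}$ are exactly the bounded-parabolic points and everything else is a conical limit point; (4) prove the dichotomy: a conical limit point of $\Bd \torb$ is automatically a $C^1$-extreme point (no segment through it, boundary $C^1$ there), by a convergence-group/contraction argument on geodesics exiting through that point, so any segment or non-$C^1$ point must sit inside some $\Lambda_{\tilde E}$; (5) conclude strict SPC-ness with respect to $E_1, \dots, E_k$. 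The main obstacle is step (4) in the presence of non-horospherical admissible ends: when an end group is virtually $\bZ^{l-1}\times(\text{hyperbolic})^l$ acting on a join, the limit set $\Lambda_{\tilde E}$ already contains segments (the join structure), so one must carefully show the segments and corners of $\Bd\torb$ that are \emph{forced} by the lens/join geometry are all captured inside the $\Lambda_{\tilde E}$, and that no ``extra'' flat escapes into the thick part — this is where the admissibility hypothesis (the strictly-convex factors $K_j$, the virtual-center $\bZ^{l-1}$) and the precise matching between the Bowditch peripheral structure and the end structure must be used, and it is the technically heaviest part of the argument, carried out in detail in \cite{convMa}.
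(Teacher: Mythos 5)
Your strategy is genuinely different from the paper's, and as written it contains a gap that I do not think you can close without importing a large amount of additional machinery. The paper does not go through the Bowditch boundary at all in this direction. Its argument is: assume some segment of $\Bd \torb$ is not contained in $\clo(\tilde U)\cap \Bd\torb$ for any p-end neighborhood $\tilde U$; from a sequence of points converging to an interior point of that segment, produce a triangle $T$ with $\partial T \subset \Bd\torb$ and with $\partial T$ not in the closure of any p-end neighborhood; then pass to an asymptotic cone $\lim^{\omega}(\torb, \tfrac{1}{l_i} d_{\torb})_e$. By Drutu--Sapir, relative hyperbolicity of $\pi_1(\orb)$ makes this cone tree-graded with pieces equal to $\omega$-limits of sequences of components of the lifted end neighborhoods (Proposition 7.26 of \cite{DuSa}); the rescaled triangles $T^o_i$ converge to a triangle $T_\infty$ carrying de la Harpe's hex metric, i.e.\ a flat, which cannot be split among pieces and hence must lie in a single piece --- contradicting the construction. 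The whole argument lives in the interior geometry of $(\torb, d_{\torb})$ and never needs to match $\Bd\torb$ with any abstract boundary.

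The gap in your plan is step (3). You propose to ``identify $X$ with the Bowditch boundary, so that the $\Lambda_{\tilde E}$ are exactly the bounded-parabolic points and everything else is a conical limit point.'' Bowditch's (or Yaman's) uniqueness theorem identifies a compactum with $\partial(\Gamma, \{\pi_1(E_i)\})$ only \emph{after} one has verified that the $\Gamma$-action on that compactum is a geometrically finite convergence action --- in particular that every point of $X$ outside the images of the $\Lambda_{\tilde E}$ is conical. That verification is essentially the statement you are trying to prove: in the forward direction (Theorem \ref{thm-relhyp}) the paper checks Bowditch's axioms for $X$ using strict convexity as an input. Abstract relative hyperbolicity of $\pi_1(\orb)$ hands you the Bowditch boundary as an abstract object but gives no equivariant map between it and $\Bd\torb$ (or $X$); constructing such a map, or proving directly that every non-peripheral point of $\Bd\torb$ is a conical limit point in the Hilbert-geometric sense, is precisely where a stray segment or corner of $\Bd\torb$ away from the ends would hide, so the argument is circular at this point. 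Your step (4) (conical $\Rightarrow$ $C^1$ extreme) is a genuine and known lemma, but without step (3) it cannot be applied to an arbitrary segment or non-$C^1$ point. If you want to pursue this dynamical route you would need Crampon--Marquis/Islam--Zimmer-type machinery to first establish geometric finiteness of the boundary action from the group-theoretic hypothesis; the paper's asymptotic-cone argument sidesteps that issue entirely.
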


Let $U$ be as in the above section, and let $\tilde U = p^{-1}(U) \subset \torb$. 
To give some idea of the proof, we take any segment in $\Bd \torb$
not contained in any component of $\clo(\tilde U)\cap \Bd \torb$.
Then we find a triangle $T$ with $\partial T \subset \Bd \torb$ using a sequence of points
converging to an interior point of the segment.
Also, we construct so that $\partial T$ is not in the closure of any p-end neighborhood.

Let us recall standard definitions in Section 3.1 of Drutu-Sapir \cite{DuSa}. 
An {\em ultrafilter $\omega$} is a finite additive measure on $P(\bN)$ of $\bN$ so that
each subset has either measure $0$ or $1$ and all finite sets have measure $0$. 
If a property $P(n)$ holds for all $n$ from a set with measure $1$, we say that $P(n)$ holds {\em $\omega$-almost surely}. \index{ultrafilter}

Let $(X, d_X)$ be a metric space.
Let $\omega$ be a nonprincipal ultrafilter over the set $\bN$ of natural numbers. 
For a sequence $(x_i)_{i \in \bN}$ of points of $X$, its {\em $\omega$-limit} is $x \in X$ if for every neighborhood $U$ of $x$
the property that $x_i \in U$ holds $\omega$-almost surely. 

An {\em ultraproduct} $\prod X_n/\omega$ of a sequence of sets $(X_n)_{n \in \bN}$ is the set of 
the equivalence classes of sequences $(x_n)$ where $(x_n) \sim (y_n)$ if $x_n  = y_n$ holds for $\omega$-almost surely. 

Given a sequence of metric spaces $(X_n, d_n)$, consider the ultraproduct $\prod X_n$ and an observation point $e=(e_n)$. 
Let $D(x, y) = \lim_{\omega} d_n(x_n, y_n)$. Let $\prod_e X_n/\omega$ denote the set of equivalence classes of sequences of bounded distances from $e$.
The {\em $\omega$-limit $\lim^{\omega} (X_n)_e$} is the metric space obtained from $\prod_e X_n/\omega$ by identifying  \index{asymptotic cone} 
all pair of points $x, y$ with $D(x, y) =0$. 

Given an ultrafilter $\omega$ over the set $\bN$ of natural numbers, an observation point $e=(e_i)^\omega$, and 
sequence of numbers $\delta= (\delta_i)_{i\in \bN}$ satisfying $\lim_\omega \delta_i = \infty$, the $\omega$-limit 
$\lim^\omega (X, d_X/\delta_i)_e$ is called the {\em asymptotic cone} of $X$. (See \cite{Gr1}, \cite{Gr2} and Definitions 
3.3 to 3.8 in \cite{DuSa}.) We denote it by $Con^\omega(X, e, \delta)$. 

For a sequence $(A_n)$ of subsets $A_n$ of $X$, we denote by $\lim^\omega(A_n)$ the subset of $Con^\omega(X, e, \delta)$ 
that consists of all elements $(x_n)$ where $x_n \in A_n$ $\omega$-almost surely. 
The asymptotic cone is always complete and $\lim^\omega(A_n)$ is closed.


Next, we choose a nonprincipal ultrafilter $\omega$ and a sequence $l_{i} \ra \infty$.
We use the $\omega$-limit $\torb_{\infty}$ of $\frac{1}{l_{i}} d_{\torb}$ on $\torb$
with a constant base point $e_{i} = e \in \torb$. 
This turns out to be a tree-graded space in the sense of Drutu and Sapir \cite{DuSa}. 
Let $T_{i}^{o}$ be $T^{o}$ with the metric  $\frac{1}{l_{i}} d_{\torb}|T^{o}$, 
which is a hex metric of de la Harpe \cite{Harpe}. 

The inverse image in $U$ of $\torb$ of the union of disjoint end neighborhoods of $\orb$. 
A piece in the limit tree-graded space is a limit of a sequence of components of $U$
by Proposition 7.26 of \cite{DuSa}. 
The sequence $\{T^{o}_{i} \subset \torb\}$  converges to a triangle $T_{\infty}$
with the hex metric and we show that $T_{\infty}$ is not contained in a piece by a geometric argument. 
However, a triangle with a hex metric cannot be divided into more than one piece.  


\subsection{Strict SPC-structures deform to strict SPC-structures.}

By above Theorems \ref{thm-relhyp} and \ref{thm-converse},  the property of strictness of the SPC-structures is topological. 
Hence, the strictness is a stable property among the set of the SPC-structures. 

\begin{theorem}\label{thm-relhyp1}
Let ${\mathcal{O}}$ denote a noncompact strongly tame strict \hyperlink{term-spc}{SPC-orbifold} with admissible ends 
and satisfies {\rm (IE)} and {\rm (NA)}.  Assume $\partial \orb =\emp$. 
Let \[E_1, \dots, E_n, E_{n+1}, \dots, E_k\] be the ends of $\orb$
where $E_{n+1}, \dots, E_k$ are some or all of the hyperbolic ends. 
\begin{itemize}
\item Given a deformation through SPC-structures with \hyperlink{term-addg}{generalized admissible ends} of a strict SPC-orbifold with respect to admissible ends 
$E_1, \dots, E_k$ to an SPC-structure with \hyperlink{term-addg}{generalized admissible end}, the SPC-structures remain strictly SPC with respect to $E_1, \dots, E_k$. 
\item Given a deformation through SPC-structures with \hyperlink{term-addg}{generalized admissible ends} of a strict SPC-orbifold with respect to $E_1, \dots, E_n$ 
to an SPC-structure with \hyperlink{term-addg}{generalized admissible end},  the SPC-structures remain strictly SPC 
with respect to admissible ends $E_1, \dots, E_n$. 
\end{itemize}
\end{theorem}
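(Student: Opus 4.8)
The plan is to combine the two characterization theorems, Theorem \ref{thm-relhyp} and Theorem \ref{thm-converse}, with the openness result Theorem \ref{thm-A} (or rather the closed/open dichotomy of the deformation space coming from the later sections). The key observation, as stated in the subsection heading, is that strictness of an SPC-structure is detected purely by the topology of $\pi_1(\orb)$ relative to the end groups: by Theorem \ref{thm-relhyp}, if $\orb$ is strict SPC with respect to admissible ends $E_1,\dots,E_k$ then $\pi_1(\orb)$ is relatively hyperbolic with respect to $\pi_1(E_1),\dots,\pi_1(E_k)$; and by Theorem \ref{thm-converse}, conversely, if $\pi_1(\orb)$ is relatively hyperbolic with respect to the admissible end groups then $\orb$ is strictly SPC with respect to those ends. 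Since a deformation through SPC-structures with generalized admissible ends does not change the underlying orbifold $\orb$ (only the projective structure), it does not change $\pi_1(\orb)$, nor the end subgroups $\pi_1(E_i)$ up to the identification coming from the isotopy; in particular the relative hyperbolicity of $\pi_1(\orb)$ with respect to $\pi_1(E_1),\dots,\pi_1(E_k)$ is a fixed group-theoretic fact, independent of which point of the deformation space we sit at.

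First I would fix the deformation: let $\mu_t$, $t\in[0,1]$, be a path of SPC-structures with generalized admissible ends, with $\mu_0$ strictly SPC with respect to the admissible ends $E_1,\dots,E_k$. I would apply Theorem \ref{thm-relhyp} to $\mu_0$ to conclude that $\pi_1(\orb)$ is relatively hyperbolic with respect to $\pi_1(E_1),\dots,\pi_1(E_k)$. This conclusion is a statement about the abstract pair $(\pi_1(\orb);\pi_1(E_1),\dots,\pi_1(E_k))$ and so holds for every $t$. Next I would apply Theorem \ref{thm-converse} at each $\mu_t$: we need to check its hypotheses, namely that $\mu_t$ is a (not necessarily strict) properly convex SPC-structure with generalized admissible ends satisfying (IE) and (NA), that $\partial\orb=\emptyset$, and that the end groups are admissible. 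The conditions (IE), (NA), and $\partial\orb=\emptyset$ are hypotheses of the present theorem and are topological, hence preserved; so the only real content is that the ends $E_1,\dots,E_k$ remain \emph{admissible} (not merely admissible in the generalized sense) along the deformation. Granting this, Theorem \ref{thm-converse} applied at $\mu_t$ yields that $\mu_t$ is strictly SPC with respect to $E_1,\dots,E_k$, which is exactly the first bullet. For the second bullet, one applies the same argument but using the second part of Theorem \ref{thm-relhyp}: if the ends $E_{n+1},\dots,E_k$ have hyperbolic fundamental groups, relative hyperbolicity with respect to all the $\pi_1(E_i)$ upgrades to relative hyperbolicity with respect to only $\pi_1(E_1),\dots,\pi_1(E_n)$, and then Theorem \ref{thm-converse} with the peripheral system $\pi_1(E_1),\dots,\pi_1(E_n)$ gives strictness with respect to $E_1,\dots,E_n$.

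The main obstacle I expect is precisely the passage from ``generalized admissible'' to ``admissible'' along the deformation — i.e.\ showing that if $\mu_0$ has a \emph{lens-type} (or horospherical) radial end and we deform through structures whose ends are only known to be admissible in the generalized sense, the end of $\mu_t$ does not degenerate to a genuinely generalized-lens-type end that fails the lens condition. Here I would invoke the stability/openness machinery: Theorem \ref{thm-sSPC} guarantees the holonomy stays strongly irreducible and stable, and (as flagged in the example after Definition \ref{defn-admissible} and in Section \ref{sub-open}) the lens-type property of an end is an open condition on holonomy under a uniform-middle-eigenvalue hypothesis, so it persists. Combined with the relative hyperbolicity already established — which, via Theorem \ref{thm-converse}, forces strict convexity and hence rules out the ``join'' behavior characteristic of non-strict ends — the set of $t$ for which $E_i$ is honestly admissible is both open and closed in $[0,1]$, hence all of $[0,1]$. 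Once this is in hand the rest is a formal application of the two characterization theorems as above.
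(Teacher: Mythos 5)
Your proposal is correct and takes essentially the same route as the paper, whose entire argument is that Theorems \ref{thm-relhyp} and \ref{thm-converse} together characterize strictness by the relative hyperbolicity of $\pi_1(\orb)$ with respect to the end fundamental groups, a purely group-theoretic property that cannot change along a deformation. The worry in your last paragraph is not needed for the formal application: Theorem \ref{thm-converse} only requires \emph{generalized} admissible ends as a hypothesis, so it applies at every point of the deformation as stated.
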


\section{The openness and closedness in character varieties} \label{sec-clopen} 

We will now begin to discuss the main aim of this papers. This is to identify the deformation spaces of
convex real projective structures on a strongly tame orbifold $\orb$ with end conditions with parts of character varieties
of $\pi_{1}(\orb)$ with corresponding conditions on holonomy groups of ends.   
We mention that the uniqueness condition below simplifies the theory greatly. Otherwise, we need to 
use the sections picking the vertices and the totally geodesic planes fixed by the holonomy group of 
each end.

\subsection{ The semi-algebraic properties of $\rep^s(\pi_1(\mathcal{O}), \PGL(n+1, \bR))$ and related spaces }\label{sub-semialg}

We will now recall Section \ref{subsub-char} and make it more precise. 

A \hypertarget{term-pgroup}{{\em parabolic subalgebra}} $\mathfrak{p}$ is an algebra in a semisimple Lie algebra $\mathfrak{g}$
whose complexification contains a maximal solvable subalgebra of $\mathfrak{g}$  (p. 279--288 of \cite{Var}).
A {\em parabolic subgroup} $P$ of a semisimple Lie group $G$ is the full normalizer of a parabolic subalgebra. \index{parabolic subgroup} 

We recall from Section \ref{subsub-char}. 
Since $\mathcal{O}$ is the interior of a compact orbifold, 
there exists a finite set of generators $g_1, \dots, g_m$ with finitely many relators. 
First, $\Hom(\pi_1(\mathcal{O}), \PGL(n+1,\bR))$ can be identified with a semi-algebraic subset of 
$\PGL(n+1,\bR)^m$ corresponding to the relators. 
Each end of $\orb$ is assigned to be an \hyperlink{term-cRend}{$\cR$-type end} 
or a \hyperlink{term-cTend}{$\cT$-type end}. 

Let $\Hom_{\mathcal E}(\pi_1(\mathcal{O}), \PGL(n+1,\bR))$ denote the subspace of 
\[\Hom(\pi_1(\mathcal{O}), \PGL(n+1,\bR))\]
where the holonomy of each p-end fundamental group fixes a point of $\rpnn^n$
for an end of type $\mathcal{R}$ or 
acts on a subspace $P$ of codimension-one and on a lens meeting $P$ 
satisfying the lens-condition or a horoball tangent to $P$ for an end of type $\mathcal{T}$. 
Since there are only finitely many p-end fundamental groups up to conjugation by elements of 
$\pi_{1}(\orb)$, we obtain that 
\[\Hom_{\mathcal E}(\pi_1(\mathcal{O}), \PGL(n+1,\bR))\] is a closed semi-algebraic subset
provided that is no $\mathcal{T}$-end. 
If there are $\mathcal{T}$-ends, then we obtain a union of open subsets of closed semi-algebraic subsets.


Since  each end fundamental group is finitely generated, 
the conditions of having a common $1$-dimensional eigenspace 
for each of a finite collection of finitely generated subgroups is a semi-algebraic condition. 


Let $\rho \in \Hom_{\mathcal E}(\pi_1(E), \PGL(n+1,\bR))$
where $E$ is a \hyperlink{term-horo}{horospherical end}. 
Then $\rho(\pi_1(E))$ is virtually abelian by Theorem 1.1 of \cite{endclass}. 
Define  
\[\Hom_{\mathcal E, \mathbbm{par}} (\pi_1(E), \PGL(n+1, \bR))\] 
to be the space of representations where an abelian group
of finite index goes into a parabolic subgroup in a copy of $\PO(n, 1)$. 
By Lemma \ref{lem-parab}, 
\[\Hom_{\mathcal E, \mathbbm{par}} (\pi_1(E), \PGL(n+1, \bR))\] 
is a closed semi-algebraic set. 

\begin{lemma} \label{lem-parab}
Let $G$ be a finite extension of a finitely generated free abelian group $\bZ^m$.
Then $\Hom_{\mathcal E, \mathbbm{par}} (G, \PGL(n+1, \bR))$ is a closed algebraic set. 
\end{lemma}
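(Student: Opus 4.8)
The plan is to realize $\Hom_{\mathcal E, \mathbbm{par}}(G, \PGL(n+1,\bR))$ as a finite union of closed algebraic sets by slicing along the conjugacy class of a fixed model parabolic. First I would fix, once and for all, a standard copy of $\PO(n,1)$ inside $\PGL(n+1,\bR)$ and a maximal unipotent (parabolic) subgroup $N$ of it; $N$ is a closed algebraic subgroup, abelian when $n=2$ and in general two-step nilpotent, but it contains a maximal abelian unipotent subgroup $N_0$ which is itself an algebraic subgroup. Write $G_0 \cong \bZ^m$ for the fixed finite-index free abelian subgroup and choose generators $h_1, \dots, h_m$ of $G_0$ together with coset representatives $\gamma_1, \dots, \gamma_s$ of $G_0$ in $G$. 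A representation $\rho$ lies in our set exactly when $\rho(G_0)$ is conjugate into such a parabolic; since all such model parabolics in all copies of $\PO(n,1)$ are conjugate in $\PGL(n+1,\bR)$, this is equivalent to: there exists $g \in \PGL(n+1,\bR)$ with $g\rho(h_i)g^{-1} \in N_0$ for all $i$.

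Next I would make this existential condition algebraic. Consider the algebraic map
\[
\Phi : \PGL(n+1,\bR) \times N_0^m \ra \PGL(n+1,\bR)^m, \qquad (g, u_1,\dots,u_m)\mapsto (g^{-1}u_1 g, \dots, g^{-1}u_m g).
\]
The image of $\Phi$ is a constructible set by Chevalley's theorem, and the fibre over a point of $\Hom(\pi_1(\mathcal O),\PGL(n+1,\bR))$ records precisely whether $\rho(G_0)$ is conjugate into $N_0$. Intersecting $\overline{\Phi(\PGL(n+1,\bR)\times N_0^m)}$ (Zariski closure) with $\Hom(\pi_1(\mathcal O), \PGL(n+1,\bR))$ evaluated on the generators of $G_0$ gives a closed algebraic set; the point is then to check that on the locus $\Hom(\pi_1(\mathcal O),\PGL(n+1,\bR))$, where each $\rho(h_i)$ is already known to be either trivial or a nontrivial parabolic (because $\rho \in \Hom_{\mathcal E}$ and $E$ horospherical forces, via Theorem 1.1 of \cite{endclass}, the relevant unipotency), the constructible image already coincides with its closure — equivalently, the condition ``conjugate into $N_0$'' is a closed condition when restricted to commuting tuples of parabolic-or-trivial elements, since degeneration of a conjugating sequence can only send such a tuple to another commuting tuple of parabolic-or-trivial elements fixing a common isotropic line, which is again conjugate into $N_0$. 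I would also need to add the conditions encoding that the finite-order coset elements $\rho(\gamma_j)$ normalize this abelian parabolic and act compatibly; these are again polynomial (closed) conditions once the common fixed isotropic line and the ambient $\PO(n,1)$ are pinned down, so taking the finite intersection keeps us inside the closed algebraic sets.

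Finally I would assemble the pieces: because there are finitely many conjugacy types of the pair (copy of $\PO(n,1)$ containing $\rho(G_0)$, parabolic subgroup thereof) are all mutually conjugate, a single model suffices, so no infinite union arises; and because $G_0$ has finite index, imposing the parabolicity on $\rho|_{G_0}$ plus the finitely many algebraic compatibility relations for the coset representatives cuts out $\Hom_{\mathcal E,\mathbbm{par}}(G,\PGL(n+1,\bR))$ as a closed algebraic subset of $\Hom_{\mathcal E}(G,\PGL(n+1,\bR))$, hence of $\PGL(n+1,\bR)^{\text{(\#generators)}}$. The main obstacle I expect is the closedness (as opposed to mere constructibility) of the image of $\Phi$ on the relevant locus: a priori Chevalley only gives a constructible set, and one must use the rigidity of the situation — that a limit of conjugates of elements of $N_0$ which remains parabolic-or-unipotent and which still commutes with its partners must again lie in a conjugate of $N_0$ — to upgrade to a genuine Zariski-closed (equivalently, since we are over $\bR$ and everything is semialgebraic, a closed semialgebraic and in fact algebraic) set. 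This rigidity is exactly where the hypothesis that we are inside $\Hom_{\mathcal E}$ with $E$ horospherical, via Theorem 1.1 of \cite{endclass}, does the essential work.
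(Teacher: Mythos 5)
Your proposal is correct and follows essentially the same route as the paper: fix one model parabolic $P$ (all being conjugate), observe $\Hom(\bZ^m,P)$ is closed semialgebraic, realize the set as the union of conjugates $\bigcup_g \Hom(\bZ^m, gPg^{-1})$, and then cut out the finite extension by finitely many further polynomial conditions. The one difference is that the paper simply \emph{asserts} that this union of conjugates is again closed semialgebraic, whereas you correctly identify that Tarski--Seidenberg/Chevalley only gives semialgebraicity a priori and sketch the degeneration argument needed to upgrade to closedness — a point worth making explicit (your only slip, that the unipotent radical of a maximal parabolic of $\PO(n,1)$ is two-step nilpotent for $n>2$, is immaterial: it is abelian for all $n$ in the real hyperbolic case).
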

\begin{proof}
Let $P$ be a maximal parabolic subgroup of a copy of $\PO(n+1, \bR)$ that fixes a point $x$. 
Then $\Hom(\bZ^m, P)$ is a closed semi-algebraic set.
\[\Hom_{\mathcal E, \mathbbm{par}}(\bZ^m, \PGL(n+1, \bR))\] equals a union 
\[\bigcup_{g\in  \PGL(n+1, \bR)} \Hom(\bZ^m, gPg^{-1}),\]
another closed semi-algebraic set.  
Now $\Hom_{\mathcal E, \mathbbm{par}}(G, \PGL(n+1, \bR))$ is a closed semi-algebraic subset of 
\[\Hom_{\mathcal E, \mathbbm{par}}(\bZ^m, \PGL(n+1, \bR)).\]
 \end{proof} 



  
  
  Let $E$ be an end orbifold of $\orb$. 
 Given \[\rho \in \Hom_{\mathcal E}(\pi_1(E), \PGL(n+1, \bR)),\] we define the following sets: 
\begin{itemize} 
\item 
Let $E$ be an end of type $\cR$.
Let \[\Hom_{\mathcal E, \mathbbm{RL}}(\pi_{1}(E),  \PGL(n+1,\bR))\] denote 
the space of  representations $h$ of $\pi_1(E)$
where $h(\pi_{1}(E))$ acts on a lens-cone $\{p\} \ast L $ for a lens $L$ and $p\not\in \clo(L)$ of a p-end $\tilde E$ 
corresponding to $E$ and the lens $L$ itself. 
Thus, it is an open subspace of the above semi-algebraic set
$\Hom_{\mathcal E}(\pi_1(E), \PGL(n+1, \bR))$.
\item 
Let $E$ denote an end of type $\cT$. 
Let \[\Hom_{\mathcal E, \mathbbm{TL}}(\pi_{1}(E),  \PGL(n+1,\bR))\] denote 
the space of totally geodesic representations $h$ of $\pi_1(E)$ satisfying 
the following condition: 
\begin{itemize}
\item $h(\pi_{1}(E))$ acts on an lens $L$ and a hyperspace $P$ where
\item $L^{o} \cap P \ne \emp$ and 
\item $L/h(\pi_{1}(E))$ is a compact orbifold with two strictly convex boundary components.
\end{itemize}  
\[\Hom_{\mathcal E, \mathbbm{TL}}(\pi_{1}(E),  \PGL(n+1,\bR))\] 
again an open subset of the semi-algebraic set 
\[\Hom_{\mathcal{E}}(\pi_{1}(E),  \PGL(n+1,\bR)).\] 
(This follows by the proof of Theorem 8.1 of \cite{endclass}.) 
\end{itemize} 

 Let 
\[ R_{E}: \Hom(\pi_1(\mathcal{O}), \PGL(n+1, \bR)) \ni h \ra h|\pi_{1}(E) \in \Hom(\pi_1(E), \PGL(n+1, \bR)) \] 
be the restriction map to the p-end fundamental group $\pi_1(E)$ corresponding to the end $E$ of $\mathcal{O}$. 
 
 A {\em representative set} of p-ends of $\torb$ is the subset of p-ends where 
 each end of $\orb$ has a corresponding p-end and a unique corresponding p-end.
 Let $\mathcal{R}_{\orb}$ denote the representative set of p-ends of $\torb$ of type $\mathcal{R}$,  
 and let $\mathcal{T}_{\orb}$ denote the representative set of p-ends of $\torb$ of type $\mathcal{T}$. 
We define a more symmetric space:
\[\Hom_{\mathcal{E}, ce}^s(\pi_1(\mathcal{O}), \PGL(n+1,\bR))\] to be
 {\small
 \begin{align} \label{eqn:euce2}
  & \Hom^s(\pi_1(\mathcal{O}), \PGL(n+1,\bR))\, \cap   \nonumber \\
 &  \bigg(\bigcap_{E \in {\mathcal{R}_{\orb}}} R_{E}^{-1}\Big(\Hom_{\mathcal E, \mathbbm{par}}(\pi_{1}(E),  \PGL(n+1,\bR)) \cup 
  \Hom_{\mathcal E, \mathbbm{RL}}(\pi_{1}(E),  \PGL(n+1,\bR))\Big)\bigg) \cap \nonumber \\
  &  \bigg(\bigcap_{E \in {\mathcal{T}_{\orb}}} R_{E}^{-1}\Big(\Hom_{\mathcal E, \mathbbm{par}}(\pi_{1}(E),  \PGL(n+1,\bR)) \cup \Hom_{\mathcal E, \mathbbm{TL}}(\pi_{1}(E),  \PGL(n+1,\bR))\Big)\bigg).
  \nonumber
\end{align} 
}

Hence, this is a union of open subsets of semi-algebraic sets in \[X:=\Hom^s_{\mathcal E}(\pi_1(\mathcal{O}), \PGL(n+1,\bR)).\]
(We don't claim that the union is open in $X$. 
These definitions allow for changes between horospherical ends to lens-type radial ones and totally geodesic ones.)


Let $\Hom^s_{\mathcal E, {\mathrm{u}}}(\pi_1(\mathcal{O}), \PGL(n+1,\bR))$ denote the subspace 
of \[\Hom^s_{\mathcal E}(\pi_1(\mathcal{O}), \PGL(n+1,\bR))\]
where each element $h$ satisfies the following properties:
\begin{itemize}
\item $h|\pi_{1}(\tilde E)$ fixes a unique point of $\rpnn^n$ corresponding to the common eigenspace of 
positive eigenvalues for lifts of elements of 
the p-end fundamental group $\pi_{1}(\tilde E)$ 
of $\cR$-type (recall Remark \ref{rem:SL}) and 
\item $h|\pi_{1}(\tilde E)$  has a common null-space $P$ of an eigen-$1$-forms 
which is unique under the condition that 
\begin{itemize}
\item $\pi_{1}(\tilde E)$ acts properly on a lens $L$ with $L \cap P$ with nonempty interior in $P$
or 
\item $H-\{p\}$ for a horosphere $H$ tangent to $P$ at $p$
\end{itemize} 
for each p-end fundamental group $\pi_{1}(\tilde E)$ of the end of $\mathcal{T}$-type.
\end{itemize}
We obtain the union of open subsets of semi-algebraic subsets since we need to consider finitely many generators of 
the fundamental groups of the ends  again by Lemma \ref{lem:ucont}. 

\begin{remark} \label{rem:redund} 
The lens condition is equivalent to the condition here.  We repeat it here to put these into set theoretical terms.
\end{remark} 

Since  \[\rep_{{\mathcal E}, \mathrm{u}}^s(\pi_1(\mathcal{O}), \PGL(n+1,\bR))\]
is the Hausdorff quotient of the above set with the conjugation $\PGL(n+1, \bR)$-action, 
this is the union of open subsets of semi-algebraic subset 
by Proposition 1.1 of \cite{JM}.


 We define \[\Hom_{\mathcal E, \mathrm{u, ce}}^s(\pi_1(\mathcal{O}), \PGL(n+1,\bR))\] to be the subset 
 {\small
 \begin{align} \label{eqn:Euce2}
  & \Hom_{\mathcal E, \mathrm{u}}^s(\pi_1(\mathcal{O}), \PGL(n+1,\bR)) \cap  \nonumber \\
  &  \bigg(\bigcap_{E \in {\mathcal{R}_{\orb}}} R_{E}^{-1}\Big(\Hom_{\mathcal E, \mathbbm{par}}\big(\pi_{1}(E),  \PGL(n+1,\bR)\big) \cup \Hom_{\mathcal E, \mathbbm{RL}}\big(\pi_{1}(E),  \PGL(n+1,\bR)\big)\Big)\bigg) \cap \nonumber \\
  &\bigg(\bigcap_{E \in {\mathcal{T}_{\orb}}} R_{E}^{-1}\Big(\Hom_{\mathcal E, \mathbbm{par}}\big(\pi_{1}(E),  \PGL(n+1,\bR)\big) \cup \Hom_{\mathcal E, \mathbbm{TL}}\big(\pi_{1}(E),  \PGL(n+1,\bR)\big)\Big)\bigg).
  \nonumber
\end{align} 
}

 Similarly to the above,  \[\rep_{{\mathcal E}, \mathrm{u, ce}}^s(\pi_1(\mathcal{O}), \PGL(n+1,\bR))\]
is a union of open subsets of strata in  
\[\rep_{{\mathcal E}, \mathrm{u}}^s(\pi_1(\mathcal{O}), \PGL(n+1,\bR)).\]


\begin{example} \label{exmp-endv2}
The uniqueness as above holds automatically for convex real projective orbifolds where a set of orbifold singularities 
contains a leaf of a radial foliation of an R-end neighborhood
as in Example \ref{exmp-endv}. 
By Theorem \ref{thm:niceend}, we obtain that these have lens-shaped radial ends only. 
\end{example}

\begin{lemma}\label{lem-niceend} 
Suppose that $\orb$ is a strongly tame real projective orbifold with radial ends. 
Suppose the end fundamental group of an end $E$ is 
\begin{itemize}
\item virtually generated by finite order elements
and 
\item is virtually abelian or is hyperbolic. 
\end{itemize} 
Suppose that the end orbifold $\Sigma_{E}$ is convex. 
Then the end $E$ is either properly convex lens-type radial end or 
is horospherical. 
\end{lemma}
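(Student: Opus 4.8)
The plan is to read off the structure of the end orbifold from hypotheses (i) and (ii) via Benoist's decomposition and then invoke the classification of radial ends of \cite{endclass} and \cite{End1}. Fix a p-end $\tilde E$ over $E$ with p-end vertex $v_{\tilde E}$, and let $\tilde\Sigma_{\tilde E}\subset \SI^{n-1}_{v_{\tilde E}}$ be the convex domain on which $\pi_1(\tilde E)$ acts cocompactly with quotient $\Sigma_E$; this data is well defined by Proposition \ref{prop:endf}. First I would record the only consequence of (i) that is used: if a finite-index $\Gamma'\leq\pi_1(\tilde E)$ is generated by finite-order elements, then any homomorphism $\phi\colon\pi_1(\tilde E)\ra\bR$ kills every generator of $\Gamma'$, hence kills $\Gamma'$, hence kills $\pi_1(\tilde E)$ since every element has a power in the core of $\Gamma'$; thus $\mathrm{Hom}(\pi_1(\tilde E),\bR)=0$. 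Using the positive lifts of Remark \ref{rem:SL}, this forces the eigenvalue of $\pi_1(\tilde E)$ at $v_{\tilde E}$ to be identically $1$, and it forces every $\log$-eigenvalue character attached to a $\pi_1(\tilde E)$-invariant line, and every $\log$-determinant character of an invariant block, to be trivial.

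Next I would split according to whether $\tilde\Sigma_{\tilde E}$ is properly convex. If it is, Proposition \ref{prop-Ben2} writes $\tilde\Sigma_{\tilde E}$ as a strict join $K_1\ast\cdots\ast K_{l_0}$ with $\pi_1(\Sigma_E)$ virtually $\bZ^{l_0-1}\times\Gamma_1\times\cdots\times\Gamma_{l_0}$; hypothesis (ii) then forces either $l_0=1$ with $\tilde\Sigma_{\tilde E}$ strictly convex and $C^1$ (the nonelementary hyperbolic case) or all $K_i$ points with $\tilde\Sigma_{\tilde E}$ an $(n-1)$-simplex and a finite extension of $\bZ^{n-1}$ acting diagonally (the virtually abelian case; a virtually cyclic factor on a segment is excluded, as it would give a nontrivial $\log$-character). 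In both cases $\pi_1(\tilde E)$ is an admissible group, and by the first paragraph the eigenvalue at $v_{\tilde E}$ equals $1$ while, for each infinite-order $\gamma$, the action on $\SI^{n-1}_{v_{\tilde E}}$ has determinant $1$ and is proximal (resp. diagonal with nontrivial $\log$-vector), so its largest eigenvalue exceeds $1$ and its smallest is below $1$. Hence the vertex eigenvalue lies strictly between the extremal eigenvalues of $\gamma$ and the uniform middle eigenvalue condition holds trivially; by Theorem 5.1 of \cite{End1} (see also Proposition 4.6 of \cite{End1}) $E$ is of lens-type, with properly convex lens since $\tilde\Sigma_{\tilde E}$ is. If instead $\tilde\Sigma_{\tilde E}$ is not properly convex, Proposition \ref{prop-projconv} makes the holonomy of $\pi_1(\tilde E)$ on the directions virtually reducible, and (ii) then forces the virtually abelian case, since a nonelementary word-hyperbolic group cannot act cocompactly on a non-properly-convex convex domain (such a domain and its dividing group split off an affine factor). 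The triviality of all $\log$-characters rules out a nontrivial properly convex factor in the radical decomposition of $\tilde\Sigma_{\tilde E}$, so $\tilde\Sigma_{\tilde E}$ is a complete affine space $\bR^{n-1}$ on which $\pi_1(\tilde E)$ acts by Euclidean isometries (crystallographic by Bieberbach), fixing $v_{\tilde E}$ parabolically; by the classification of horospherical ends in \cite{endclass} (Theorem 1.1 and Section 3.1), $\pi_1(\tilde E)$ preserves a horoball with vertex $v_{\tilde E}$, so $E$ is horospherical. The low-dimensional exceptions $\dim\Sigma_E\leq 1$ are disposed of by hand.

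The main obstacle is the non-properly-convex, virtually abelian case: one must rule out the intermediate ``NPNC'' (not properly convex, not complete) radial ends that occur in the general classification, and the point is exactly that such an end carries a nontrivial $\log$-eigenvalue character of $\pi_1(\tilde E)$, which hypothesis (i) forbids. Turning this observation into a clean identification of $\tilde\Sigma_{\tilde E}$ with a complete affine space bearing a crystallographic action, and then producing the invariant horoball, is the real content, and it is precisely here that the structural results of \cite{endclass} are brought to bear.
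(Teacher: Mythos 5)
The paper itself never proves this lemma: it is a survey, and the proof is deferred to the end--classification papers \cite{endclass}, \cite{End1}--\cite{End3}; the only visible trace of the intended argument is the proof of Corollary \ref{cor:hyperbolic}, which first pins down the projective structure on the end orbifold (properly convex or complete affine, via \cite{BenTor} and the presence of order-$3$ or order-$6$ torsion) and then quotes ``the proof of Proposition 4.6 of \cite{End1}.'' Your proposal follows essentially that route --- trichotomize the convex domain $\tilde\Sigma_{\tilde E}$, send the properly convex case to a lens via the uniform middle eigenvalue condition and Theorem 5.1 of \cite{End1}, and the complete affine case to a horosphere via \cite{endclass} --- and your observation that hypothesis (i) forces $\Hom(\pi_1(\tilde E),\bR)=0$, hence vertex eigenvalue identically $1$, is exactly the right lever for the middle eigenvalue condition, since $v_{\tilde E}$ is fixed by the whole group and its log-eigenvalue is therefore a genuine character of $\pi_1(\tilde E)$.

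There is, however, a gap in the step you yourself flag as the real content: excluding the intermediate case where $\tilde\Sigma_{\tilde E}$ is convex but neither properly convex nor complete affine. Your mechanism is that ``every log-eigenvalue character attached to an invariant line and every log-determinant character of an invariant block'' is trivial, but this only applies to subspaces invariant under all of $\pi_1(\tilde E)$. The characters that actually detect the intermediate (NPNC) structures --- and likewise the joint eigenvalue characters in Benoist's join decomposition --- live on a finite-index abelian subgroup whose invariant lines are merely permuted by the full group, and $\Hom(G,\bR)=0$ for $G$ generated by torsion does not kill characters of finite-index subgroups: the infinite dihedral group is generated by involutions and has $\Hom(D_\infty,\bR)=0$, yet its translation subgroup acts on a segment with nontrivial eigenvalue characters (and indeed such properly convex lens-type ends are allowed by the conclusion, so no argument of this shape can be uniformly applied). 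To close the case you must either show that the canonical invariant flag of an NPNC domain produces a character of the \emph{full} group that hypothesis (i) kills, or do what the paper does for its corollaries: invoke the explicit classification of convex projective structures with virtually abelian holonomy (\cite{BenNil}, \cite{BenTor}, or Vinberg's theory in the Coxeter case) together with the finite-order symmetries to rule out the intermediate bricks, and then appeal to the NPNC R-end classification of \cite{End2}. As written, the sentence ``the triviality of all log-characters rules out a nontrivial properly convex factor'' is not justified.
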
 

\begin{theorem} \label{thm:niceend}
Suppose that $\orb$ is a strongly tame properly convex real projective orbifold with radial ends. 
Suppose that each end fundamental group is 
\begin{itemize}
\item virtually generated by finite order elements
and 
\item is virtually abelian or is hyperbolic. 
\end{itemize} 
Then the holonomy is in 
\[\Hom_{\mathcal E, \mathrm{u, ce}}^s(\pi_{1}(\orb), \PGL(n+1, \bR)).\] 
\end{theorem}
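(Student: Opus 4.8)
The plan is to deduce Theorem~\ref{thm:niceend} from Lemma~\ref{lem-niceend} together with the stability and uniqueness results already assembled in the paper. First I would observe that the hypotheses of Lemma~\ref{lem-niceend} are met for each end: each end orbifold $\Sigma_E$ of a properly convex real projective orbifold is itself convex (its universal cover develops onto a convex domain, by Proposition~\ref{prop-projconv} applied to the induced structure, or directly since a p-end neighborhood is convex), and the end fundamental group is by hypothesis virtually generated by finite order elements and virtually abelian or hyperbolic. Hence Lemma~\ref{lem-niceend} gives that every end $E$ is either a properly convex lens-type radial end or horospherical. This already places the holonomy of each p-end fundamental group in the correct stratum: for lens-type radial ends we land in $\Hom_{\mathcal E, \mathbbm{RL}}(\pi_1(E), \PGL(n+1,\bR))$ by definition, and for horospherical ends we land in $\Hom_{\mathcal E, \mathbbm{par}}(\pi_1(E), \PGL(n+1,\bR))$, using Theorem~1.1 of \cite{endclass} which forces a horospherical end holonomy to be virtually parabolic inside a copy of $\PO(n,1)$.

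Next I would address the uniqueness requirements defining $\Hom_{\mathcal E, \mathrm{u}}^s$. For a lens-type radial p-end $\tilde E$, the p-end vertex $v_{\tilde E}$ is the cone point of the lens-cone; I would argue it is the \emph{unique} common fixed point with positive eigenvalue for the lift of $\pi_1(\tilde E)$, using the admissibility/lens structure: the action on the lens $L$ forces the eigenvalue directions transverse to $L$ to be extremal, so the cone point direction is singled out (this is essentially the uniform middle eigenvalue discussion invoked in the examples after Definition~\ref{defn-admissible}). For a horospherical end the fixed point on the ellipsoid boundary is visibly unique since the parabolic subgroup of $\PO(n,1)$ fixes a single point of the absolute. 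For $\cT$-type ends — which do not occur here since all ends are radial — the condition is vacuous. Combining, the restriction of the holonomy to each p-end fundamental group lies in $\Hom_{\mathcal E, \mathrm{u, ce}}^s(\pi_1(\orb),\PGL(n+1,\bR))$ at the level of each end, i.e.\ in the intersection of the $R_E^{-1}(\cdots)$ sets.

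Finally I would supply the global stability. Since $\orb$ is strongly tame and properly convex with (now established) generalized admissible ends, I would like to invoke Theorem~\ref{thm-sSPC}: any finite-index subgroup of the holonomy group is strongly irreducible and not contained in a proper parabolic subgroup, hence by Theorem~1.1 of \cite{JM} the holonomy is stable. The wrinkle is that Theorem~\ref{thm-sSPC} is stated under the additional hypotheses (IE) and (NA), which are not among the hypotheses of Theorem~\ref{thm:niceend}; so the main obstacle is to either verify (IE) and (NA) from ``virtually generated by finite order elements and virtually abelian or hyperbolic,'' or to argue stability directly. I expect (IE) to follow because an end fundamental group with the stated structure cannot be finite index in $\pi_1(\orb)$ unless $\orb$ is closed or itself an end-like orbifold, and (NA) to follow because the end groups, being virtually abelian or hyperbolic of the relevant flavor, cannot share large subgroups; alternatively one restricts to the interesting case and cites the relevant reduction. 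Once stability is in hand, the holonomy lies in $\Hom^s(\pi_1(\orb),\PGL(n+1,\bR))$, and intersecting with the end-stratum membership from the previous paragraph yields membership in $\Hom_{\mathcal E, \mathrm{u, ce}}^s(\pi_1(\orb), \PGL(n+1,\bR))$, completing the proof. The bulk of the real work is therefore Lemma~\ref{lem-niceend} itself (assumed) plus this stability bookkeeping; the eigenvalue-uniqueness step is the place where I would be most careful.
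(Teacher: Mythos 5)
The paper itself gives no argument for this theorem beyond the single sentence ``We need the end classification results from \cite{End1}, \cite{End2}, and \cite{End3},'' so your reconstruction via Lemma~\ref{lem-niceend} plus the uniqueness discussion is a reasonable guess at the intended route for the end-stratum conditions (the ``$\mathrm{u}$'' and ``$\mathrm{ce}$'' subscripts). The first paragraph of your proposal is essentially right: proper convexity of $\torb$ does give convexity of the end orbifold (the space of directions into a convex set from a boundary point is convex), Lemma~\ref{lem-niceend} then yields ``lens-type or horospherical,'' and those two alternatives are by definition membership in $\Hom_{\mathcal E, \mathbbm{RL}}$ and $\Hom_{\mathcal E, \mathbbm{par}}$ respectively.

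The genuine gap is your treatment of the superscript $s$. You correctly observe that Theorem~\ref{thm-sSPC} is the only stability result in the paper and that it assumes (IE) and (NA), which are not hypotheses of Theorem~\ref{thm:niceend}; but your proposed recovery of these conditions does not work. Condition (NA), as defined in Definition~\ref{defn:IE}, is not merely a non-sharing condition: it also \emph{requires} that some end fundamental group contain a free abelian subgroup of rank $2$. If every end group of $\orb$ is hyperbolic (a case explicitly allowed by the hypotheses), no end group contains $\bZ^2$ and (NA) fails outright, so it cannot be ``verified'' from the hypotheses. Likewise (IE) is an honest additional assumption: nothing in ``virtually generated by finite order elements and virtually abelian or hyperbolic'' prevents an end fundamental group from having finite index in $\pi_1(\orb)$, in which case the holonomy fixes a point and is reducible, hence not stable. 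So either the theorem silently inherits (IE)/(NA) (or stability) from its context of use --- note that every downstream application in the paper, e.g.\ Corollary~\ref{cor-closed2}, does assume (IE) and (NA) --- or stability must be extracted from the external end-classification papers; it cannot be derived from the material you cite. A secondary soft spot, which you flagged yourself, is the uniqueness step for virtually abelian ends: your ``extremal transverse eigenvalue'' argument singles out a vertex satisfying the middle-eigenvalue condition, but for a diagonalizable end group acting on a join there may be several such candidate vertices, and the paper's own Example~\ref{exmp-endv2} indicates that uniqueness there is obtained from extra geometric input (a singular radial leaf), not from the lens structure alone. This step too ultimately rests on the classification in \cite{End1}--\cite{End3} rather than on anything proved in this paper.
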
 

We need the end classification results from \cite{End1}, \cite{End2}, and \cite{End3}.

We immediately obtain: 
\begin{corollary} \label{cor:hyperbolic} 
	Let $M$ be a real projective 
	orbifold with radial ends. 
Suppose that $M$ admits finite-volume hyperbolic $3$-orbifold with ends that are \hyperlink{term-horo}{horospherical}. 
Suppose that the end orbifold is either a small orbifold with cone points of orders $3$ or a disk orbifold with corner reflectors orders $6$.
Then the ends must be of lens-type R-ends or horospherical R-ends. 
\end{corollary}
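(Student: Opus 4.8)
The plan is to reduce the statement directly to Theorem~\ref{thm:niceend} (equivalently, to apply Lemma~\ref{lem-niceend} end by end), so that the only work left at the level of the corollary is to check that the two families of end orbifolds named in the hypothesis satisfy the group-theoretic conditions of that theorem. I read $M$ as a properly convex real projective orbifold with radial ends, as Theorem~\ref{thm:niceend} requires. Strong tameness of $M$ comes for free: $M$ is homeomorphic to a finite-volume hyperbolic $3$-orbifold whose ends are horospherical, so the complement of a compact core is a disjoint union of products $\Sigma_E\times(0,1]$.

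The first step is to pin down the end orbifolds. Since $M$ underlies a finite-volume hyperbolic $3$-orbifold with horospherical ends, each end orbifold $\Sigma_E\cong \tilde S_{\tilde E}/\Gamma_{\tilde E}$ is a closed Euclidean $2$-orbifold, so $\Gamma_{\tilde E}=\pi_1(\Sigma_E)$ is a $2$-dimensional crystallographic group, in particular virtually $\bZ^2$ and hence virtually abelian. Under the stated hypothesis $\Sigma_E$ is one of the Euclidean ``small'' $2$-orbifolds carrying singularities of order $3$ or $6$ --- for instance the sphere $S^2(3,3,3)$ or the $(2,3,6)$-triangle reflection orbifold --- so $\Gamma_{\tilde E}$ is generated by the finite-order rotations about the cone points, together with, in the disk case, the order-$2$ reflections across the mirror edges. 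Thus $\Gamma_{\tilde E}$ is virtually generated by finite order elements, and in particular is not a torus or Klein bottle group (which is exactly what separates these ends from the exotic non-lens ends). The last hypothesis of Lemma~\ref{lem-niceend}, convexity of $\Sigma_E$, is automatic because $\tilde S_{\tilde E}=R_{v_{\tilde E}}(\torb)$ inherits a properly convex real projective structure from the properly convex $\torb$ (Section~\ref{sub-pend}, Proposition~\ref{prop-projconv}). Hence the hypotheses of Theorem~\ref{thm:niceend} and of Lemma~\ref{lem-niceend} all hold.

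Now Lemma~\ref{lem-niceend}, applied to each end, says directly that every end of $M$ is either a properly convex lens-type radial end or a horospherical end, which is the assertion. (Equivalently, Theorem~\ref{thm:niceend} puts the holonomy in $\Hom^{s}_{\mathcal E,\,\mathrm{u, ce}}(\pi_1(M),\PGL(4,\bR))$, whose defining conditions say precisely that the restriction to each $\cR$-type p-end group $\pi_1(\tilde E)$ lies in $\Hom_{\mathcal E,\mathbbm{par}}(\pi_1(\tilde E),\PGL(4,\bR))\cup\Hom_{\mathcal E,\mathbbm{RL}}(\pi_1(\tilde E),\PGL(4,\bR))$, i.e.\ $\Gamma_{\tilde E}$ preserves a horoball or a lens-cone; since $\torb$ is properly convex and the end is radial, that horoball or lens-cone is realized as a genuine p-end neighborhood of $\tilde E$.) The step that carries the weight is invisible at the level of the corollary: it is the passage from the bare holonomy condition (``$\Gamma_{\tilde E}$ preserves some lens-cone or horoball'') to the geometric conclusion that this lens-cone or horoball is an actual p-end neighborhood inside $\torb$, which is the substance of Theorem~\ref{thm:niceend} and rests on the end-classification results of \cite{End1}, \cite{End2}, \cite{End3} (compare Theorem~5.1 of \cite{End1}). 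Within the corollary itself the only genuine verification is the elementary $2$-orbifold bookkeeping above, together with the use of proper convexity of $M$ to upgrade these abstract group actions to honest p-end neighborhoods.
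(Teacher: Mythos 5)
Your argument is essentially correct under your stated reading that $M$ is properly convex, and it reaches the same final step as the paper (the end-classification input: Lemma~\ref{lem-niceend}, which is the proof of Proposition~4.6 of \cite{End1}), but it gets to the crucial intermediate hypothesis --- convexity of the end orbifold $\Sigma_E$ --- by a genuinely different route. You derive it from the global proper convexity of $\torb$: the radial projection $R_{v_{\tilde E}}(\torb)$ of a convex domain from a boundary point is convex (a correct observation, though ``properly convex'' is an overstatement --- for a horospherical end this space of directions is complete affine, which is all Lemma~\ref{lem-niceend} needs anyway). The paper instead argues intrinsically: the end fundamental group is virtually $\bZ^2$, so $\Sigma_E$ is finitely covered by a real projective $2$-torus, and by the Nagano--Yagi/Goldman/Benoist classification of real projective structures on $T^2$ the presence of the order $3$ or $6$ singularities forces that torus to be properly convex or complete affine. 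The trade-off is real: the paper's route needs no global convexity assumption on $M$ at all (the corollary as literally stated only says ``real projective orbifold with radial ends''), and it explains why the order-$3$/$6$ hypothesis is there --- it is exactly what excludes the non-convex projective tori. In your version that hypothesis is demoted to checking that the end group is virtually generated by finite-order elements, and the weight is carried instead by an extra hypothesis (proper convexity of $M$) that you have read into the statement. Since the corollary is applied in Section~\ref{sub:mainresults} to properly convex orbifolds, your proof covers the intended use; but it proves a formally weaker statement than the paper's argument does, and you should at least flag that the convexity of $\Sigma_E$ can also be obtained without assuming $M$ properly convex, via the torus classification.
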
 
\begin{proof} 
Each end fundamental group is virtually abelian of rank $2$. 
Hence, the end orbifold is finitely covered by a $2$-torus.
By the classification of real projective $2$-torus \cite{BenTor}, and the existence of order $3$ or $6$ singularities, 
the torus must be properly convex or complete affine. 
By the proof of Proposition 4.6 of \cite{End1}, the end is either horospherical or of lens-type. 
(Here we just need that the end orbifold be convex.)
\end{proof}
This result may be generalized to higher dimensions but we lack the formulation. 




\subsubsection{Main theorems}\label{subsub-maintheorems} 

We now state our main results:


\begin{itemize}
\item 
We define $\Def^s_{{\mathcal E}, ce}(\mathcal{O})$ to be the subspace of $\Def_{{\mathcal E}}(\mathcal{O})$ 
with real projective structures with \hyperlink{term-addg}{generalized admissible ends} and \hyperlink{term-st}{stable} irreducible holonomy homomorphisms.


\item 
We define 
$\CDef_{{\mathcal E}, \mathrm{u, ce}}(\mathcal{O})$ to be the subspace of 
$\Def_{\mathcal E, \mathrm{u}}(\mathcal{O})$ consisting of \hyperlink{term-spc}{SPC-structures} with \hyperlink{term-addg}{generalized admissible ends}.

\item We define 
$\SDef_{{\mathcal E}, \mathrm{u, ce}}(\mathcal{O})$ to be the subspace of 
$\Def_{{\mathcal E}, \mathrm{u, ce}}(\mathcal{O})$ consisting of \hyperlink{term-sspc}{strict SPC-structures} with \hyperlink{term-spc}{admissible ends}.
\end{itemize} 

We remark that these spaces are dual to the same type of the spaces but switching the $\mathcal{R}$-end with $\mathcal{T}$-ends and vice versa
by Proposition \ref{prop-duality}.

\begin{theorem}\label{thm-B} 
\hypertarget{thm-BC}{Let $\mathcal{O}$ be a noncompact strongly tame $n$-orbifold with generalized admissible ends. }
Assume $\partial \orb =\emp$. 
Suppose that $\mathcal{O}$  satisfies {\rm (IE)} and {\rm (NA).}  
Then 
the subspace  
\[\CDef_{{\mathcal E}, \mathrm{u, ce}}(\mathcal{O}) \subset \Def^s_{\mathcal E, \mathrm{u, ce}}(\mathcal{O})\]  is open.

Suppose further that every finite-index subgroup of $\pi_1(\mathcal{O})$ contains no nontrivial infinite nilpotent normal subgroup.
Then \hyperlink{term-hol}{$\hol$} maps
$\CDef_{{\mathcal E}, \mathrm{u, ce}}(\mathcal{O})$  homeomorphically to a union of components of 
 \[\rep_{{\mathcal E}, \mathrm{u, ce}}^{s}(\pi_1(\mathcal{O}), \PGL(n+1, \bR)).\]
\end{theorem}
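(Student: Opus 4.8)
The plan is to run the Ehresmann--Thurston--Weil argument on top of the local homeomorphism of Theorem \ref{thm-A}. Since $\CDef_{{\mathcal E}, \mathrm{u, ce}}(\mathcal{O})$ sits inside $\Def^s_{{\mathcal E}, \mathrm{u}}(\mathcal{O})$ --- an SPC-structure is stable and irreducible, and its generalized admissible ends are radial or lens-type totally geodesic --- Theorem \ref{thm-A} already tells us that $\hol$ is a local homeomorphism on $\CDef_{{\mathcal E}, \mathrm{u, ce}}(\mathcal{O})$, hence an open map. An injective open continuous map is a homeomorphism onto its image, and if in addition that image is closed it is a union of connected components of $\rep_{{\mathcal E}, \mathrm{u, ce}}^{s}(\pi_1(\mathcal{O}), \PGL(n+1, \bR))$. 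So the three things to prove are: (i) $\CDef_{{\mathcal E}, \mathrm{u, ce}}(\mathcal{O})$ is open in $\Def^s_{{\mathcal E}, \mathrm{u, ce}}(\mathcal{O})$; (ii) $\hol$ is injective on $\CDef_{{\mathcal E}, \mathrm{u, ce}}(\mathcal{O})$; (iii) $\hol(\CDef_{{\mathcal E}, \mathrm{u, ce}}(\mathcal{O}))$ is closed in the target. Only (ii) and (iii) will use the hypothesis on nilpotent normal subgroups.

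For (i), I would show that ``properly convex with generalized admissible, unique end data'' is an open condition inside $\Def^s_{{\mathcal E}, \mathrm{u, ce}}(\mathcal{O})$. Stability and irreducibility are open, and the end conditions defining the target are unions of open subsets of semi-algebraic strata: openness of the lens loci $\Hom_{\mathcal E, \mathbbm{RL}}$ and $\Hom_{\mathcal E, \mathbbm{TL}}$, of the parabolic locus (Lemma \ref{lem-parab}), and of the uniqueness clause via the upper semicontinuity of Lemma \ref{lem:ucont}. Hence a real projective structure near a point of $\CDef_{{\mathcal E}, \mathrm{u, ce}}(\mathcal{O})$ still has each p-end neighborhood a (generalized) lens-cone or a horoball quotient with a unique associated vertex or hyperplane. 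Given this strong control near all ends, one shows the developing map of the nearby structure remains an embedding onto a convex domain, by a Koszul-type deformation argument on the compact core $\mathcal{O}$ cut along the end neighborhoods glued to the explicitly convex end pieces, and then upgrades ``convex'' to ``properly convex'' via Proposition \ref{prop-projconv}, since stability forbids virtually reducible holonomy. The technical version of this step is carried out in \cite{convMa}.

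For (ii), suppose $\mu_1,\mu_2 \in \CDef_{{\mathcal E}, \mathrm{u, ce}}(\mathcal{O})$ have conjugate holonomy; after conjugating, $h_1 = h_2 =: h$, with developing images properly convex domains $\Omega_1, \Omega_2$, both $h(\pi_1(\mathcal{O}))$-invariant. By Theorem \ref{thm-sSPC}, $h(\pi_1(\mathcal{O}))$ is strongly irreducible; I would argue that a strongly irreducible group preserves at most one properly convex open domain --- each $\Omega_j$ being the interior of the convex hull of the (holonomy-intrinsic) limit set of $h$ in its boundary --- so $\Omega_1 = \Omega_2 =: \Omega$. Then the ``u'' condition determines the p-end vertices and p-ideal-boundary hyperplanes uniquely from $h$, so the radial foliations and totally geodesic ideal boundary components of $\mu_1$ and $\mu_2$ coincide inside $\Omega/h(\pi_1(\mathcal{O}))$; an isotopy respecting the end markings then gives $[\mu_1] = [\mu_2]$ in $\CDef_{{\mathcal E}, \mathrm{u, ce}}(\mathcal{O})$.

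For (iii), let $[\rho_i] \to [\rho_\infty]$ with each $[\rho_i] \in \hol(\CDef_{{\mathcal E}, \mathrm{u, ce}}(\mathcal{O}))$ and $[\rho_\infty] \in \rep_{{\mathcal E}, \mathrm{u, ce}}^{s}(\pi_1(\mathcal{O}), \PGL(n+1, \bR))$; choose representatives $\rho_i \to \rho_\infty$ and SPC-structures with developing images $\Omega_i$, and pass to a geometrically convergent subsequence $\Omega_i \to \Omega_\infty$ (compactness in the Hausdorff metric, Lemma \ref{lem:bdconv}). One must show $\Omega_\infty$ is an $n$-dimensional properly convex domain on which $\rho_\infty(\pi_1(\mathcal{O}))$ acts properly discontinuously with quotient orbifold-diffeomorphic to $\mathcal{O}$ and with generalized admissible, unique end data. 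Non-collapse of $\Omega_\infty$ is forced near each end: $\rho_\infty|\pi_1(\tilde E)$ still lies in $\Hom_{\mathcal E, \mathrm{u, ce}}$, hence acts on a lens or horoball with a unique vertex/hyperplane, which keeps the limiting end neighborhoods full-dimensional, while the relative hyperbolicity of $\pi_1(\mathcal{O})$ (Theorem \ref{thm-relhyp}) and the admissible-group structure of the ends control $\Bd \Omega_i$ along the way. Proper convexity of $\Omega_\infty$ again follows from stability of $\rho_\infty$ via Proposition \ref{prop-projconv}. Here the nilpotent hypothesis enters: by Proposition \ref{prop-Benoist} it makes the end orbifolds $S_E$ have fundamental groups with trivial virtual center, so Benoist's deformation theorem (Theorem \ref{thm-Benoist}) applies to each $S_E$ and keeps the limiting end action cocompact on a properly convex domain; combined with the end classification of \cite{End1}, \cite{End2}, \cite{End3} (see Theorem \ref{thm:niceend}) this identifies the ends of $\Omega_\infty/\rho_\infty(\pi_1(\mathcal{O}))$ as the required generalized admissible ones with unique end data, so $[\rho_\infty]$ is realized. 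The hard part of the whole argument is exactly this step (iii): the non-degeneration of the geometric limit $\Omega_\infty$ together with matching its ends to the imposed character-variety conditions forces one to orchestrate relative hyperbolicity, openness of the lens conditions, the uniqueness clause, Benoist's rigidity for cocompact convex divisible actions, and the end classification all at once; step (i) is substantial but is essentially a local compactness argument over the compact core.
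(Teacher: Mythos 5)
Your overall architecture coincides with the paper's: Theorem \ref{thm-A} supplies the local homeomorphism, and the theorem is reduced to (i) openness of $\CDef_{{\mathcal E}, \mathrm{u, ce}}(\mathcal{O})$ in $\Def^s_{\mathcal E, \mathrm{u, ce}}(\mathcal{O})$ (Section \ref{sub-open}: Koszul--Vinberg Hessian metrics on the cone over the compact core patched with Hessian metrics on the cones over the convex end pieces), (ii) injectivity of $\hol$ on the convex part, and (iii) closedness of the image (Section \ref{sub-closed}: geometric limits $K_i \ra K$ of the developing images in $\SI^n$, with the degenerate and non-properly-convex cases excluded because they force a reducible limit holonomy, contradicting stability). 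Your (i) and (iii) track the paper's sketch, with two caveats: in (i) you gloss over the point the paper takes pains with, namely promoting the holonomy-level lens/parabolic condition to a statement about the actual deformed end neighborhood (this is where Benoist's classification of structures with diagonalizable holonomy is used to rule out the transversal end orbifold degenerating); and in (iii) you should not invoke Theorem \ref{thm-relhyp}, since relative hyperbolicity is only available in the strict SPC setting of Theorem \ref{thm-C}, whereas here the ends may be non-hyperbolic admissible groups.

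The genuine gap is in (ii). The principle you rely on --- that a strongly irreducible subgroup of $\PGL(n+1,\bR)$ preserves at most one properly convex open domain, each such domain being the interior of the convex hull of the limit set --- is a cocompact phenomenon (Benoist's uniqueness for divisible convex sets) and is false for the non-cocompact actions at hand. For instance, a Zariski-dense convex-cocompact non-lattice subgroup of $\SO(2,1)\subset\PGL(3,\bR)$ is strongly irreducible yet preserves infinitely many distinct properly convex open domains: the disc, the interior of the convex hull of its limit set, and the interiors of the convex hulls of the limit set together with arbitrary orbits. In the present setting the uniqueness of the domain realizing a given character must instead be extracted from the end hypotheses: under {\rm (IE)}, {\rm (NA)}, the lens/horospherical conditions, and the uniqueness clause $\mathrm{u}$ fixing the p-end vertices and hyperplanes, one shows that $\torb$ is recovered from $h$ and this end data as a specific convex hull; that is a substantive theorem of \cite{convMa}, not a formal consequence of strong irreducibility. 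As written, your injectivity step would fail, and with it the claim that $\hol$ is a homeomorphism rather than merely a local homeomorphism with open and closed image.
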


\begin{theorem} \label{thm-C} 
Let $\mathcal{O}$ be a strict SPC noncompact strongly tame $n$-dimensional orbifold with admissible ends 
and satisfies {\rm (IE)} and {\rm (NA)}. 
Assume $\partial \orb =\emp$. 
Then
\begin{itemize}
\item $\pi_1(\mathcal{O})$ is relatively hyperbolic with respect to its end fundamental groups.
\item The subspace  $\SDef_{{\mathcal E}, \mathrm{u, ce}}(\mathcal{O}) \subset \Def^s_{\mathcal E, \mathrm{u, ce}}(\mathcal{O})$
of {\hyperlink{term-sspc}{strict SPC-structures}} with admissible ends is open. 
\end{itemize}
Suppose further that every finite-index subgroup of $\pi_1(\mathcal{O})$ contains no nontrivial infinite nilpotent normal subgroup.
Then $\hol$ maps the deformation space $\SDef_{{\mathcal E}, \mathrm{u, ce}}(\mathcal{O})$ of 
\hyperlink{term-sspc}{strict SPC-structures} on $\mathcal{O}$ with \hyperlink{term-add}{admissible ends} homeomorphically to 
a union of components of \[\rep_{{\mathcal E}, \mathrm{u, ce}}^{s}(\pi_1(\mathcal{O}), \PGL(n+1, \bR)).\]
\end{theorem}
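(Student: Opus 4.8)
\noindent\textbf{Idea of proof of Theorem~\ref{thm-C}.}
The plan is to view this as the strictly-convex counterpart of Theorem~\ref{thm-B}: the first two assertions reduce to results already available, and the homeomorphism follows from Theorem~\ref{thm-A} together with a closedness (limiting) argument. The relative hyperbolicity is immediate: since $\mathcal{O}$ is a noncompact strongly tame strict SPC-orbifold with admissible (hence generalized admissible) ends satisfying (IE) and (NA) and with $\partial\orb=\emp$, Theorem~\ref{thm-relhyp} applies and gives that $\pi_{1}(\mathcal{O})$ is relatively hyperbolic with respect to the end fundamental groups $\pi_{1}(E_{1}),\dots,\pi_{1}(E_{k})$. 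I would stress from the outset that this is a statement about the fixed pair consisting of $\pi_{1}(\mathcal{O})$ and the collection of peripheral subgroups $\pi_{1}(E_{i})$, hence is unchanged under deformation, and that by Theorem~\ref{thm-converse} it forces \emph{every} SPC-structure on $\mathcal{O}$ with admissible ends satisfying (IE), (NA) to be strictly SPC.

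For openness, I would first apply Theorem~\ref{thm-B} to obtain that $\CDef_{{\mathcal E},\mathrm{u, ce}}(\mathcal{O})$ is open in $\Def^{s}_{\mathcal{E},\mathrm{u, ce}}(\mathcal{O})$, and then show that $\SDef_{{\mathcal E},\mathrm{u, ce}}(\mathcal{O})$ is open in $\CDef_{{\mathcal E},\mathrm{u, ce}}(\mathcal{O})$; transitivity of openness then finishes this bullet. Given $[\mu]\in\SDef_{{\mathcal E},\mathrm{u, ce}}(\mathcal{O})$, Theorem~\ref{thm-B} supplies a path-connected neighborhood $N\subset\CDef_{{\mathcal E},\mathrm{u, ce}}(\mathcal{O})$ of $[\mu]$; every point of $N$ is joined to $\mu$ by a deformation through SPC-structures with generalized admissible ends beginning at a strict SPC-structure with admissible ends, so Theorem~\ref{thm-relhyp1} makes every structure in $N$ strictly SPC. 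That the generalized admissible ends of these nearby structures are in fact admissible (lens-type or horospherical) follows from the end-classification input of \cite{convMa} and \cite{End1}: a generalized lens-type R-end whose ambient convex domain is strictly convex relative to the ends cannot carry a non-smooth bottom. Hence $N\subset\SDef_{{\mathcal E},\mathrm{u, ce}}(\mathcal{O})$. (Equivalently, by the first paragraph $\SDef_{{\mathcal E},\mathrm{u, ce}}(\mathcal{O})$ simply coincides with the set of structures in $\CDef_{{\mathcal E},\mathrm{u, ce}}(\mathcal{O})$ having admissible ends, so only openness of admissibility of ends is needed.)

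For the homeomorphism, I use $\Def^{s}_{\mathcal{E},\mathrm{u, ce}}(\mathcal{O})\subset\Def^{s}_{\mathcal{E},\mathrm{u}}(\mathcal{O})$ and Theorem~\ref{thm-A}, which gives that $\hol$ restricts to a local homeomorphism from the open set $\SDef_{{\mathcal E},\mathrm{u, ce}}(\mathcal{O})$ into $\rep^{s}_{{\mathcal E},\mathrm{u}}(\pi_{1}(\mathcal{O}),\PGL(n+1,\bR))$; since the ends of structures in $\SDef_{{\mathcal E},\mathrm{u, ce}}(\mathcal{O})$ are lens-type or horospherical, the image lands in $\rep^{s}_{{\mathcal E},\mathrm{u, ce}}(\pi_{1}(\mathcal{O}),\PGL(n+1,\bR))$. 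Injectivity is inherited from Theorem~\ref{thm-B}, whose conclusion that $\hol$ is a homeomorphism on the larger set $\CDef_{{\mathcal E},\mathrm{u, ce}}(\mathcal{O})$ includes injectivity there. So $\hol$ restricted to $\SDef_{{\mathcal E},\mathrm{u, ce}}(\mathcal{O})$ is a continuous, open, injective map, hence a homeomorphism onto its (open) image, and it remains to prove the image is closed in $\rep^{s}_{{\mathcal E},\mathrm{u, ce}}(\pi_{1}(\mathcal{O}),\PGL(n+1,\bR))$, which together with openness yields that it is a union of connected components. For closedness I would take $[\rho_{i}]=\hol([\mu_{i}])$ with $[\mu_{i}]\in\SDef_{{\mathcal E},\mathrm{u, ce}}(\mathcal{O})$ converging to $[\rho_{\infty}]$ in $\rep^{s}_{{\mathcal E},\mathrm{u, ce}}(\pi_{1}(\mathcal{O}),\PGL(n+1,\bR))$; each $\mu_{i}$ develops the universal cover onto a properly convex domain $\Omega_{i}$ by Proposition~\ref{prop-projconv}, and after conjugation I would extract a geometric limit $\Omega_{\infty}$ of a subsequence. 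The limit $\Omega_{\infty}$ is convex; it is \emph{properly} convex because otherwise $\rho_{\infty}$ would be virtually reducible by Proposition~\ref{prop-projconv}, contradicting its stability and irreducibility --- and the hypothesis that no finite-index subgroup of $\pi_{1}(\mathcal{O})$ has a nontrivial infinite nilpotent normal subgroup is used here exactly as in Theorem~\ref{thm-sSPC} and Proposition~\ref{prop-Benoist} to eliminate the residual degenerate cases. Controlling the developing maps near the ends by means of the uniqueness conditions (u) and the lens-cone/horospherical conditions (ce), one checks that $\Omega_{\infty}/\rho_{\infty}(\pi_{1}(\mathcal{O}))$ is again a real projective structure on $\mathcal{O}$ with admissible ends, hence an SPC-structure lying in $\CDef_{{\mathcal E},\mathrm{u, ce}}(\mathcal{O})$, and Theorem~\ref{thm-converse} upgrades it to a strict SPC-structure since $\pi_{1}(\mathcal{O})$ is relatively hyperbolic relative to the fixed end groups. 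Thus $[\rho_{\infty}]$ lies in the image.

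The hard part will be the closedness step --- showing that the geometric limit $\Omega_{\infty}$ of the properly convex developing images stays \emph{properly} convex and, more delicately, that the limiting holonomy $\rho_{\infty}$ again produces a structure on $\mathcal{O}$ whose ends are genuine admissible (lens-type or horospherical) ends rather than collapsed or otherwise degenerate ones. This is exactly where the (u)- and (ce)-conditions, the relative hyperbolicity of $\pi_{1}(\mathcal{O})$, and the no-infinite-nilpotent-normal-subgroup hypothesis are all needed, and it is the piece that rests on the detailed end theory of \cite{convMa}; the remaining assertions are essentially formal consequences of Theorems~\ref{thm-A}, \ref{thm-B}, \ref{thm-relhyp}, \ref{thm-converse}, and~\ref{thm-relhyp1}.
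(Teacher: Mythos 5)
Your proposal is correct and follows essentially the same route as the paper: relative hyperbolicity from Theorem \ref{thm-relhyp}, openness from the stability of strictness (Theorems \ref{thm-relhyp}, \ref{thm-converse}, \ref{thm-relhyp1}) combined with the openness of the SPC condition, and the homeomorphism from the local-homeomorphism Theorem \ref{thm-A} plus the closedness argument of Theorem \ref{thm-closed1} via geometric limits of the properly convex developing images, ruling out non-properly-convex limits by stability/irreducibility. The level of detail matches the paper's own sketch, including the correct identification of the delicate points (persistence of admissible end types and proper convexity of the limit domain) that rest on the end-theory references.
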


We prove Theorems \ref{thm-B} and \ref{thm-C} by dividing into the openness result in Section \ref{sub-open} 
and the closedness result in Section \ref{sub-closed}.

\subsection{Openness} \label{sub-open}

We will show the following by proving Theorem \ref{thm-conv2}. 

\begin{theorem}\label{thm-conv} 
Let $\mathcal{O}$ be a noncompact strongly tame real projective $n$-orbifold 
and satisfies {\rm (IE)} and {\rm (NA)}. Assume $\partial \orb =\emp$. 
In $\Def^s_{\mathcal E, \mathrm{u, ce}}(\mathcal{O})$, the subspace  $\CDef_{{\mathcal E}, \mathrm{u, ce}}({\mathcal{O}})$ of 
\hyperlink{term-spc}{SPC-structures} with 
\hyperlink{term-addg}{generalized admissible ends} is open, and 
so is $\SDef_{{\mathcal E}, \mathrm{u, ce}}({\mathcal{O}})$.
\end{theorem}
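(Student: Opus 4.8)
The plan is, first, to pass from structures to holonomy via the Ehresmann--Thurston map, and then to establish \emph{openness of proper convexity} by separately controlling a compact core and the ends, gluing the resulting convex pieces, and upgrading mere convexity to proper convexity using irreducibility. Fix $\mu_{0}\in\CDef_{\mathcal E,\mathrm{u,ce}}(\mathcal O)$; thus $\dev_{0}$ embeds $\torb$ onto a properly convex open domain $\Omega_{0}$, and by Theorem \ref{thm-sSPC} the holonomy $h_{0}$ is stable and irreducible, so $\CDef_{\mathcal E,\mathrm{u,ce}}(\mathcal O)\subset\Def^{s}_{\mathcal E,\mathrm{u,ce}}(\mathcal O)$. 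By Theorem \ref{thm-A} the map $\hol$ is a local homeomorphism from $\Def^{s}_{\mathcal E,\mathrm{u,ce}}(\mathcal O)$ onto a neighborhood of $[h_{0}]$ in $\rep^{s}_{\mathcal E,\mathrm{u,ce}}(\pi_{1}(\mathcal O),\PGL(n+1,\bR))$, so it suffices to show that every projective structure $\mu$ close to $\mu_{0}$ in $\Def^{s}_{\mathcal E,\mathrm{u,ce}}(\mathcal O)$ --- hence with holonomy $h$ near $h_{0}$, still stable, irreducible, satisfying the $\mathrm u$-condition and having \hyperlink{term-addg}{generalized admissible ends} --- is properly convex.

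Second, I would analyze the ends for $h$ near $h_{0}$. By the $\mathrm u$-condition and Lemma \ref{lem:ucont}, each $\cR$-type p-end holonomy still fixes a unique point $v_{\tilde E}(h)$ with a one-dimensional positive eigenspace, varying continuously with $h$, and each $\cT$-type p-end holonomy still admits a unique invariant hyperplane of the required type. Since the end orbifold $\tilde S_{\tilde E}/\pi_{1}(\tilde E)$ is a closed $(n-1)$-orbifold whose holonomy divides a properly convex $(n-1)$-domain and $\pi_{1}(\tilde E)$ is admissible, Benoist's stability theorems (Proposition \ref{prop-Ben2}, Theorem \ref{thm-Benoist}) show the deformed holonomy still divides a properly convex $(n-1)$-domain; feeding this into the end-classification of \cite{End1}, \cite{End2}, \cite{End3} (cf.\ Theorem \ref{thm:niceend} and the proof of Theorem 8.1 of \cite{endclass}) produces for each end a genuine convex p-end neighborhood for $h$ --- a lens-cone, a concave neighborhood, a horoball, or a one-sided lens neighborhood, possibly of a different admissible type than for $h_{0}$ (cusp opening) --- close to that of $\mu_{0}$.

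Third, and this is where the real work lies, I would glue these pieces. Applying the Koszul--Vey Hessian openness criterion (as in Proposition \ref{prop-projconv}) to a compact core $N$ with suitably convex boundary, the deformed structure develops $\tilde N$ to a convex set close to $\dev_{0}(\tilde N)$; combined with the deformed end neighborhoods of the previous step and the fact that the deformed developing map is an immersion, one shows the deformed developing image $\Omega$ is an embedded convex open domain. The delicate point --- the main obstacle --- is to rule out ``folding'' of the developing map and escape to a degenerate configuration at the interface between $N$ and the end neighborhoods; this is exactly where the uniqueness conditions and the full end theory of \cite{convMa} are needed. Once $\Omega$ is known to be convex it is automatically properly convex: the lineality subspace of the cone over a convex-but-not-properly-convex domain would be a proper subspace invariant under the irreducible group $h(\pi_{1}(\mathcal O))$, which is impossible (equivalently, apply Proposition \ref{prop-projconv}). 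Hence $\mu$ is an SPC-structure with generalized admissible ends, i.e.\ $\mu\in\CDef_{\mathcal E,\mathrm{u,ce}}(\mathcal O)$, proving openness of $\CDef_{\mathcal E,\mathrm{u,ce}}(\mathcal O)$.

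Finally, for $\SDef_{\mathcal E,\mathrm{u,ce}}(\mathcal O)$: if this space is nonempty, Theorem \ref{thm-relhyp} gives that $\pi_{1}(\mathcal O)$ is relatively hyperbolic with respect to its end fundamental groups, and then Theorem \ref{thm-converse} (or, more directly, Theorem \ref{thm-relhyp1}) shows that every SPC-structure on $\mathcal O$ with admissible ends is automatically strictly SPC. Since relative hyperbolicity is purely group-theoretic, hence deformation-invariant, strictness is a ``topological'' condition on the SPC locus; combining this with the openness of $\CDef_{\mathcal E,\mathrm{u,ce}}(\mathcal O)$ and the persistence of admissibility (rather than merely generalized admissibility) of the ends under small deformations --- which follows from the end analysis of the second step --- yields openness of $\SDef_{\mathcal E,\mathrm{u,ce}}(\mathcal O)$ in $\Def^{s}_{\mathcal E,\mathrm{u,ce}}(\mathcal O)$.
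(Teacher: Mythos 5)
Your overall strategy tracks the paper's: the paper also proves openness by putting a Koszul--Vinberg Hessian metric on the cone over a compact core, controlling each end neighborhood separately, patching the Hessian metrics, and invoking irreducibility (Proposition \ref{prop-projconv}) to upgrade convexity to proper convexity; and it also derives openness of $\SDef_{{\mathcal E}, \mathrm{u, ce}}(\mathcal{O})$ from the relative-hyperbolicity results (Theorems \ref{thm-relhyp}, \ref{thm-converse}, \ref{thm-relhyp1}) exactly as you do. (The paper actually routes the statement through Theorem \ref{thm-conv2} via \hyperlink{term-fixings}{fixing sections}, but that is a packaging difference, not a mathematical one.)

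There is, however, a concrete gap in your second step. You claim that for each end ``the end orbifold $\tilde S_{\tilde E}/\pi_{1}(\tilde E)$ is a closed $(n-1)$-orbifold whose holonomy divides a properly convex $(n-1)$-domain'' and that Theorem \ref{thm-Benoist} then shows the deformed end holonomy still divides a properly convex domain. This fails precisely in the hardest case. First, Theorem \ref{thm-Benoist} requires trivial virtual center, which an \hyperlink{term-add}{admissible} end group of the form (a finite extension of) $\bZ^{l-1}\times\Gamma_1\times\cdots\times\Gamma_l$ does not have in general, and which a virtually abelian cusp group never has. Second, for a \hyperlink{term-horo}{horospherical} end the transversal orbifold is complete affine (Euclidean), not properly convex, so there is no properly convex division to start from. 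This is exactly the cusp-opening situation the theorem is supposed to cover, and it is where the paper spends most of its effort: when $\Gamma_t|\pi_1(E)$ becomes diagonalizable, the paper appeals to Benoist's classification \cite{BenNil} of projective structures with diagonalizable holonomy, uses the resulting brick decomposition of the transversal structure $\Sigma_{E,t}$, and argues that the brick number is constant in $t$ and must equal one (else $\dev_t$ restricted to a fundamental domain could not degenerate to a complete affine structure as $t\ra 0$), whence $\Sigma_{E,t}$ is properly convex and the end is of lens type. You need this (or an equivalent) argument; the stability theorem you cite does not supply it. The remaining deferrals in your third step (continuity of $\clo(C_{U_t})$ in the Hausdorff metric, the patching of Hessian metrics, and the non-folding of $\dev_t$ at the interface) coincide with what the paper itself defers to \cite{convMa}, so I would not count those against you beyond noting that they are the substance of the proof.
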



We are given a properly real projective orbifold $\mathcal{O}$ with ends $E_1, \dots, E_{e_1}$ of $\cR$-type and 
$E_{e_1+1}, \dots, E_{e_1 + e_2}$ 
of $\cT$-type. Let us choose representative p-ends $\tilde E_1, \dots, \tilde E_{e_1}$ and $\tilde E_{e_1+1}, \dots, \tilde E_{e_1 + e_2}$.
Again, $e_1$ is the number of \hyperlink{term-cRend}{$\cR$-type ends},  and 
$e_2$ the number of \hyperlink{term-cTend}{$\cT$-type ends} of $\orb$.

We define a subspace of 
$\Hom_{\mathcal E}(\pi_1(\mathcal{O}), \PGL(n+1, \bR))$ to be as in Section \ref{sub-semialg}.

Let $\mathcal V$ be an open subset of 
\[\Hom^s_{\mathcal E}(\pi_1(\mathcal{O}), \PGL(n+1,\bR))\] invariant under the conjugation action of $\PGL(n+1, \bR)$
so that the following hold:
\begin{itemize}
\item one can choose a continuous section $s_{\mathcal V}^{(1)}: \mathcal V \ra (\rpn)^{e_1}$
sending a holonomy homomorphism to a common fixed point of $\Gamma_{\tilde E_i}$ for $i = 1, \dots, e_1$ and 
\item  $s_{\mathcal V}^{(1)}$ satisfies 
 \[s_{\mathcal V}^{(1)}(g h(\cdot) g^{-1})  = g \cdot s_{\mathcal V}^{(1)}(h(\cdot)) \hbox{ for } g \in \PGL(n+1, \bR).\] \index{section} 
 \end{itemize} 
 $s_{\mathcal V}^{(1)}$ is said to be a \hypertarget{term-section}{{\em fixed-point section}}.
 In these cases, we say that R-end structures are {\em determined by} $s_{\mathcal V}^{(1)}$.

Again we assume that for the open subset $\mathcal V$ of
\[\Hom^s_{\mathcal E}(\pi_1(\mathcal{O}), \PGL(n+1,\bR))\]
the following hold: 
\begin{itemize}
\item one can choose a continuous section $s_{\mathcal V}^{(2)}: \mathcal V \ra (\rpnn^{n\ast})^{e_2}$
sending a holonomy homomorphism to a common dual fixed point of $\pi_1(\tilde E_i)$ for $i = e_1+1, \dots, e_{1}+e_2$, 
\item $s_{\mathcal V}^{(2)}$ satisfies $s_{\mathcal V}^{(2)}(g h(\cdot) g^{-1}) 
 = (g^*)^{-1}\circ s_{\mathcal V}^{(2)}(h(\cdot))$ for $g \in \PGL(n+1, \bR)$, and
 \end{itemize}
 $s_{\mathcal V}^{(2)}$ is said to be a \hypertarget{term-dsection}{{\em dual fixed-point} section}.
 In this case, we say that T-end structures are {\em determined by} $s_{\mathcal V}^{(2)}$.
 
 We define $s_{\mathcal V}: \mathcal V \ra (\rpnn^n)^{e_1} \times (\rpnn^{n\ast})^{e_2}$
 as $ s_{\mathcal V}^{(1)} \times s_{\mathcal V}^{(2)}$ and call it a \hypertarget{term-fixings}{{\em fixing section}}.

Let $\mathcal V$ and $s_{\mathcal V}: {\mathcal{V}} \ra  (\rpnn^n)^{e_1} \times (\rpnn^{n \ast})^{e_2}$
 be as above.
\begin{itemize}
\item We define $\Def^s_{{\mathcal E}, s_{\mathcal V}, ce}(\mathcal{O})$ to be the subspace of 
$\Def_{{\mathcal E}, s_{\mathcal V}}(\mathcal{O})$ 
of real projective structures with \hyperlink{term-addg}{generalized admissible ends} with end structures determined by $s_{\mathcal V}$,
and stable irreducible holonomy homomorphisms in $\mathcal V$. 

\item We define 
$\CDef_{{\mathcal E}, s_{\mathcal V}, ce}(\mathcal{O})$ to be the subspace consisting of 
\hyperlink{term-spc}{SPC-structures} with \hyperlink{term-addg}{generalized admissible ends}
and holonomy homomorphisms in $\mathcal V$
 in $\Def^{s}_{{\mathcal E}, s_{\mathcal V}, ce}(\mathcal{O})$. 


\item We define 
$\SDef_{{\mathcal E}, s_{\mathcal V}, ce}(\mathcal{O})$ to be the subspace of consisting of \hyperlink{term-sspc}{strict SPC-structures} 
with admissible ends
and holonomy homomorphisms in $\mathcal V$
 in $\Def^{s}_{{\mathcal E}, s_{\mathcal V}, ce}(\mathcal{O})$. 

\end{itemize}

\begin{theorem}\label{thm-conv2} 
Let $\mathcal{O}$ be a noncompact strongly tame real projective $n$-orbifold with \hyperlink{term-addg}{generalized admissible ends}
and satisfies {\rm (IE)} and {\rm (NA)}. Assume $\partial \orb =\emp$. 
Choose an open $\PGL(n+1, \bR)$-conjugation invariant set
 \[\mathcal{V} \subset \Hom^s_{\mathcal E}(\pi_1(\orb), \PGL(n+1, \bR)),\]
and 
a \hyperlink{term-fixings}{fixing section}
$s_{\mathcal{V}}: {\mathcal{V}} \ra (\rpnn^n)^{e_1} \times (\rpnn^{n \ast})^{e_2}$.

Then $\CDef_{{\mathcal E}, s_{\mathcal V}, ce}(\mathcal{O})$ is open 
 in $\Def^s_{\mathcal E, s_{\mathcal V}, ce}(\mathcal{O})$,
 and so is $\SDef_{{\mathcal E}, s_{\mathcal V}, ce}(\mathcal{O})$.
 \end{theorem}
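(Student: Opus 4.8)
The plan is to establish openness by showing that the defining conditions for $\CDef_{{\mathcal E}, s_{\mathcal V}, ce}(\mathcal{O})$ inside $\Def^s_{\mathcal E, s_{\mathcal V}, ce}(\mathcal{O})$ — namely (a) proper convexity of the developed image and (b) the generalized admissibility of every end — are each stable under small deformations. The ambient space $\Def^s_{\mathcal E, s_{\mathcal V}, ce}(\mathcal{O})$ already carries the $C^2$-topology on developing maps (Section \ref{subsub-defspace}), and by Theorem \ref{thm-A} the map $\hol$ is a local homeomorphism onto the corresponding character variety with the end conditions; so small deformations in $\Def$ correspond to small deformations of holonomy, and both points of view will be used. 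First I would fix an element $[\mu]$ of $\CDef_{{\mathcal E}, s_{\mathcal V}, ce}(\mathcal{O})$, so $\torb$ develops to a properly convex domain $\Omega = \dev(\torb)$ with $\pi_1(\orb)$ acting on it, stably and irreducibly by Theorem \ref{thm-sSPC}, with each end either a lens-type or generalized-lens-type R-end or a lens-type T-end or horospherical.

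The first main step is to control the ends under deformation. For each representative p-end $\tilde E$ of $\cR$-type, the holonomy $h(\pi_1(\tilde E))$ is an admissible group (or its generalization) acting on $\tilde S_{\tilde E}$, which is the interior of a strict join of strictly convex factors by Proposition \ref{prop-Ben2}; the uniform middle eigenvalue condition and the lens-cone structure are then governed by the end-classification theorems of \cite{End1}, \cite{End2}, \cite{End3}, and by Theorem \ref{thm:niceend} in the cases of interest. The key point to extract from that machinery is that these conditions — having a lens-cone (resp. generalized lens-cone) p-end neighborhood with the fixed point prescribed by $s_{\mathcal V}^{(1)}$, and the analogous statement for T-ends via $s_{\mathcal V}^{(2)}$ — are \emph{open} in the holonomy of the end group, which is the content of the openness assertions attached to $\Hom_{\mathcal E, \mathbbm{RL}}$ and $\Hom_{\mathcal E, \mathbbm{TL}}$ in Section \ref{sub-semialg} (cf. the proof of Theorem 8.1 of \cite{endclass}). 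Using the fixing section $s_{\mathcal V}$ to pin down the vertices and ideal hyperplanes continuously, I would deduce that for holonomy near $h$ the ends remain generalized admissible, and moreover the lens-cone / horoball end neighborhoods can be chosen to vary continuously with the deformation parameter. This gives a continuous family of p-end neighborhoods covering a fixed ``thick part'' compact core.

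The second main step is to propagate proper convexity from the ends to the whole orbifold. Because $\orb$ is strongly tame, write $\orb = K \cup U$ where $K$ is a compact core and $U$ is the disjoint union of the controlled end neighborhoods from Step 1. Proper convexity of $\Omega$ is an open condition ``near the core'': a standard deformation argument (as in Koszul \cite{Kos}, and used in \cite{convMa} and \cite{dgorb}) shows that a real projective structure which is convex on a neighborhood of a fixed compact piece and whose ends are of the controlled lens/horoball type develops to a convex domain, and proper convexity persists since the lens-cone and horoball neighborhoods supply the needed supporting hyperplanes near $\Bd\torb$ and rule out the affine/reducible degenerations excluded by Proposition \ref{prop-projconv} together with stability (Theorem \ref{thm-sSPC}). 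Concretely, I would take a sequence $[\mu_i] \to [\mu]$ in $\Def^s_{\mathcal E, s_{\mathcal V}, ce}(\mathcal{O})$, pass to developing maps converging in $C^2$ on compacta, use Lemma \ref{lem:bdconv} and the Hilbert-metric continuity \eqref{eqn:HiHa} to take geometric limits of the domains, and show the limit of $\dev(\torb_{\mu_i})$ must be properly convex, hence so is $\dev(\torb_{\mu_i})$ for large $i$; convexity plus the genuine (non-degenerate) lens structure at every end forces proper convexity, giving $[\mu_i] \in \CDef_{{\mathcal E}, s_{\mathcal V}, ce}(\mathcal{O})$. The strict case $\SDef_{{\mathcal E}, s_{\mathcal V}, ce}(\mathcal{O})$ then follows immediately from Theorem \ref{thm-relhyp1}: strictness with respect to admissible ends is preserved under deformation through SPC-structures with generalized admissible ends, so it is automatically an open condition once proper convexity is open.

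The hard part will be Step 1 in the case of \emph{generalized} lens-type R-ends, where the relevant boundary component of the lens is only a topological (not strictly convex, not $C^1$) disk: here one cannot simply appeal to a clean openness statement, and one must invoke the uniform middle eigenvalue condition and the delicate limiting arguments of the end-classification papers to guarantee that a concave p-end neighborhood persists and does not collapse or become non-embedded. The interaction with the fixing section — ensuring the chosen fixed point remains the \emph{unique} one with positive eigenvalues (the ``u'' condition), which is where Lemma \ref{lem:ucont} enters — is the other technical subtlety, but it is a semicontinuity argument on eigenspace dimensions rather than a genuine obstacle.
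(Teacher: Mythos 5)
Your overall architecture (control the ends first, then propagate convexity to the whole orbifold; invoke Theorem \ref{thm-relhyp1} for the strict case) matches the paper's, but the concrete mechanism you offer for the main step --- proper convexity of the perturbed structure --- does not work as stated. You propose to take $[\mu_i]\to[\mu]$, form geometric limits of the sets $\dev_i(\torb)$ via Lemma \ref{lem:bdconv} and \eqref{eqn:HiHa}, observe that the limit is properly convex, and conclude that $\dev_i(\torb)$ is properly convex for large $i$. That inference is backwards: it has the shape of a \emph{closedness} argument, and the final implication fails because Hausdorff convergence of the images to a properly convex set says nothing about convexity of the approximants --- for a nearby non-convex structure $\dev_i$ need not even be injective, its image need not be a convex domain to which Lemma \ref{lem:bdconv} or the Hilbert metric applies, and an element of $\Def^s_{\mathcal E, s_{\mathcal V}, ce}(\mathcal{O})$ carries no a priori convexity. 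Establishing that $\dev_i$ is an embedding onto a properly convex domain is precisely what must be proved. The paper's actual mechanism, which you name (Koszul) but then do not use, is the Koszul--Vinberg Hessian-metric argument: one puts a Hessian metric on the cone $C_{U}$ over each controlled end neighborhood and on the cone over the compact core, shows these vary continuously with the deformation parameter, patches them into a $\Gamma_t$-invariant Hessian metric on the cone over the deformed orbifold, and invokes the Koszul convexity criterion to conclude that the developed image is properly convex.

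There is also a gap in your Step 1. Openness of $\Hom_{\mathcal E, \mathbbm{RL}}$ and $\Hom_{\mathcal E, \mathbbm{TL}}$ is a statement about the \emph{holonomy} of the end group; what the proof needs is that the deformed \emph{end structure} --- the radial foliation and its transversal real projective structure $\Sigma_{E,t}$ --- is again realized by a properly convex lens-cone or horoball neighborhood on which $\dev_t$ is an embedding. The delicate case is a virtually abelian end whose holonomy passes from parabolic (horospherical) to diagonalizable: the paper rules out degeneration of the transversal structure by Benoist's brick decomposition \cite{BenNil}, showing that $\Sigma_{E,t}$ must consist of a single brick (more than one brick cannot limit onto a complete affine structure) and hence stays properly convex, after which the lens condition on the holonomy yields a lens-type end. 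Your proposal flags the generalized-lens case as hard but does not supply this argument, so the ``continuous family of p-end neighborhoods'' asserted in your Step 1 is not yet justified.
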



By Theorems \ref{thm-conv} and \ref{thm-A}, we obtain:
\begin{corollary}\label{cor-conv} 
Let $\mathcal{O}$ be a noncompact strongly tame real projective $n$-orbifold with \hyperlink{term-addg}{generalized  admissible ends} and 
satisfies {\rm (IE)} and {\rm (NA)}. Assume $\partial \orb =\emp$. 
Then \hyperlink{term-hol}{
\[\hol: \CDef_{{\mathcal E}, \mathrm{u, ce}}(\mathcal{O}) \ra \rep^s_{\mathcal E, \mathrm{u, ce}}(\pi_1(\mathcal{O}), \PGL(n+1, \bR))\]} is a local homeomorphism. 

Furthermore, if $\mathcal{O}$ has a strict SPC-structure with \hyperlink{term-add}{admissible ends}, then so is
\[\hol: \SDef_{{\mathcal E}, \mathrm{u, ce}}(\mathcal{O}) \ra \rep^s_{\mathcal E, \mathrm{u, ce}}(\pi_1(\mathcal{O}), \PGL(n+1, \bR)).\]
\end{corollary}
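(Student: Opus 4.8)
The plan is to combine the openness statement of Theorem \ref{thm-conv} with the local-homeomorphism statement of Theorem \ref{thm-A}, and to verify that the end conditions built into the source and target spaces match up correctly. First I would observe that $\CDef_{{\mathcal E}, \mathrm{u, ce}}(\mathcal{O})$ is by definition a subspace of $\Def_{{\mathcal E}, \mathrm{u}}(\mathcal{O})$ consisting of SPC-structures with generalized admissible ends, and since every SPC-structure has stable irreducible holonomy, we have the inclusions
\[
\CDef_{{\mathcal E}, \mathrm{u, ce}}(\mathcal{O}) \subset \Def^s_{{\mathcal E}, \mathrm{u, ce}}(\mathcal{O}) \subset \Def^s_{{\mathcal E}, \mathrm{u}}(\mathcal{O}).
\]
By Theorem \ref{thm-conv}, the first inclusion is an inclusion of an open subspace. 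By Theorem \ref{thm-A}, the map
\[
\hol: \Def^s_{{\mathcal E}, \mathrm{u}}(\mathcal{O}) \ra \rep^s_{{\mathcal E}, \mathrm{u}}(\pi_1(\mathcal{O}), \PGL(n+1,\bR))
\]
is a local homeomorphism. Restricting a local homeomorphism to an open subspace of its domain is again a local homeomorphism onto its image with the subspace topology, so $\hol$ restricted to $\CDef_{{\mathcal E}, \mathrm{u, ce}}(\mathcal{O})$ is a local homeomorphism into $\rep^s_{{\mathcal E}, \mathrm{u}}(\pi_1(\mathcal{O}), \PGL(n+1,\bR))$.

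The remaining point is to check that the image of $\hol$ restricted to $\CDef_{{\mathcal E}, \mathrm{u, ce}}(\mathcal{O})$ actually lands in $\rep^s_{{\mathcal E}, \mathrm{u, ce}}(\pi_1(\mathcal{O}), \PGL(n+1,\bR))$, and that it is a local homeomorphism with respect to the subspace topology of the latter. For the first assertion: if $\mu \in \CDef_{{\mathcal E}, \mathrm{u, ce}}(\mathcal{O})$ then the orbifold is SPC with generalized admissible ends, so each $\cR$-type p-end is horospherical or lens-type and each $\cT$-type p-end is horospherical or of lens-type; unwinding the definitions of $\Hom_{\mathcal E, \mathbbm{par}}$, $\Hom_{\mathcal E, \mathbbm{RL}}$, $\Hom_{\mathcal E, \mathbbm{TL}}$, this is exactly the condition defining $\Hom^s_{{\mathcal E}, \mathrm{u, ce}}$, hence the character lies in $\rep^s_{{\mathcal E}, \mathrm{u, ce}}(\pi_1(\mathcal{O}), \PGL(n+1,\bR))$. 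For the topological compatibility, I would note that $\rep^s_{{\mathcal E}, \mathrm{u, ce}}$ is, as recorded in Section \ref{sub-semialg}, a union of open subsets of strata in $\rep^s_{{\mathcal E}, \mathrm{u}}$, so the subspace topology it inherits from $\rep^s_{{\mathcal E}, \mathrm{u}}$ agrees with its own, and a local homeomorphism into $\rep^s_{{\mathcal E}, \mathrm{u}}$ whose image lies in this subset is a local homeomorphism into the subset.

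For the strict SPC statement: if $\mathcal{O}$ additionally carries a strict SPC-structure with admissible ends, then by Theorem \ref{thm-relhyp1} every nearby SPC-structure with admissible ends remains strictly SPC, so $\SDef_{{\mathcal E}, \mathrm{u, ce}}(\mathcal{O})$ is open in $\CDef_{{\mathcal E}, \mathrm{u, ce}}(\mathcal{O})$ (this is part of the content of Theorem \ref{thm-conv}), and the same restriction-of-a-local-homeomorphism argument applies. I expect the main obstacle to be the bookkeeping of the end-type conditions: one must make sure the ``u'' (uniqueness of the fixed point / fixed hyperplane) and ``ce'' (horospherical-or-lens) constraints on the deformation-space side are in exact correspondence with the semi-algebraic conditions cutting out the target, so that no characters are lost or gained when passing between the two; this is precisely where Remark \ref{rem:redund} (that the lens condition is equivalent to the stated condition) and the definitions of $\Hom_{\mathcal E, \mathbbm{RL}}$ and $\Hom_{\mathcal E, \mathbbm{TL}}$ as open conditions are used. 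The rest is a formal matter of restricting local homeomorphisms to open subspaces.
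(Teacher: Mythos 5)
Your proposal is correct and follows essentially the same route as the paper, which derives Corollary \ref{cor-conv} precisely by combining the openness statement of Theorem \ref{thm-conv} with the Ehresmann--Thurston local homeomorphism of Theorem \ref{thm-A} and restricting to the open subspaces $\CDef_{{\mathcal E}, \mathrm{u, ce}}(\mathcal{O})$ and $\SDef_{{\mathcal E}, \mathrm{u, ce}}(\mathcal{O})$. Your additional bookkeeping that the image lands in $\rep^s_{{\mathcal E}, \mathrm{u, ce}}$ (a union of open subsets of strata of $\rep^s_{{\mathcal E}, \mathrm{u}}$, so the topologies agree) is exactly the point the paper leaves implicit, and it is handled correctly.
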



We just give a heuristic idea for proof of Corollary \ref{cor-conv} for 
$\SDef_{{\mathcal E}, \mathrm{u, ce}}(\mathcal{O}) $ here as in \cite{convMa}. 
We begin by taking a properly convex open cone $C$ in $\bR^{n+1}$ corresponding to $\torb$. 
Let $\orb$ be a properly convex real projective orbifold with \hyperlink{term-addg}{generalized admissible ends}.
Let $\Gamma = h(\pi_{1}(\orb))$  in $\PGL(n+1, \bR)$ be the holonomy group.
Then $\Gamma$ acts on $C$.
There is a Koszul-Vinberg function on $C$. The Hessian will be a $\Gamma$-invariant metric.
Thus, $\orb$ has an induced metric $\mu$ by using sections.
For each p-end $\tilde E$, we obtain a p-end neighborhood $U$. 
We approximate the p-end neighborhood $U$ very close to $\torb$ in the Hausdorff metric $\bdd^{H}$.
Let $C_{U}\subset \bR^{n+1}$ denote the open cone associated with $U$. 
We choose a Koszul-Vinberg Hessian metric on $C_{U}$ approximating that of $C$. 

Let $h_{t}$ be a parameter of representations in the appropriate character variety with $h_{0}=h$. 
Let $\Gamma_{t} = h_{t}(\pi_{1}(\orb))$ where $\Gamma_{0} = \Gamma$. 

By Theorem \ref{thm-A}, a convex real projective structure on $\orb$ with radial or totally geodesic ends 
has the holonomy homomorphism $h_{t}$ and developing maps $\dev_{t}: \torb \ra \rpn$. 
$\dev_{0}$ can be considered as the inclusion $\torb \ra \rpn$. 
$\dev_{t}$ may not be an inclusion in general. 
However, by the \hyperlink{term-addg}{generalized admissibility} of the ends, 
$\dev_{t}|U$ is an imbedding for $0 < t < \eps$ for sufficiently small $\eps > 0$. 
Here, a key point is that $U_{t}$ can be chosen to be properly convex when deforming. 
If $\Gamma_{t}| \pi_{1}(E)$ is not virtually abelian, it cannot have any horospherical representation. 
Thus, by the stability of the lens condition, we have openness for the associated end neighborhood.  
If $ \pi_{1}(E)$ is virtually abelian, $\Gamma_{0}(\pi_{1}(E))$ can be horospherical. 
If $\Gamma_{t}|\pi_{1}(E)$ is horospherical for $t> 0$, then this case is straightforward.
We consider the case when
$\Gamma_{t}|\pi_{1}(E)$, $t> 0$, becomes diagonalizable by our lens condition for holonomy homomorphisms.
By the classification of Benoist \cite{BenNil}  of projective structures with diagonalizable holonomy groups, 
we have brick decompositions for the transversal orbifold structure $\Sigma_{E, t}$ for $E$ and $\dev_{t}$. 
The transversal projective structure on $\Sigma_{E, t}$ has to have only one brick. 
Otherwise, we can find a fundamental domain of a finite cover of $\Sigma_{E, t}$ which under 
$\dev_{t}$ is noninjective. Since during the deformation the brick number doesn't change, 
we obtain a contradiction that $\dev_{t}|F_{t}$ cannot converge to a map with compact image as $t \ra 0$. 
(In other words, a sequence of real projective structures with more than one bricks 
cannot converge to a complete affine  structure.) 
Therefore, $\Sigma_{E, t}$ is properly convex for $\dev_{t}, t > 0$. 
Since the holonomy is for the lens type ones, we will have lens-type ends. 
(See Proposition 6.5 of \cite{convMa}.)
Let $C_{U_{t}} \subset \bR^{n+1}$ denote the convex open cone corresponding to $U_{t} = \dev_{t}(U) \subset \rpn$. 

The affine space $\bR^{n+1}$ is compactified as $\rpnn^{n+1}$. 
We show that $\clo(C_{U_{t}})$ changes in a continuous manner under the Hausdorff metric 
in $\rpnn^{n+1}$ containing $\bR^{n+1}$.
Also, the Hessian metric $\mu$ in the complement of the union of end neighborhoods
varies continuously to Hessian metrics $\mu_{t}$. Then we patch these metrics together to 
obtain a Hessian metric for $\orb$. 
The existence of Hessian metrics and by \hyperlink{term-addg}{generalized admissibility} of ends,
we can show the proper convexity. 
(The author learned that Cooper, Long, and Tillman \cite{CLT4} came up with the similar arguments
in slightly different settings. Also, they use different topology using developing maps of ends neighborhood. 
This makes thing simpler and maybe more clear.) 


\subsection{The closedness of convex real projective structures}\label{sub-closed}

We recall  \[\rep_{\mathcal E}^s(\pi_1(\mathcal{O}), \PGL(n+1, \bR))\] the subspace of stable irreducible characters
of \[\rep_{\mathcal E}(\pi_1(\mathcal{O}), \PGL(n+1, \bR))\]
which is shown to be the union of open subsets of semi-algebraic subsets in Section \ref{sub-semialg}, 
and denote by $\rep_{{\mathcal E}, \mathrm{u, ce}}^s(\pi_1(\mathcal{O}), \PGL(n+1, \bR))$ the subspace of stable irreducible characters
of $\rep_{{\mathcal E}, \mathrm{u, ce}}(\pi_1(\mathcal{O}), \PGL(n+1, \bR))$, an open subset of a semialgebraic set. 

In this section, we will need to discuss $\SI^n$ but only inside a proof.



\begin{theorem} \label{thm-closed1} 
Let $\mathcal{O}$ be a noncompact strongly tame SPC $n$-orbifold with 
\hyperlink{term-addg}{generalized admissible ends} and satisfies {\rm (IE)} and {\rm (NA)}. Assume $\partial \orb =\emp$, and 
that the nilpotent normal subgroups of every finite-index subgroup of $\pi_1(\mathcal{O})$ are trivial.
Then the following hold\,{\rm :} 
\begin{itemize}
\item The deformation space $\CDef_{{\mathcal E}, \mathrm{u, ce}}(\mathcal{O})$ of \hyperlink{term-spc}{SPC-structures} on $\mathcal{O}$ with generalized 
admissible ends maps under \hyperlink{term-hol}{$\hol$}
homeomorphically to a union of components of $\rep_{{\mathcal E}, \mathrm{u, ce}}^s(\pi_1(\mathcal{O}), \PGL(n+1, \bR))$.
\item 
The deformation space $\SDef_{{\mathcal E}, \mathrm{u, ce}}(\mathcal{O})$ of \hyperlink{term-sspc}{strict SPC-structures} on $\mathcal{O}$ with admissible ends maps under $\hol$
homeomorphically to the union of components of  $\rep_{{\mathcal E}, \mathrm{u, ce}}^s(\pi_1(\mathcal{O}), \PGL(n+1, \bR))$.
\end{itemize}
\end{theorem}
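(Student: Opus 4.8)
This is the global Ehresmann--Thurston--Weil statement, and I would assemble it from the pieces already in hand: the local homeomorphism Theorem~\ref{thm-A}, the openness Theorems~\ref{thm-conv}/\ref{thm-conv2}, the rigidity Theorem~\ref{thm-sSPC}, Proposition~\ref{prop-projconv}, Benoist's Proposition~\ref{prop-Benoist} and Theorem~\ref{thm-Benoist}, and the end--classification results of \cite{End1,End2,End3}; when orientation bookkeeping is convenient one passes to $\SI^n$ via Section~\ref{subsec-orp}. First I would record that, by Theorem~\ref{thm-sSPC}, every SPC structure with generalized admissible ends satisfying {\rm (IE)} and {\rm (NA)} has stable, strongly irreducible holonomy, so $\CDef_{{\mathcal E},\mathrm{u,ce}}(\orb)\subset\Def^s_{{\mathcal E},\mathrm{u,ce}}(\orb)$, and by Theorems~\ref{thm-conv}/\ref{thm-conv2} this inclusion is open. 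Restricting the local homeomorphism of Theorem~\ref{thm-A}, $\hol$ is therefore a local homeomorphism of $\CDef_{{\mathcal E},\mathrm{u,ce}}(\orb)$ into $\rep_{{\mathcal E},\mathrm{u,ce}}^{s}(\pi_1(\orb),\PGL(n+1,\bR))$, so its image is open. Injectivity comes from uniqueness of the properly convex development realizing a given holonomy, combined with the ``$\mathrm{u}$'' hypothesis: each p-end holonomy group fixes a \emph{unique} vertex (R-ends) or acts on a \emph{unique} dual hyperplane (T-ends), so the radial foliations and the ideal boundary components are forced by the holonomy, and two structures in $\CDef_{{\mathcal E},\mathrm{u,ce}}(\orb)$ with conjugate holonomy are isotopic as orbifolds with marked ends.

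\textbf{The crux: closedness of the image.} Let $[\mu_i]\in\CDef_{{\mathcal E},\mathrm{u,ce}}(\orb)$ with $\hol([\mu_i])\to[\rho_\infty]$ in $\rep_{{\mathcal E},\mathrm{u,ce}}^{s}(\pi_1(\orb),\PGL(n+1,\bR))$, and pick representatives $\rho_i\to\rho_\infty$ in $\Hom_{{\mathcal E},\mathrm{u,ce}}^{s}$. Writing $\orb=\Omega_i/\Gamma_i$ with $\Omega_i=\dev_i(\torb)$ properly convex and $\Gamma_i=\rho_i(\pi_1(\orb))$, I would normalize so that $\clo(\Omega_i)\to K_\infty$ in the Hausdorff metric $\bdd^H$. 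Since $\rho_\infty$ is irreducible it fixes no hyperplane, so $K_\infty$ has nonempty interior; since $\rho_\infty$ is stable it is not virtually reducible, so by Proposition~\ref{prop-projconv} the set $\Omega_\infty:=K_\infty^o$ is \emph{properly} convex — and here the hypothesis that every finite-index subgroup of $\pi_1(\orb)$ has trivial nilpotent normal subgroup, through Proposition~\ref{prop-Benoist} and Theorem~\ref{thm-Benoist}, is exactly what excludes degeneration of the $\Omega_i$ into a join, a cone over a lower stratum, or a non-properly-convex set. One then checks, as in the closed case, that $\Gamma_\infty$ is discrete and acts properly discontinuously on $\Omega_\infty$ with the $\dev_i$ converging to a developing map onto $\Omega_\infty$; on the compact core $\orb\setminus U$ the Hilbert metrics converge by \eqref{eqn:HiHa}, so $\Omega_\infty/\Gamma_\infty$ carries a convex projective structure restricting to that of $\orb$ on the core. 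Near each p-end $\tilde E$, the limit holonomy $\rho_\infty(\pi_1(\tilde E))$ still satisfies the vertex (resp. hyperplane) condition and the uniqueness (``$\mathrm{u}$'') condition, because $\rho_\infty\in\Hom_{{\mathcal E},\mathrm{u,ce}}^{s}$ by hypothesis; feeding this into the radial/totally geodesic end classification of \cite{End1,End2,End3} (and Theorem~\ref{thm-sSPC} for the ambient group) shows each end of $\Omega_\infty/\Gamma_\infty$ is a lens-type or horospherical R-end, or a lens-type or horospherical T-end. Hence $\Omega_\infty/\Gamma_\infty$ is strongly tame with generalized admissible ends and is diffeomorphic to $\orb$ respecting the end markings, yielding $[\mu_\infty]\in\CDef_{{\mathcal E},\mathrm{u,ce}}(\orb)$ with $\hol([\mu_\infty])=[\rho_\infty]$. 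So the image of $\hol$ is closed.

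\textbf{Conclusion and the strict case.} Being open and closed in $\rep_{{\mathcal E},\mathrm{u,ce}}^{s}(\pi_1(\orb),\PGL(n+1,\bR))$, the image of $\hol$ is a union of connected components; an injective continuous open map onto it is a homeomorphism, proving the first bullet. For the second, Theorems~\ref{thm-relhyp}, \ref{thm-converse} and \ref{thm-relhyp1} identify strictness of an SPC structure with admissible ends with relative hyperbolicity of $\pi_1(\orb)$ relative to its end fundamental groups — a property of the group alone — so $\SDef_{{\mathcal E},\mathrm{u,ce}}(\orb)$ is an open and closed (hence union of components) subset of $\CDef_{{\mathcal E},\mathrm{u,ce}}(\orb)$, and the same limiting argument, now using Theorem~\ref{thm-relhyp1} to keep the limit strict, gives the homeomorphism onto a union of components.

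\textbf{Main obstacle.} The heart is the closedness step: ruling out degeneration of the convex domains $\Omega_i$ — the geometric limit could a priori be non-properly-convex or drop dimension, which is precisely where stability, {\rm (IE)}/{\rm (NA)}, and the triviality of nilpotent normal subgroups enter through Benoist's results — and then, granted a properly convex limit, controlling the noncompact ends so the limit quotient stays homeomorphic to $\orb$ with ends of exactly the prescribed admissible types. This forces the full end classification of \cite{End1,End2,End3} to be used, together with a careful check that the ``$\mathrm{u}$'' and lens conditions (which are only locally closed) survive, which they do because they are imposed in the target space.
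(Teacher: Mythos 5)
Your proposal follows essentially the same route as the paper: openness from Theorems~\ref{thm-A} and \ref{thm-conv}/\ref{thm-conv2} together with injectivity of $\hol$, and closedness by extracting a geometric limit $K_i\to K$ of the closures of the developing images in $\SI^n$, ruling out empty interior and failure of proper convexity via the stability/irreducibility of the limit character, and then identifying the limit quotient with $\orb$ and controlling its ends via the end classification; the strict case is handled by the relative-hyperbolicity characterization of strictness, exactly as in Theorems~\ref{thm-relhyp}--\ref{thm-relhyp1}. One inference in your closedness step should be repaired: you argue that $K_\infty^o$ is properly convex because ``$\rho_\infty$ stable $\Rightarrow$ not virtually reducible'' combined with Proposition~\ref{prop-projconv}, but stability only gives irreducibility of $\rho_\infty$ itself (a stable representation can perfectly well restrict to a reducible one on a finite-index subgroup), so Proposition~\ref{prop-projconv}'s conclusion of \emph{virtual} reducibility does not directly contradict stability. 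The paper's argument is more direct and is the correct fix: if $K$ is not properly convex it contains antipodal points, hence a unique maximal great sphere, which is canonical and therefore invariant under the full limit holonomy group, making $\rho_\infty$ itself reducible and hence not stable. With that substitution your outline matches the paper's proof sketch.
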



%

We will give some general idea behind the proofs here and show only
the closedness. 
We will use the developing maps to $\SI^{n}$.
Given a sequence $\mu_{i}$ of properly convex real projective structures on $\orb$ 
satisfying some boundary conditions, let $h_{i}$ denote the corresponding holonomy homomorphism 
with developing maps $\dev_{i}$. 
Let $K_{i} \subset \SI^{n}$ denote the closure of the images $\dev_{i}(\torb)$.
We may choose a subsequence so that $h_i \ra h$ for a 
representation $h: \pi_1(\orb) \ra \PGL(n+1, \bR)$ 
and $K_i \ra K$ for a compact convex domain $K$. 

If $K_i$ geometric converges to a convex domain $K$ with empty interior, 
then $h$ is reducible since $K$ is contain in a proper subspace. 
Hence, $h$ is not in the target character space. 

Suppose that $K_{i}$ geometrically 
converges to a convex domain $K$ with nonempty interior $K^{o}$ by choosing a subsequence. 
Suppose that $K$ is not properly convex. Then $K$ contain a pair of antipodal points. 
We take the maximal great sphere $\SI^{i}$ in $K$ for $i \geq 0$. 
The limiting holonomy group acts on $\SI^{i}$ and hence is reducible. 
Thus, $h$ is not in the target character subspace. 

Hence, $K$ is properly convex, 
and $K^{o}/\Gamma$ is a properly convex real projective orbifold $\orb'$. 
We can show that $\orb'$ is diffeomorphic to $\orb$. 
By \cite{GM}, we can show that $h_{i}$ converges to a faithful representation $h: \pi_{1}(\orb) \ra \SLpm$. 

\subsection{Nicest cases} 

Theorems \ref{thm:niceend} and \ref{thm-closed1} imply the following: 

\begin{corollary} \label{cor-closed2}
Let $\mathcal{O}$ be a strongly tame SPC $n$-dimensional real projective orbifold with only radial ends
and satisfies {\rm (IE)} and {\rm (NA)}. 
Suppose that each end fundamental group is generated by finite order elements
and is virtually abelian or hyperbolic. 
Assume $\partial \orb =\emp$, and 
that the nilpotent normal subgroups of every finite-index subgroup of $\pi_1(\mathcal{O})$ are trivial.
Then 
\hyperlink{term-hol}{$\hol$} 
maps the deformation space $\CDef_{{\mathcal E}}(\mathcal{O})$ of \hyperlink{term-spc}{SPC-structures} on $\mathcal{O}$ homeomorphically to 
a union of components of 
\[\rep_{{\mathcal E}, \mathrm{u, ce}}^{s}(\pi_1(\mathcal{O}), \PGL(n+1, \bR)).\]  
The same can be said for $\SDef_{{\mathcal E}}(\mathcal{O})$. 
\end{corollary}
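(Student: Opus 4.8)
The plan is to reduce the statement to Theorem~\ref{thm-closed1} by verifying two things: that under the present hypotheses every radial end of an SPC-structure on $\orb$ is automatically \emph{admissible}, and that the extra ``$\mathrm{u}$'' and ``$\mathrm{ce}$'' conditions come for free, so that
\[
  \CDef_{{\mathcal E}}(\orb) = \CDef_{{\mathcal E}, \mathrm{u, ce}}(\orb)
  \hbox{ and }
  \SDef_{{\mathcal E}}(\orb) = \SDef_{{\mathcal E}, \mathrm{u, ce}}(\orb)
\]
as subspaces of $\Def_{\mathcal E}(\orb)$. Once this is in place the corollary is immediate.

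First I would invoke Theorem~\ref{thm:niceend}: its hypotheses hold, since $\orb$ is a strongly tame properly convex real projective orbifold with radial ends (it is SPC) and each end fundamental group is, by assumption, virtually generated by finite order elements and virtually abelian or hyperbolic. Hence the holonomy of \emph{any} structure counted in $\CDef_{\mathcal E}(\orb)$ or $\SDef_{\mathcal E}(\orb)$ lies in $\Hom^s_{\mathcal E, \mathrm{u, ce}}(\pi_1(\orb), \PGL(n+1,\bR))$. In particular, for every such structure: each radial p-end is horospherical or of honest lens-type (this is Lemma~\ref{lem-niceend}, using that the end orbifolds of an SPC-orbifold are convex); the common fixed point of each p-R-end group is unique; and the character is stable and irreducible. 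It remains only to check the admissible-group condition, i.e.\ that $\pi_1(\tilde E)$ acts on $\tilde S_{\tilde E}$ as an admissible group: in the hyperbolic case this is the $l=1$ case with a single strictly convex factor, and in the virtually abelian case Proposition~\ref{prop-Ben2} forces all the join factors of the convex end orbifold to be points (a positive-dimensional factor would support a cocompact action with semisimple Zariski closure and finite virtual center, impossible for a virtually abelian group), so $\pi_1(\tilde E)$ is virtually $\bZ^{n-1}$ acting on the interior of a simplex. Thus every radial end is admissible, hence $\orb$ has admissible (a fortiori generalized admissible) ends, and the structure already lies in $\Def_{\mathcal E, \mathrm{u}}(\orb)$; this gives the displayed set-theoretic equalities, which are homeomorphisms since the right-hand sides carry the subspace topology.

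Finally I would apply Theorem~\ref{thm-closed1}, whose remaining hypotheses are precisely those assumed here: $\orb$ is a noncompact strongly tame SPC $n$-orbifold with generalized admissible ends (now verified), satisfying {\rm (IE)} and {\rm (NA)}, with $\partial\orb=\emp$ and with trivial nilpotent normal subgroups in every finite-index subgroup of $\pi_1(\orb)$. Theorem~\ref{thm-closed1} then says $\hol$ maps $\CDef_{\mathcal E, \mathrm{u, ce}}(\orb)$ homeomorphically onto a union of components of $\rep^s_{\mathcal E, \mathrm{u, ce}}(\pi_1(\orb), \PGL(n+1,\bR))$, and likewise for $\SDef_{\mathcal E, \mathrm{u, ce}}(\orb)$; combined with the identifications above, this is the assertion of the corollary. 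The only step requiring genuine argument beyond citation is the verification that the radial ends are admissible — routine given Lemma~\ref{lem-niceend} and Proposition~\ref{prop-Ben2} — while the substantive content (closedness, via \cite{GM}, and the end classification of \cite{End1}, \cite{End2}, \cite{End3}) is already packaged in Theorems~\ref{thm-closed1} and~\ref{thm:niceend}; so I anticipate no real obstacle in this last reduction.
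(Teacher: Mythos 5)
Your proposal is correct and follows exactly the route the paper takes: the paper derives Corollary~\ref{cor-closed2} directly from Theorems~\ref{thm:niceend} and~\ref{thm-closed1}, with the content of your argument being the (implicit in the paper) verification that the hypotheses force the ends to be admissible and the structures to already lie in the ``$\mathrm{u,ce}$'' subspaces. Your added detail on the admissible-group condition via Proposition~\ref{prop-Ben2} is a reasonable filling-in of what the paper leaves unstated.
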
 

These types of deformations from structures with cusps to ones with lens-type ends are realized in our main examples
as stated in Section \ref{sub:mainresults}.
We need the restrictions on the target space 
since  the convexity of $\orb$ is not preserved under the 
hyperbolic Dehn surgery deformations of Thurston, 
as pointed out by Cooper at ICERM in September 2013.

Strongly tame properly convex Coxeter orbifolds admitting complete hyperbolic structures will 
satisfy the premise. Also,  $2h\underbar{\,\,}1\underbar{\,\,}1$ and the double of the simplex orbifold 
do also.  

For Coxeter orbifolds, this simplifies further. 

\begin{corollary}  \label{cor-closed3}
Let $\mathcal{O}$ be a strongly tame Coxeter $n$-dimensional real projective orbifold, $n \geq 3$, with only radial ends
admitting a complete hyperbolic structure. 
Then $\SDef_{{\mathcal E}, \mathrm{u, ce}}(\mathcal{O})$ is homeomorphic to 
the union of components of 
\[\rep_{{\mathcal E}}^{s}(\pi_1(\mathcal{O}), \PGL(n+1, \bR)).\]  Finally, 
 \[\SDef_{{\mathcal E}, \mathrm{u, ce}}(\mathcal{O}) = \SDef_{{\mathcal E}}(\mathcal{O}).\]
\end{corollary}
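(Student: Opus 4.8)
\smallskip
\noindent\textit{Sketch of a proof.}
The plan is to deduce this from Corollary \ref{cor-closed2} and Theorem \ref{thm:niceend}, once one checks that a complete hyperbolic Coxeter $n$-orbifold with only radial ends satisfies all the standing hypotheses of those results. First I would unpack the two hypotheses geometrically. Via the Klein model (Example \ref{exmp:hyp}) the complete hyperbolic structure exhibits $\orb$ as $B/\Gamma$ for the unit ball $B\subset\rpnn^n$ and a discrete $\Gamma\subset\PO(1,n)$; since $\Bd B$ is $C^1$ and strictly convex and the only ends are cusps, this is a strict SPC-structure on $\orb$ with horospherical, hence admissible, radial ends, so $\SDef_{\mathcal E}(\orb)\neq\emptyset$. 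Because all ends are radial they must be the cusps of a finite-volume structure, so $\Gamma$ is a lattice in $\PO(1,n)$ with $n\geq 3$, hence non-elementary. From this one gets the group-theoretic conditions: (IE) holds since a cusp subgroup has infinite index in a non-elementary lattice; (NA) holds because the cusp groups are virtually $\bZ^{n-1}$ with $n-1\geq 2$ (so they contain $\bZ^2$), distinct cusp groups intersect in finite groups, and the normalizer of an infinite cyclic subgroup of a cusp group lies in that cusp group, all by discreteness of $\Gamma$; and any nilpotent normal subgroup of a finite-index subgroup $\Gamma'\leq\Gamma$ is trivial, since an infinite one would be elementary and normalized by the non-elementary $\Gamma'$, while a finite one would be centralized by a finite-index, hence Zariski-dense, subgroup of $\Gamma'$ and so lie in the trivial center of $\PO(1,n)$. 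Finally, an end orbifold of a Coxeter orbifold is itself a (Euclidean) Coxeter orbifold, so each end fundamental group is virtually generated by finitely many reflections, i.e.\ virtually generated by finite-order elements.

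With these verifications in hand, $\orb$ meets the hypotheses of both Corollary \ref{cor-closed2} and Theorem \ref{thm:niceend}, with no hyperbolic ends at all (all end groups being virtually abelian). I would then invoke Theorem \ref{thm:niceend}: it applies to \emph{every} SPC-structure on $\orb$ with radial ends and forces its holonomy into $\Hom^s_{\mathcal E,\mathrm{u,ce}}(\pi_1(\orb),\PGL(n+1,\bR))$, so every such end is automatically of lens-type or horospherical and the uniqueness condition is automatic. This gives at once the last assertion of the corollary, $\SDef_{\mathcal E,\mathrm{u,ce}}(\orb)=\SDef_{\mathcal E}(\orb)$ (and likewise for $\CDef$). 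Corollary \ref{cor-closed2} then yields that $\hol$ is a homeomorphism from $\SDef_{\mathcal E}(\orb)=\SDef_{\mathcal E,\mathrm{u,ce}}(\orb)$ onto a union $C$ of connected components of $\rep^s_{\mathcal E,\mathrm{u,ce}}(\pi_1(\orb),\PGL(n+1,\bR))$.

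The remaining point is to replace the target $\rep^s_{\mathcal E,\mathrm{u,ce}}$ by $\rep^s_{\mathcal E}$, i.e.\ to show that $C$ is also a union of connected components of the larger space $\rep^s_{\mathcal E}(\pi_1(\orb),\PGL(n+1,\bR))$; for this I would show $C$ is open and closed there. Openness: near a point of $C$ each R-end holonomy acts on a lens-cone and satisfies the uniform middle eigenvalue condition, which is open, so a nearby stable character satisfying the vertex condition still satisfies it and thus lies in $\rep^s_{\mathcal E,\mathrm{u,ce}}$; openness of $C$ in $\rep^s_{\mathcal E}$ then follows from the local-homeomorphism property of $\hol$ (Corollary \ref{cor-conv}). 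Closedness: given $\rho_i\to\rho$ in $\rep^s_{\mathcal E}$ with $\rho_i\in C$, the argument proving Theorem \ref{thm-closed1} applies — using stability and irreducibility of the limit $\rho$ to rule out degeneration of the geometric limit of the developing images in $\SI^n$ — to conclude that $\rho$ is the holonomy of an SPC-structure on $\orb$ with radial ends and quotient diffeomorphic to $\orb$; Theorem \ref{thm:niceend} then puts $\rho$ in $\rep^s_{\mathcal E,\mathrm{u,ce}}$, and Theorem \ref{thm-relhyp1} (or Theorem \ref{thm-converse}) shows the structure is again strictly SPC, so $\rho\in C$. Hence $C$ is clopen in $\rep^s_{\mathcal E}$, a union of components, and $\hol$ restricts to the asserted homeomorphism.

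The hard part is the closedness step of the last paragraph — controlling the geometric limit of the convex developing images and identifying its quotient with $\orb$ — but this is exactly Theorem \ref{thm-closed1}, which we are free to cite. Within the present corollary the only genuinely new content is light bookkeeping: that ``complete hyperbolic Coxeter orbifold, $n\ge 3$, with radial ends'' forces the end fundamental groups to be virtually abelian and virtually generated by reflections, which is precisely what feeds Theorem \ref{thm:niceend} and thereby collapses all the decorated $(\mathrm{u,ce})$ objects onto the undecorated ones.
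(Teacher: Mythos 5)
Your overall architecture --- verify (IE), (NA) and the end-group hypotheses, use Theorem \ref{thm:niceend} to collapse $\SDef_{{\mathcal E}, \mathrm{u, ce}}(\orb)$ onto $\SDef_{{\mathcal E}}(\orb)$, invoke Corollary \ref{cor-closed2}/Theorem \ref{thm-closed1} for the homeomorphism onto components of $\rep^{s}_{{\mathcal E}, \mathrm{u, ce}}$, and then upgrade the target to $\rep^{s}_{{\mathcal E}}$ by a clopen argument --- is the same as the paper's, and your closedness half is handled exactly as the paper does, by deferring to Section \ref{sub-closed}.

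The gap is in your openness step. You assert that ``near a point of $C$ each R-end holonomy acts on a lens-cone and satisfies the uniform middle eigenvalue condition, which is open.'' But the base point of interest is the complete hyperbolic structure, all of whose ends are horospherical, so there is no lens-cone and no middle-eigenvalue condition to propagate; this is precisely the locus where cusp opening occurs. More fundamentally, a character of $\rep^{s}_{{\mathcal E}}$ near $C$ is only known to satisfy the vertex condition --- each end group fixes a point --- and nothing in your argument explains why its end groups should act on a lens-cone or a horoball at all; that is exactly the content of membership in $\rep^{s}_{{\mathcal E}, \mathrm{u, ce}}$ that you are trying to establish. This is the one place where the Coxeter hypothesis does real work beyond supplying finite-order generators: the paper realizes the nearby representation geometrically via Theorem \ref{thm-A}, observes that each end orbifold is a compact projective Coxeter $(n-1)$-orbifold, $n-1 \geq 2$, admitting a Euclidean structure and hence is convex by Vinberg \cite{Vin}, and then applies Lemma \ref{lem-niceend} (whose hypothesis is convexity of the end orbifold $\Sigma_E$, not proper convexity of $\orb$, so it applies before one knows the ambient structure is SPC) to conclude that each end is of lens-type or horospherical. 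You cite the Coxeter structure of the ends only to check ``virtually generated by finite-order elements'' and never deploy Vinberg's convexity theorem where it is actually needed; without it, the passage from target $\rep^{s}_{{\mathcal E}, \mathrm{u, ce}}$ to target $\rep^{s}_{{\mathcal E}}$ does not go through.
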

We give a sketch of the proof. 
Consider a component $C$ of
\[ \rep_{{\mathcal E}, \mathrm{u, ce}}^{s}(\pi_1(\mathcal{O}), \PGL(n+1, \bR))\]
corresponding to a component of $\SDef_{{\mathcal E}, \mathrm{u, ce}}(\mathcal{O})$. 
Let $C'$ be the inverse image of $C$ in 
\[ \Hom_{{\mathcal E}, \mathrm{u, ce}}^{s}(\pi_1(\mathcal{O}), \PGL(n+1, \bR)).\]
Then we claim that $C'$ is open in 
\[ \Hom_{{\mathcal E}}^{s}(\pi_1(\mathcal{O}), \PGL(n+1, \bR)):\]
Let $h: \pi_{1}(\mathcal{O}) \ra \PGL(n+1,\bR)$ be a representation in $C'$. 
By Theorem \ref{thm-A}, there is a neighborhood $J$ of $h$ realized by 
orbifold $\orb_{b}$ diffeomorphic to $\orb$ for each $b \in J$. 
Each end is convex since a compact projective Coxeter $(n-1)$-orbifold, $n-1 \geq 2$, admitting 
a Euclidean structure, is always convex by Vinberg \cite{Vin}. 
By Lemma \ref{lem-niceend}, the end is properly convex of lens-type or is horospherical.
Hence, $h$ is in \[ \Hom_{{\mathcal E}, \mathrm{u, ce}}^{s}(\pi_1(\mathcal{O}), \PGL(n+1, \bR)).\]
The closedness follows as in Section \ref{sub-closed}. 


\bibliographystyle{amsplain}



\bibliography{ABCbib}{}

\providecommand{\bysame}{\leavevmode\hbox to3em{\hrulefill}\thinspace}
\providecommand{\MR}{\relax\ifhmode\unskip\space\fi MR }
\providecommand{\MRhref}[2]{%
  \href{http://www.ams.org/mathscinet-getitem?mr=#1}{#2}
}
\providecommand{\href}[2]{#2}
\begin{thebibliography}{10}

\bibitem{Ballas}
Samuel~A. Ballas, \emph{Deformations of noncompact projective manifolds},
  Algebr. Geom. Topol. \textbf{14} (2014), no.~5, 2595--2625. \MR{3276842}

\bibitem{Ballas2}
\bysame, \emph{Finite volume properly convex deformations of the figure-eight
  knot}, Geom. Dedicata \textbf{178} (2015), 49--73. \MR{3397481}

\bibitem{BCL}
Samuel~A. Ballas, Daryl Cooper, and Arielle Leitner, \emph{A classification of
  generalized cusps on properly convex projective $n$-manifolds}, in
  preparation.

\bibitem{BDL}
Samuel~A. Ballas, Jeffrey Danciger, and Gye-Seon Lee, \emph{Convex projective
  structures on non-hyperbolic three-manifolds}, arXiv:1508.04794.

\bibitem{BenNil}
Yves Benoist, \emph{Nilvari\'et\'es projectives}, Comment. Math. Helv.
  \textbf{69} (1994), no.~3, 447--473. \MR{1289337}

\bibitem{BenTor}
\bysame, \emph{Tores affines}, Crystallographic groups and their
  generalizations ({K}ortrijk, 1999), Contemp. Math., vol. 262, Amer. Math.
  Soc., Providence, RI, 2000, pp.~1--37. \MR{1796124}

\bibitem{Ben0}
\bysame, \emph{Convexes divisibles}, C. R. Acad. Sci. Paris S\'er. I Math.
  \textbf{332} (2001), no.~5, 387--390. \MR{1826621}

\bibitem{Ben1}
\bysame, \emph{Convexes divisibles. {I}}, Algebraic groups and arithmetic, Tata
  Inst. Fund. Res., Mumbai, 2004, pp.~339--374. \MR{2094116}

\bibitem{Ben3}
\bysame, \emph{Convexes divisibles. {III}}, Ann. Sci. \'Ecole Norm. Sup. (4)
  \textbf{38} (2005), no.~5, 793--832. \MR{2195260}

\bibitem{Ben4}
\bysame, \emph{Convexes divisibles. {IV}. {S}tructure du bord en dimension 3},
  Invent. Math. \textbf{164} (2006), no.~2, 249--278. \MR{2218481}

\bibitem{BG04}
N.~Bergeron and T.~Gelander, \emph{A note on local rigidity}, Geom. Dedicata
  \textbf{107} (2004), 111--131. \MR{2110758}

\bibitem{Bowditch}
Brian~H. Bowditch, \emph{A topological characterisation of hyperbolic groups},
  J. Amer. Math. Soc. \textbf{11} (1998), no.~3, 643--667. \MR{1602069}

\bibitem{BH}
Martin~R. Bridson and Andr{\'e} Haefliger, \emph{Metric spaces of non-positive
  curvature}, Grundlehren der Mathematischen Wissenschaften [Fundamental
  Principles of Mathematical Sciences], vol. 319, Springer-Verlag, Berlin,
  1999. \MR{1744486}

\bibitem{Canary}
R.~D. Canary, D.~B.~A. Epstein, and P.~L. Green, \emph{Notes on notes of
  {T}hurston [mr0903850]}, Fundamentals of hyperbolic geometry: selected
  expositions, London Math. Soc. Lecture Note Ser., vol. 328, Cambridge Univ.
  Press, Cambridge, 2006, With a new foreword by Canary, pp.~1--115.
  \MR{2235710}

\bibitem{endclass}
S.~Choi, \emph{A classification of radial and totally geodesic ends of properly
  convex real projective orbifolds}, arXiv:1304.1605.

\bibitem{End1}
\bysame, \emph{A classification of radial or totally geodesic ends of real
  projective orbifolds. {I}: a survey of results}, arXiv:1501.00348, to appear
  in Advanced Studies in Pure Mathematics, Math. Soc. Japan.

\bibitem{End2}
\bysame, \emph{A classification of radial or totally geodesic ends of real
  projective orbifolds. {II}: properly convex ends}, arXiv:1501.00352.

\bibitem{End3}
\bysame, \emph{A classification of radial or totally geodesic ends of real
  projective orbifolds. {III}: nonproperly convex convex ends},
  arXiv:1507.00809.

\bibitem{convMa}
\bysame, \emph{The convex real projective orbifolds with radial or totally
  geodesic ends: The closedness and openness of deformations},
  arXiv:1011.1060.v3.

\bibitem{newbook}
\bysame, \emph{$\mathbb{RP}^n$-orbifolds with ends and their deformation
  spaces}, in preparation as a monograph.

\bibitem{schoimath}
\bysame, \emph{The mathematica file to compute tetrahedral real projective
  orbifolds}, \url{http://mathsci.kaist.ac.kr/~schoi/coefficientsol8.nb}.

\bibitem{cdcr2}
\bysame, \emph{Convex decompositions of real projective surfaces. {II}:
  {A}dmissible decompositions}, J. Differential Geom. \textbf{40} (1994),
  no.~2, 239--283. \MR{1293655}

\bibitem{dgorb}
\bysame, \emph{Geometric structures on orbifolds and holonomy representations},
  Geom. Dedicata \textbf{104} (2004), 161--199. \MR{2043960}

\bibitem{poly}
\bysame, \emph{The deformation spaces of projective structures on 3-dimensional
  {C}oxeter orbifolds}, Geom. Dedicata \textbf{119} (2006), 69--90.
  \MR{2247648}

\bibitem{msj}
\bysame, \emph{Geometric structures on 2-orbifolds: exploration of discrete
  symmetry}, MSJ Memoirs, vol.~27, Mathematical Society of Japan, Tokyo, 2012.
  \MR{2962023}

\bibitem{CG}
S.~Choi and W.~Goldman, \emph{The deformation spaces of convex
  {$\Bbb{RP}^2$}-structures on 2-orbifolds}, Amer. J. Math. \textbf{127}
  (2005), no.~5, 1019--1102. \MR{2170138}

\bibitem{CGLM}
S.~Choi, R.~Greene, G.~Lee, and L.~Marquis, \emph{Projective deformations of
  hyperbolic coxeter 3-orbifolds of finite volume}, in preparation.

\bibitem{CHL}
S.~Choi, C.~Hodgson, and G.~Lee, \emph{Projective deformations of hyperbolic
  {C}oxeter 3-orbifolds}, Geom. Dedicata \textbf{159} (2012), 125--167.
  \MR{2944525}

\bibitem{CLM}
S.~Choi, G.~Lee, and L.~Marquis, \emph{Convex real projective structures on
  manifolds and orbifolds}, arXiv:1605.02548, to appear in the Handbook of
  Group Actions, (L. Ji, A. Papadopoulos, S.-T. Yau, eds.), Higher Education
  Press and International Press.

\bibitem{CLT2}
D.~Cooper, D.~Long, and M.~Thistlethwaite, \emph{Computing varieties of
  representations of hyperbolic 3-manifolds into {${\rm SL}(4,\Bbb R)$}},
  Experiment. Math. \textbf{15} (2006), no.~3, 291--305. \MR{2264468}

\bibitem{CLT1}
\bysame, \emph{Flexing closed hyperbolic manifolds}, Geom. Topol. \textbf{11}
  (2007), 2413--2440. \MR{2372851}

\bibitem{CLT4}
D.~Cooper, D.~Long, and S.~Tillmann, \emph{Deforming convex projective
  manifolds}, arXiv:1511.06206.

\bibitem{CLT3}
\bysame, \emph{On convex projective manifolds and cusps}, Adv. Math.
  \textbf{277} (2015), 181--251. \MR{3336086}

\bibitem{CM}
Micka{\"e}l Crampon and Ludovic Marquis, \emph{Finitude g\'eom\'etrique en
  g\'eom\'etrie de {H}ilbert}, Ann. Inst. Fourier (Grenoble) \textbf{64}
  (2014), no.~6, 2299--2377. \MR{3331168}

\bibitem{Harpe}
Pierre de~la Harpe, \emph{On {H}ilbert's metric for simplices}, Geometric group
  theory, {V}ol.\ 1 ({S}ussex, 1991), London Math. Soc. Lecture Note Ser., vol.
  181, Cambridge Univ. Press, Cambridge, 1993, pp.~97--119. \MR{1238518}

\bibitem{DuSa}
Cornelia Dru{\c{t}}u and Mark Sapir, \emph{Tree-graded spaces and asymptotic
  cones of groups}, Topology \textbf{44} (2005), no.~5, 959--1058, With an
  appendix by Denis Osin and Mark Sapir. \MR{2153979}

\bibitem{Farb}
B.~Farb, \emph{Relatively hyperbolic groups}, Geom. Funct. Anal. \textbf{8}
  (1998), no.~5, 810--840. \MR{1650094}

\bibitem{Goldman3}
W.~Goldman, \emph{Geometric structures on manifolds and varieties of
  representations}, Geometry of group representations ({B}oulder, {CO}, 1987),
  Contemp. Math., vol.~74, Amer. Math. Soc., Providence, RI, 1988,
  pp.~169--198. \MR{957518}

\bibitem{Gconv}
\bysame, \emph{Convex real projective structures on compact surfaces}, J.
  Differential Geom. \textbf{31} (1990), no.~3, 791--845. \MR{1053346}

\bibitem{GM}
W.~Goldman and J.~Millson, \emph{Local rigidity of discrete groups acting on
  complex hyperbolic space}, Invent. Math. \textbf{88} (1987), no.~3, 495--520.
  \MR{884798}

\bibitem{Greene}
R.~Greene, \emph{The deformation theory of discrete reflection groups and
  projective structures}, Ph.D. thesis, Ohio State University, Columbus, Ohio,
  2013.

\bibitem{Gr1}
M.~Gromov, \emph{Groups of polynomial growth and expanding maps}, Inst. Hautes
  \'Etudes Sci. Publ. Math. (1981), no.~53, 53--73. \MR{623534}

\bibitem{Gr2}
\bysame, \emph{Hyperbolic groups}, Essays in group theory, Math. Sci. Res.
  Inst. Publ., vol.~8, Springer, New York, 1987, pp.~75--263. \MR{919829}

\bibitem{heard}
Damian Heard, Craig Hodgson, Bruno Martelli, and Carlo Petronio,
  \emph{Hyperbolic graphs of small complexity}, Experiment. Math. \textbf{19}
  (2010), no.~2, 211--236. \MR{2676749}

\bibitem{JM}
Dennis Johnson and John~J. Millson, \emph{Deformation spaces associated to
  compact hyperbolic manifolds}, Discrete groups in geometry and analysis
  ({N}ew {H}aven, {C}onn., 1984), Progr. Math., vol.~67, Birkh\"auser Boston,
  Boston, MA, 1987, pp.~48--106. \MR{900823}

\bibitem{Kos}
J.-L. Koszul, \emph{D\'eformations de connexions localement plates}, Ann. Inst.
  Fourier (Grenoble) \textbf{18} (1968), no.~fasc. 1, 103--114. \MR{0239529}

\bibitem{Kuiper}
N.~H. Kuiper, \emph{On convex locally-projective spaces}, Convegno
  {I}nternazionale di {G}eometria {D}ifferenziale, {I}talia, 1953, Edizioni
  Cremonese, Roma, 1954, pp.~200--213. \MR{0063115}

\bibitem{Leitner3}
Arielle Leitner, \emph{A classification of subgroups of {$SL(4,\Bbb R)$}
  isomorphic to {$\Bbb R^3$} and generalized cusps in projective 3 manifolds},
  Topology Appl. \textbf{206} (2016), 241--254. \MR{3494445}

\bibitem{Leitner1}
\bysame, \emph{Conjugacy limits of the diagonal {C}artan subgroup in
  {$SL_3(\Bbb{R})$}}, Geom. Dedicata \textbf{180} (2016), 135--149.
  \MR{3451461}

\bibitem{Leitner2}
\bysame, \emph{Limits under conjugacy of the diagonal subgroup in
  {$SL_n(\Bbb{R})$}}, Proc. Amer. Math. Soc. \textbf{144} (2016), no.~8,
  3243--3254. \MR{3503693}

\bibitem{Lok}
W.~Lok, \emph{Deformations of locally homogeneous spaces and kleinian groups},
  Ph.D. thesis, Columbia University, New York, New York, 1984, pp. 1--178.

\bibitem{ludo}
Ludovic Marquis, \emph{Espace des modules de certains poly\`edres projectifs
  miroirs}, Geom. Dedicata \textbf{147} (2010), 47--86. \MR{2660566}

\bibitem{moer}
I.~Moerdijk and D.~A. Pronk, \emph{Orbifolds, sheaves and groupoids},
  $K$-Theory \textbf{12} (1997), no.~1, 3--21. \MR{1466622}

\bibitem{DW2}
Dave~Witte Morris, \emph{Introduction to arithmetic groups}, Deductive Press,
  [place of publication not identified], 2015. \MR{3307755}

\bibitem{Munkres}
James~R. Munkres, \emph{Topology: a first course}, Prentice-Hall, Inc.,
  Englewood Cliffs, N.J., 1975. \MR{0464128}

\bibitem{PTp}
J.~Porti and S.~Tillmann, \emph{Projective structures on a hyperbolic
  3-orbifold}, working title, in preparations, 2016.

\bibitem{Thnote}
W.~Thurston, \emph{Geometry and topology of $3$-manifolds}, mimeographed notes,
  available from \url{http://library.msri.org/books/gt3m/}, 1984.

\bibitem{Thbook}
\bysame, \emph{Three-dimensional geometry and topology. {V}ol. 1}, Princeton
  Mathematical Series, vol.~35, Princeton University Press, Princeton, NJ,
  1997, Edited by Silvio Levy. \MR{1435975}

\bibitem{Var}
V.~S. Varadarajan, \emph{Lie groups, {L}ie algebras, and their
  representations}, Graduate Texts in Mathematics, vol. 102, Springer-Verlag,
  New York, 1984, Reprint of the 1974 edition. \MR{746308}

\bibitem{Vey68}
Jacques Vey, \emph{Une notion d'hyperbolicit\'e sur les vari\'et\'es localement
  plates}, C. R. Acad. Sci. Paris S\'er. A-B \textbf{266} (1968), A622--A624.
  \MR{0236837}

\bibitem{Vin3}
{\`E}.~B. Vinberg, \emph{The theory of homogeneous convex cones}, Trudy Moskov.
  Mat. Ob\v s\v c. \textbf{12} (1963), 303--358. \MR{0158414}

\bibitem{Vin}
\bysame, \emph{Discrete linear groups that are generated by reflections}, Izv.
  Akad. Nauk SSSR Ser. Mat. \textbf{35} (1971), 1072--1112. \MR{0302779}

\bibitem{Weil}
Andr{\'e} Weil, \emph{On discrete subgroups of {L}ie groups}, Ann. of Math. (2)
  \textbf{72} (1960), 369--384. \MR{0137792}

\end{thebibliography}

\end{document}